\pgfplotsset{compat=newest}
\newtheorem{theorem}{Theorem}
\newtheorem{lemma}{Lemma}
\newtheorem{proposition}{Proposition}
\newtheorem{corollary}{Corollary}
\newtheorem{algorithm}{Algorithm}
\theoremstyle{definition}
\newtheorem{definition}{Definition}
\theoremstyle{remark}
\newtheorem{remark}{Remark}
\newcommand{\volm}{\operatorname{Vol}_{M}}
\title[Large data limit of the MBO scheme]{Large data limit of the MBO scheme for data clustering: convergence of the dynamics}
\author{Tim Laux}
\address[Tim Laux]{Institut f\"ur angewandte Mathematik, Universit\"at Bonn, Endenicher Allee 60, 53115 Bonn, Germany}
\email{tim.laux@hcm.uni-bonn.de}
\author{Jona Lelmi}
\address[Jona Lelmi]{Institut f\"ur angewandte Mathematik, Universit\"at Bonn, Endenicher Allee 60, 53115 Bonn, Germany}
\email{lelmi@hcm.uni-bonn.de}
\begin{document}

\maketitle

\begin{abstract}

 We prove that the dynamics of the MBO scheme for data clustering 
 converge to a viscosity solution to mean curvature flow. 
 The main ingredients are (i) a new abstract convergence result based on quantitative estimates for heat operators and (ii) the derivation of these estimates in the setting of random geometric graphs.
 
 To implement the scheme in practice, two important parameters are the number of eigenvalues for computing the heat operator and the step size of the scheme. 
 The results of the current paper give a theoretical justification for the choice of these parameters in relation to sample size and interaction width.

\medskip

\noindent \textbf{Keywords:} Graph MBO, clustering, semi-supervised learning, continuum limits, viscosity solutions.

  \medskip

\noindent \textbf{Mathematical Subject Classification (MSC2020)}:
	35D40 (primary), 
	53Z50, 
	35R01, 
	35R02. 
\end{abstract}

\section{Introduction}\label{sec:intro}
The MBO scheme was originally introduced by Merriman, Bence and Osher \cite{Merriman1992, Merriman1994} as a numerical method to approximate evolution by mean curvature flow. 
More recently, Bertozzi et al.\ adapted the scheme to problems in data science such as data clustering \cite{Gennip2014, Merkurjev2013, Merkurjev2014}. 
Due to its conceptual simplicity, the MBO scheme is an efficient and robust algorithm for such tasks. 
Let us recall how the scheme works in the simple case of two classes, i.e.\ when the goal is to split a dataset $V = \{x_1, . . . , x_n\}$ into two subsets. 
Let $G=(V, W)$ be a weighted graph with vertex set $V$ and weight matrix $W$. 
Let $\Delta$ be a suitable graph Laplacian. 
Assume that $\chi^0: V \to \{0,1\}$ encodes an initial guess for the clustering. 
After choosing a step-size $h>0$ and the number of iterations $N \in \mathbf{N}$ that we want to run, for $0 \le l \le N-1$ define inductively a new clustering $\chi^{l+1}:V\to\{0,1\}$ by performing the following two steps:
\begin{enumerate}
\item \textbf{Diffusion}. For $t >0$ define
\begin{equation}
u^l(t) := e^{-t\Delta}\chi^l.
\end{equation}
\item \textbf{Thresholding}. Update the clustering by setting
\begin{equation}
\left\{ \chi^{l+1} = 1 \right\} = \left\{ u^l(h) \ge \frac{1}{2}\right\}.
\end{equation}
\end{enumerate}
By a result of Esedo\={g}lu and Otto in \cite{Esedoglu2015}, $\chi^{l+1}$ solves
\begin{equation}
\chi^{l+1} \in \underset{u:V \to [0,1]}{\operatorname{argmin}} \left\{ E_G^h(u) - E_G^h(u-\chi^{l})\right\},
\end{equation}
where $E_G^h$ is the thresholding energy on $G$ and is defined for $v: V \to [0, 1]$ as
\begin{equation}
E_G^h(v) := \frac{1}{\sqrt{h}} \langle (1-v), e^{-h\Delta}v \rangle_V,
\end{equation}
with $\langle \cdot, \cdot \rangle_V$ denoting an inner product on $V$  defined so that $\Delta$ becomes self-adjoint. 
In our recent work \cite{LauxLelmiI} we presented the first rigorous study of the large-data limit of the MBO scheme in data clustering.
More precisely, given a sequence of random geometric graphs $G_n = (V_n, W_n)$ -- i.e.\ such that $V_n = \{ X_1, . . . , X_n\}$ for a family $\{X_i\}_{i=1}^{+\infty}$ of iid random points $X_i \in M$, for a $k$-dimensional closed Riemannian submanifold $M \subset \mathbf{R}^d$ -- we studied the $\Gamma$-convergence of the family $\{E_{G_n}^h\}_{n \in \mathbf{N}, h > 0}$. 
When the number of iterations of the MBO scheme is very large, its outcome can be thought of as a local minimizer of the thresholding energy, and thus our $\Gamma$-convergence result says that this will be qualitatively close to a local minimizer of a suitable variational problem in the continuum. 
As the selection of the local minimizer strongly depends on the dynamics of the \textit{gradient descent} followed by the algorithm, the next question is to study the convergence properties of said dynamics. 
This is the content of the present paper: we study the convergence of the dynamics of the MBO scheme in the two-class setting. 
In general -- i.e.\ when the number of classes to cluster into is greater than two -- this is a much harder problem. 
In the two-class setting the task is easier because one can use the comparison principle for mean curvature flow, and thus the viscosity solutions setting. After the first works on viscosity solutions \cite{Crandall1992}, the machinery has proven to be a solid way to develop a theory of weak solutions for many problems satisfying a maximum principle -- and its use is the base for many fundamental contributions in geometric PDEs \cite{Chen1991, Evans1991} numerical analysis \cite{Barles1995, Ishii1999} and, more recently, for new results in theoretical data science \cite{Calder2019, Calder2019a, Bungert2021}.

We will always work with a sequence of weighted geometric graphs $G_n = (V_n, W_n)$, where the vertex sets $V_n$ are defined by $V_n := \{x_i\}_{i=1}^{n}$, where $\{x_i\}_{i=1}^{+\infty}$ is a sequence of points on $M \subset \mathbf{R}^d$ -- a $k$-dimensional closed Riemannian submanifold of $\mathbf{R}^d$ -- and the weight matrix $W_n$ is obtained in the by-now-standard way of weighting the edge between two distinct points with a suitable non-increasing function of the Euclidean distance between them, properly rescaled by a localization parameter $\epsilon_n > 0$, see Section \ref{sec:mainres} for the precise construction. 
In this setting, we study the convergence of the sequence of dynamics of the MBO scheme on these graphs as the data size $n$ goes to infinity.

This paper can be conceptually thought of as divided into two main results: in the first one, Theorem~\ref{thm:conditional_conv}, we work in an abstract setting. 
First, in the MBO scheme, we replace the heat operators on the graphs with abstract operators $S_n: (0, +\infty) \times \mathcal{V}_n \to \mathcal{V}_n$ which are linear in the second variable (here $\mathcal{V}_n$ is the space of real-valued functions defined on the vertex set $V_n$) and we show if the sequence $\{S_n\}_{n \in \mathbf{N}}$ approximates well-enough the heat kernel corresponding to a weighted Laplace--Beltrami operator on the manifold, then we have convergence of the dynamics of the MBO scheme on the graphs to the viscosity solution of mean curvature flow on the manifold. 
The conditions that the operators $\{S_n\}$ have to satisfy are three: \eqref{item:monotonicity} they should satisfy an approximate maximum principle, \eqref{item:estimate} they should approximate the action of the heat kernel on smooth function in a uniform sense, and \eqref{item:approx_one} their action on the constant function $\mathbf{1}$ should be close enough to the constant $\mathbf{1}$. All these properties are made quantitatively precise in Theorem~\ref{thm:conditional_conv}. 

The second main result is Theorem~\ref{thm:estimate_random_graphs} and its Corollary \ref{corollary: conv_geom_graphs}, where we check that \eqref{item:monotonicity}, \eqref{item:estimate} and \eqref{item:approx_one} are satisfied with high probability on random geometric graphs -- i.e.\ when the points $\{x_i\}_{i=1}^{+\infty}$ are sampled independently from a probability measure $\nu = \rho \volm \in \mathcal{P}(M)$, absolutely continuous with respect to the volume form -- and when $S_n$ are chosen to be the heat operators on the graphs or the operators obtained by cutting off frequencies higher than a threshold $K_n$ defined precisely in Item \eqref{eq:h_n_cond} in Theorem~\ref{thm:estimate_random_graphs}. 
Let us stress that the latter result is crucial for applications. 
Indeed, when one implements the MBO scheme on a large dataset, computing the full heat kernel is intractable, and thus one usually works with an approximate version of it obtained by cutting off high frequencies in precisely the way described above. 
Our result gives a solid mathematical justification for this procedure, proving that the scheme converges in the large data limit to the viscosity solution to mean curvature flow provided the frequency cut-off is chosen according to $K_n \geq \left( \log(n) \right)^q$ where $q$ is a suitable positive real number and $n$ is the number of data points. 
We also notice that Theorem~\ref{thm:estimate_random_graphs} gives sufficient conditions on how to choose the length scale $\epsilon_n$ and the time-step size $h_n$ in order to ensure convergence of the scheme. 
In particular, the choice of $h_n$ is not anymore based solely on rules-of-thumb but has theoretical foundations. 
Previously, only a negative result ensuring pinning of the scheme was known~\cite[Theorem 4.2]{Gennip2014}. 
However, we point out that the conditions on $\epsilon_n$ and $h_n$ are only sufficient, but not sharp. 
Indeed, we expect that the convergence of the scheme should hold true whenever $\epsilon_n = o(h_n)$, while our conditions imply that $\epsilon_n = o(h_n^{3/2})$. 
The sharp rate $\epsilon_n = o(h_n)$ was verified in the simple setting of the deterministic two-dimensional regular grid $\mathbf{Z}^2$ in \cite{Misiats2016}, and is based on the explicit expression for the heat kernel on regular grids. 
But an extension to the general setting in which we are working requires a different strategy, see also the discussion in Remark~\ref{rem:comparison_old} to better understand how our result compares to the one in the simple setting of \cite{Misiats2016}.

Let us spend a few words on the strategy of the proofs of Theorem~\ref{thm:conditional_conv} and Theorem~\ref{thm:estimate_random_graphs}. 
For Theorem~\ref{thm:conditional_conv} we follow the general scheme of proof of Barles and Georgelin~\cite{Barles1995}, also used in~\cite{Misiats2016}. 
The authors prove convergence of the classical MBO scheme to a viscosity solution to mean curvature flow in the Euclidean space. 
Given a smooth open set $\Omega \subset M$, the idea is to prove that the upper semicontinuous envelope $u^*$ and the lower semicontinuous envelope $u_*$ of the piecewise constant in time interpolations of outcomes of the MBO scheme (with initial values $\Omega \cap G_n$) as defined in \eqref{eq:subs_def} and \eqref{eq:sups_def} are, respectively, a viscosity subsolution and a viscosity supersolution to mean curvature flow on the manifold. 
After doing that, one has to use the comparison principle  in Theorem~\ref{thm:comparison} to compare $u^*$ and $u_*$ with the unique viscosity solution $u$ to mean curvature flow with initial value $\Omega$ to show that $\operatorname{sign}_*(u) \le u_*$ and $\operatorname{sign}^*(u) \ge u^*$. 
In order to check that $u^*$ and $u_*$ are, respectively, a viscosity subsolution and a viscosity supersolution to mean curvaturue flow we have to adapt the strategy in \cite{Barles1995} to our setting: we need to  carefully identify admissible error terms for the argument of \cite{Barles1995}. 
The estimate in item \eqref{item:estimate} in Theorem~\ref{thm:conditional_conv} plays a central role in this, as well as the extension of the consistency step to weighted manifolds (Theorem~\ref{thm:consist}).  
Finally, to apply the comparison principle in Theorem~\ref{thm:comparison}, it is crucial to show an ordering of the initial values in the sense that $\operatorname{sign}_*(u(0, \cdot)) \le u_*(0, \cdot)$ and $\operatorname{sign}^*(u(0, \cdot)) \ge u^*(0, \cdot)$. 
We verify this in the general case of a weighted manifold by carefully checking that one iteration of the MBO scheme with step size $h$ produces a set whose normal distance from the previous one is of order $h$ (Theorem~\ref{thm:thresholding_one_step}). 
This issue seems to have been overlooked in the literature and we believe that our proof fills an important gap in the previous works, even in the Euclidean setting.

For Theorem~\ref{thm:estimate_random_graphs} we draw inspiration from~\cite{Dunson2021}. 
There, the authors work on a fixed graph with points sampled independently from a weighted manifold and consider the error in a uniform sense between the restriction of the manifold heat kernel to the graph and the operator obtained by considering the first $K$ frequencies of the graph heat kernel. 
Their estimate, however, cannot be applied in our setting because, since we want to take the number of data points to infinity, we have to be able to take the frequency cut-off $K$ to infinity together with them. 
For this reason, a careful interplay between the chosen rates of convergence for $K$, the step size $h$ and the localization parameter $\epsilon$ is needed. In Lemma \ref{lem:heat_kernel_estimate} we obtain a new estimate giving precise conditions on the relation between the frequency cut-off and the number of data points.
To get this, we make use of recent results on convergence of spectra of graph Laplacians~\cite{GarciaTrillos2020, Calder2022, Calder2022a}.

After its introduction in this setting, several authors have developed variants of the MBO scheme. For instance, volume-preserving MBO scheme \cite{Jacobsetal,Jacobs2016} -- a version of the algorithm developed by Jacobs, Merkurjev, and Esedo\={g}lu, where the number of points belonging to each class is invariant through iterations -- and poissonMBO \cite{Calder2020} -- a variant of the scheme for semi-supervised learning at low labeling rates introduced by Calder, Cook, Thorpe and Slep\v{c}ev. When there are just two classes to split the dataset into, we believe that the techniques developed in the present work may be suitably modified to extend the results to these variants of the scheme. One may need to combine our ideas with the techniques developed by Kim and Kwon in \cite{Kim2020}, where the authors develop a viscosity solution approach for volume preserving mean curvature flow in Euclidean space.

\medskip

The rest of the paper is organized as follows: in Section~\ref{sec:thescheme} we introduce some notation and the two versions of the MBO scheme that we study -- the classical one by Bertozzi et al. \cite{Merkurjev2013, Merkurjev2014, Gennip2014}, and a more practical one in which the heat operator in the diffusion step is modified by cutting off high frequencies. In Section~\ref{sec:mainres} we state the main results of the current paper: Theorem~\ref{thm:conditional_conv} gives sufficient conditions for the abstract MBO scheme in Algorithm~\ref{algo_abstract} to converge to a viscosity solution to mean curvature flow on a weighted manifold. In Theorem~\ref{thm:estimate_random_graphs} and its Corollary~\ref{corollary: conv_geom_graphs} we show that these conditions are satisfied for the two versions of the algorithm that we study. In Section~\ref{sec:mcf_drift} we introduce the notion of viscosity solution to mean curvature flow on a weighted manifold by simply extending well-known ideas and results in the literature for mean curvature flow on compact manifolds \cite{Ilmanen1992} and Euclidean spaces \cite{Chen1991, Evans1991, Ambrosio2000}. In Section \ref{sec:mbo_manifolds} we introduce the MBO scheme on a weighted manifold and we state Theorem~\ref{thm:thresholding_one_step}, which says that one iteration of MBO produces a set whose normal distance from the previous one is of order $h$, the chosen step-size. In this section, we also give an extension to weighted manifolds of the consistency step in the work of Barles and Georgelin \cite{Barles1995}. In Section~\ref{sec:proofs} we present the proofs of the results of the paper. In the \hyperlink{sec:appendix}{Appendix}, we collect some results about the behavior of the heat kernel on weighted manifolds and on the asymptotics of the spectra for graph Laplacians which are needed in the proofs.

\medskip

\textbf{Notation}. In the present work, we make extensive use of the Landau symbols $o$, $O$. To explain these, we let $\{a_\omega\}_{\omega \in \Omega}, \{b_{\omega}\}_{\omega \in \Omega}$ be two families of real numbers, with $b_\omega > 0$, indexed by $\omega \in \Omega \subset \mathbf{R}$. Let $\omega_0 \in \mathbf{R} \cup \{-\infty, +\infty\}$ be a limit point for the set $\Omega$, which will be clear from the context. We say that $a_\omega = O(b_\omega)$ if
\begin{equation}
\limsup_{\omega \to \omega_0} \frac{a_\omega}{b_\omega} < +\infty.
\end{equation}
We say that $a_\omega = o(b_\omega)$ if
\begin{equation}
    \lim_{\omega \to \omega_0} \frac{a_\omega}{b_\omega} = 0.
\end{equation}
We also alternatively write $a_{\omega} \lesssim b_\omega$ for $a_\omega = O(b_\omega)$ and $a_{\omega} \ll b_\omega$ for $a_\omega = o(b_\omega)$. In the following, usually $(\Omega, \omega_0)$ will be $(\mathbf{N}, +\infty)$ or $(\mathbf{R}^+, 0)$, and this will be clear from the context.

\section{The MBO scheme on graphs}\label{sec:thescheme}

In this section, we describe the MBO algorithm on graphs originally given by Bertozzi et al.\ in \cite{Merkurjev2013, Gennip2014, Merkurjev2014}. We refer to \cite{LauxLelmiI} for more information about its use in data clustering. We consider a weighted connected graph $G = (V, W)$ with $n$ vertices, with $W_{ii} = 0$ for every $i = 1, . . . , n$. For each vertex $x_i \in V, i \in \{1, . . . , n\}$, we can define
\begin{equation}
d(x_i) = \frac{1}{n}\sum_{j=1}^n w_{ij}.
\end{equation}
We define $D := \operatorname{diag}(d(x_1), . . . , d(x_n))$. We let $\mathcal{V} := \left\{u| u: V \to \mathbf{R}\right\}$, the set of functions defined on $V$, which we endow this with the inner product
\begin{equation}
\langle u, v \rangle_{\mathcal{V}} := \frac{1}{n} \sum_{i=1}^n d(x_i) u(x_i) v(x_i).
\end{equation}
We define the random walk Laplacian $\Delta: \mathcal{V} \to \mathcal{V}$ as
the operator induced by the matrix
\begin{equation}
\Delta := \left( I - \frac{1}{n}D^{-1}W \right).
\end{equation}
One can check that $\Delta$ is non-negative and self-adjoint with respect to $\langle \cdot, \cdot \rangle_{\mathcal{V}}$, in particular, it has $n$ eigenvalues (counted with multiplicity) which we order in the following way
\begin{equation}
0 = \lambda^1 \le . . .  \le \lambda^n.
\end{equation}
We denote by $\{v^l\}_{1 \le l \le n}$ a basis of corresponding eigenvectors, orthonormal with respect to $\langle \cdot, \cdot \rangle_{\mathcal{V}}$. For $0 < K \le n$ we define a kernel $H^K: (0, +\infty) \times V \times V \to \mathbf{R}$ via
\begin{equation}
H^K(t, x, y) := \sum_{l=1}^K e^{-t\lambda^l}v^l(x)v^l(y)\frac{d(y)}{n}.
\end{equation}
The choice $K =  n$ corresponds to the heat kernel associated to $\Delta$, which is the unique function $H: (0, +\infty) \times V \times V \to \mathbf{R}$ with the property that for every $u_0 \in \mathcal{V}$, the function
\begin{equation}
u(t, x) := e^{-t\Delta}u_0(x) := \sum_{y \in V} H(t, x, y)u_0(y),\ x \in V,\ t > 0
\end{equation}
satisfies
\begin{equation}
\begin{cases}
\partial_t u = -\Delta u\ &\text{on}\ (0, +\infty) \times V,
\\ \lim_{t\downarrow 0} u(t, x) = u_0(x)\ &\text{on}\ V.
\end{cases}
\end{equation}
We are now ready to introduce the MBO scheme on graphs. 

\begin{algorithm}[MBO scheme]\label{algo_exact}
Fix a time-step size $h > 0$ and initial conditions $\chi^{0}: V \to \{0,1\}$. For each $l \in \mathbf{N}$ define inductively $\chi^{l+1}: V \to \{0,1\}$ as follows:
\begin{enumerate}
\item \textbf{Diffusion}. Define 
\begin{equation}
u^l := e^{-h\Delta}\chi^l.\nonumber
\end{equation}
\item \textbf{Thresholding}. Define $\chi^{l+1}$ by
\begin{equation}
\left\{ \chi^{l+1} = 1 \right\} = \left\{ u^l \ge \frac{1}{2} \right\}.\nonumber
\end{equation}
\end{enumerate}
\end{algorithm}
We then define the piecewise constant in time, right-continuous interpolation

\begin{equation}\label{eq:definition_of_approx}
\chi^{h, G}(t,x) = \chi^l(x)\ \text{for}\ t \in [lh, (l+1)h)\ \text{and}\ x \in V.
\end{equation}
 We are interested in understanding whether this approximation is consistent at the level of the evolution by mean curvature flow on the manifold. 
 
 In practice, computing the exact diffusion in the first step of the algorithm may be computationally intractable. For this reason, one usually implements the MBO scheme by considering only a smaller number of eigenvectors of the Laplacian, say $K$. In other words, one uses the following more efficient variant of MBO.

\begin{algorithm}[Approximate MBO scheme]\label{algo_approx}
Fix a time-step size $h > 0$ and initial conditions $\chi^{0}: V \to \{0,1\}$. For each $l \in \mathbf{N}$ define inductively $\chi^{l+1}: V \to \{0,1\}$ as follows:
\begin{enumerate}
\item \textbf{Diffusion}. Define 
\begin{equation}
u^l(x) := \sum_{y \in V} H^K(h, x, y)\chi^l(y).
\end{equation}
\item \textbf{Thresholding}. Define $\chi^{l+1}$ by
\begin{equation}
\left\{ \chi^{l+1} = 1 \right\} = \left\{ u^l \ge \frac{1}{2} \right\}.\nonumber
\end{equation}
\end{enumerate}
\end{algorithm}
Again, we then define the piecewise constant in time, right-continuous interpolation

\begin{equation}\label{eq:definition_of_approx_MBOAPP}
\chi^{h, G, K}(t,x) = \chi^l(x)\ \text{for}\ t \in [lh, (l+1)h)\ \text{and}\ x \in V.
\end{equation}
At present, the choice of $h$ and the exact value of $K$ to pick in order to get a good approximation of the MBO scheme is obtained by trial and error. In this work, under the standard \textit{manifold assumption}, we rigorously justify that an admissible regime to get a consistent result in the large-data limit is $K \ge (\log(n))^q$, $h \gg (\log(n))^{-\alpha}$ for some $q, \alpha > 0$ (see Theorem~\ref{thm:estimate_random_graphs} for the precise choices of $q, \alpha$).

\section{Main results}\label{sec:mainres}

Hereafter $M \subset \mathbf{R}^d$ is a $k$-dimensional closed Riemannian submanifold. We denote by $\{x_i\}_{i=1}^{+\infty}$ a sequence of points on $M$, and for each $n \in \mathbf{N}$ we define weighted graphs $G_n = (V_n, W_n)$ where the vertex set $V_n$ is given by $\{x_1, . . . , x_n\}$ and the adjacency matrix $W_n = (w_{ij}^{(n, \epsilon_n)})_{1 \le i,j \le n}$ is given by 

\begin{align*}
&w_{ii}^{(n, \epsilon_n)} = 0\ \text{for}\ 1 \le i \le n, 
\\ &w_{ij}^{(n, \epsilon_n)} = \frac{1}{\epsilon_n^k}\eta\left( \frac{\Vert x_i - x_j \Vert_d}{\epsilon_n} \right)\ \text{for}\ 1 \le i, j \le n,\ i \neq j.
\end{align*}
Here $\epsilon_n > 0$ are given length scales and $\eta: [0, +\infty) \to [0, +\infty)$ is a non-increasing function with support on the interval $[0,1]$, whose restriction to the interval $[0,1]$ is Lipschitz continuous. We define

\begin{align}
C_1 := \int_{\mathbf{R}^k} \eta(|y|_k) dy,\quad C_2 := \int_{\mathbf{R}^k} \eta(|y|_k)y_1^2 dy,\quad \kappa(\eta) := \frac{C_2}{2C_1}.
\end{align}
We also define, for every $x \in M$ and every $n \in \mathbf{N}$

\begin{equation}
d_{n}(x) := \frac{1}{n}\sum_{j=1}^n \frac{1}{\epsilon_n^k}\eta\left( \frac{\Vert x - x_j \Vert_d}{\epsilon_n}\right)\mathbf{1}_{\{x \neq x_j\}}.
\end{equation}
Note that, when $x = x_i$ for some $1 \le i \le n$, then $d_{n}(x)$ is the degree of the $i$-th node. We denote by $D_{n} := \operatorname{diag}(d_{n}(x_1),  . . . , d_{n}(x_n))$ the diagonal matrix of the degrees. The random walk Laplacian $\Delta_n$ is the linear operator induced by the $(n \times n)$-matrix given by

\begin{equation}
\Delta_n := \frac{1}{\epsilon_n^2}\left( I - \frac{1}{n}D_{n}^{-1}W_{n}\right).
\end{equation}
We denote by $\{ v_{n}^{l}\}_{1 \le l \le n}$ an orthonormal basis (with respect to the inner product $\langle \cdot, \cdot \rangle_{\mathcal{V}_n}$) made of eigenvectors for the Laplacian $\Delta_n$ corresponding to the eigenvalues $\{\lambda_{n}^{l}\}_{1\le l \le n}$, which are ordered in the following way

\begin{equation}
0 = \lambda_{n}^{1} \le \lambda_{n}^{2} \le . . .  \le \lambda_{n}^{n}.
\end{equation}
Like in Section \ref{sec:thescheme}, for every $0 < K \le n$ we define

\begin{equation}
H_n^{K}(t, x, y) = \sum_{l=1}^{K} e^{-t\lambda_n^l}v_n^l(x)v_n^l(y)\frac{d_n(y)}{n},
\end{equation}
and we set $H_n = H_n^n$ when $K = n$. Assume that we are given a sequence of operators $S_n: (0,+\infty) \times \mathcal{V}_n \to \mathcal{V}_n$ which are linear in the second variable, we then consider the following abstract version of the MBO scheme on the $n$-th graph.

\begin{algorithm}[Abstract MBO scheme]\label{algo_abstract}
Fix a time-step size $h_n > 0$ and initial conditions $\chi^{0, G_n}: V_n \to \{0,1\}$. For each $l \in \mathbf{N}$ define inductively $\chi^{l+1, G_n}: V_n \to \{0,1\}$ as follows:
\begin{enumerate}
\item \textbf{Diffusion}. Define 
\begin{equation}
u_n^l := S_n(h_n, \chi^{l, G_n}).
\end{equation}
\item \textbf{Thresholding}. Define $\chi^{l+1, G_n}$ by
\begin{equation}
\left\{ \chi^{l+1, G_n} = 1 \right\} = \left\{ u_n^l \ge \frac{1}{2} \right\}.\nonumber
\end{equation}
\end{enumerate}
\end{algorithm}
We then define $\chi^{h_n, G_n}: [0, +\infty) \times V_n \to \{0, 1\}$ by
\begin{equation}
\chi^{h_n, G_n}(t, x) := \chi^{l, G_n}(x),\ x \in V_n,\ t \in [lh_n, (l+1)h_n).
\end{equation}
For  convenience, we will mostly work with the $\{-1, 1\}$-valued functions
\begin{equation}
u^{h_n, G_n}(t, x) := 2\chi^{h_n, G_n}(t, x) - 1.
\end{equation}
We also define the upper and lower limits of the family $\{u^{h_n, G_n}\}_{n \in \mathbf{N}}$ as

\begin{align}
&\begin{aligned}\label{eq:subs_def}
u^*(t, x) := \sup\bigg\{ \limsup_{n \to +\infty} u^{h_n, G_n}(t_n, x_n)\bigg|&\ t_n >0,\ \lim_{n\to +\infty} t_n = t,
\\ &\ x_n \in G_n,\ \lim_{n \to +\infty} x_n = x\bigg\},
\end{aligned}
\\ &\begin{aligned}\label{eq:sups_def}
u_*(t, x) :=\, \inf\,\bigg\{\liminf_{n \to +\infty} u^{h_n, G_n}(t_n, x_n)\,\bigg|&\ t_n >0,\ \lim_{n\to +\infty} t_n = t,
\\ &\ x_n \in G_n,\ \lim_{n \to +\infty} x_n = x\bigg\}.
\end{aligned}
\end{align}
\\
Let $\xi > 0$ be a smooth function on the manifold $M$. Let $\Omega \subset M$ be an open set with smooth boundary $\Gamma_0$. We let $u: [0, +\infty) \times M \to \mathbf{R}$ be the unique viscosity solution of the level set formulation of the mean curvature flow with density $\xi$ (see Section \ref{sec:mcf_drift} for the details) with initial value $sd(\cdot, \Gamma_0) = d_M(x, \Omega^c) - d_M(x, \Omega)$, the signed distance function from $\Gamma_0$. For any $t>0$ we also define 
\begin{equation}\label{eq:notation_evolution_set}
\Omega_t := \left\{ x \in M\, |\ u(t, x) > 0\right\},\ \Gamma_t = \left\{ x \in M|\ u(t,x) = 0\right\}.
\end{equation}
Let us denote by $\Delta_{\xi}$ the weighted Laplacian on $M$ with weight $\mu := \xi \volm$, i.e.,

\begin{equation}
\Delta_{\xi} f = - \frac{1}{\xi}\operatorname{div}\left( \xi \nabla f\right)\quad \text{for}\ f \in C^{\infty}(M).
\end{equation}
Let $H: (0,+\infty) \times M \times M \to \mathbf{R}$ denote the corresponding heat kernel. 

Our first main result is the following conditional convergence of the abstract formulation of the MBO scheme.

\begin{theorem}\label{thm:conditional_conv}
Assume that:
\begin{enumerate}[(i)]
\item The operators $S_n$ satisfy the maximum principle up to errors $h_n^{3/2}$, i.e., for $n$ large enough and for each $u,v \in \mathcal{V}_n$ it holds
\begin{equation}
u \le v \Rightarrow S_n(h_n, u) \le S_n(h_n, v) + \left( \max_{V_n}|u| + \max_{V_n} |v|\right)O(h_n^{3/2}).
\end{equation}\label{item:monotonicity}
\item The operators $S_n$ approximate the heat operator on the manifold, i.e. 
there exists a constant $\kappa>0$ such that for every function $f \in C^{\infty}(M)$ we have

\begin{equation}\label{eq:needed_heat_est}
\max_{ x \in V_n} \left| S(h_n, f)(x) - e^{-h\kappa\Delta_{\xi}}f(x)\right| = (\sup |f| )\,o(\sqrt{h_n}) + \operatorname{Lip}(f)\,O(h_n^{3/2}).
\end{equation}
where the functions $o(\sqrt{h_n}), O(h_n^{3/2})$ are independent of $f$.\label{item:estimate}
\item The operators $S_n$ almost preserve the total mass in the sense that
\begin{equation}
\max_{x \in V_n} |S_n(h_n, \mathbf{1}_{G_n})(x) - 1| = O(h_n^{3/2}).
\end{equation}\label{item:approx_one}
\end{enumerate}
Then $u^*$ and $u_*$ defined in \eqref{eq:subs_def} and, respectively, \eqref{eq:sups_def} satisfy
\begin{align}
u_*(x,t) &= 1\quad \text{if}\ x \in \Omega_t,\label{eq:lowersol}
\\ u^*(x,t) &= -1\, \text{if}\ x \in (\Omega_t \cup \Gamma_t)^c.\label{eq:uppersol}
\end{align}
Here $\Omega_t$ and $\Gamma_t$ are defined as in \eqref{eq:notation_evolution_set}.
\end{theorem}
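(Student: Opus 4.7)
The plan is to follow the Barles--Georgelin scheme outlined in the introduction: show that $u^*$ is a viscosity subsolution and $u_*$ is a viscosity supersolution of the weighted level-set mean curvature flow on $(M, \xi\,\volm)$, verify the ordering of initial data, and conclude via the comparison principle (Theorem~\ref{thm:comparison}). By symmetry I focus on the subsolution property for $u^*$. Suppose $\varphi \in C^\infty$ and $(t_0, x_0)$ with $t_0>0$ is a strict local maximum of $u^* - \varphi$, with $-1 < u^*(t_0, x_0) < 1$. By definition of $u^*$ one can find sequences $t_n = l_n h_n \to t_0$ and $x_n \in V_n$ with $x_n \to x_0$ realizing local discrete maxima of $u^{h_n, G_n} - \varphi$; the task is to turn the thresholding condition of Algorithm~\ref{algo_abstract} at step $l_n+1$ into the subsolution PDE for $\varphi$ at $(t_0, x_0)$.

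The core local analysis has three graph-level inputs. First, hypothesis \eqref{item:monotonicity} lets me sandwich $\chi^{l_n, G_n}$ between smooth functions tailored to $\varphi$ without changing the thresholding conclusion, up to a harmless $O(h_n^{3/2})$ correction. Second, hypothesis \eqref{item:estimate} replaces $S_n(h_n, \cdot)$ by the manifold heat operator $e^{-h_n \kappa \Delta_\xi}$ applied to the smooth approximant, with errors of the form $(\sup|f|)\,o(\sqrt{h_n}) + \operatorname{Lip}(f)\,O(h_n^{3/2})$. Third, I apply the consistency step of Theorem~\ref{thm:consist} to the heat operator acting on a suitable mollification of $\mathbf{1}_{\{\varphi < \varphi(t_0, x_0)\}}$: up to the same orders this expansion yields $1/2$ plus a $\sqrt{h_n}$-term whose coefficient is the weighted mean curvature of the level set of $\varphi$ at $x_0$. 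Hypothesis \eqref{item:approx_one} is used to pass between the $\{0,1\}$- and $\{-1,1\}$-representations so that comparisons with the threshold $1/2$ are stable. Reading off the sign of the $\sqrt{h_n}$-term gives the subsolution inequality for $\varphi$ at $(t_0, x_0)$.

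To apply Theorem~\ref{thm:comparison} I also need the initial ordering $u_*(0, \cdot) \ge \operatorname{sign}_*(sd(\cdot, \Gamma_0))$ and $u^*(0, \cdot) \le \operatorname{sign}^*(sd(\cdot, \Gamma_0))$. Here Theorem~\ref{thm:thresholding_one_step} is essential: one iteration of MBO with step $h$ moves the interface by at most $O(h)$ in the normal direction, so any point strictly inside $\Omega$ retains the label $+1$ throughout a vanishing time window $[0, t_n)$ with $t_n \to 0$, and analogously outside $\overline{\Omega}$. Letting $n \to \infty$ yields the initial ordering. Combining the sub-/supersolution properties, the initial ordering, and the comparison principle applied to the unique viscosity solution $u$ of mean curvature flow with initial datum $sd(\cdot, \Gamma_0)$, I obtain $\operatorname{sign}_*(u) \le u_*$ and $\operatorname{sign}^*(u) \ge u^*$; unpacking these via the definition \eqref{eq:notation_evolution_set} of $\Omega_t$ and $\Gamma_t$ is precisely \eqref{eq:lowersol} and \eqref{eq:uppersol}.

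I expect the main obstacle to be the calibration of the mollification in the consistency step. To feed Theorem~\ref{thm:consist} I need a smooth approximant $f$ of a jump: as its regularization scale shrinks, $\operatorname{Lip}(f)$ blows up while $\sup|f|$ stays bounded. I must choose this scale as a function of $h_n$ so that both the $(\sup|f|)\,o(\sqrt{h_n})$ and the $\operatorname{Lip}(f)\,O(h_n^{3/2})$ contributions are strictly smaller than the leading $\sqrt{h_n}$-coefficient of the consistency expansion, which is exactly what carries the geometric content of the PDE. The two separate scalings in \eqref{eq:needed_heat_est} are designed to enable this balancing; however, executing it rigorously on a weighted manifold -- where the heat-kernel asymptotics of Theorem~\ref{thm:consist} require Fermi coordinates around the interface and must track the drift induced by $\xi$ -- is the most delicate part of the argument, and is what distinguishes the present argument from its Euclidean predecessor \cite{Barles1995}.
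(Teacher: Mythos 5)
Your proposal follows essentially the same route as the paper's proof: the Barles--Georgelin scheme with the sandwich argument enabled by the approximate maximum principle, the consistency step of Theorem~\ref{thm:consist} with the regularization scale calibrated so that both the $(\sup|f|)\,o(\sqrt{h_n})$ and the $\operatorname{Lip}(f)\,O(h_n^{3/2})$ errors are $o(\sqrt{h_n})$ (the paper takes transition width $\delta_j = h_{n_j}^{2/3}$), the initial ordering via Theorem~\ref{thm:thresholding_one_step}, and the comparison principle. Two small corrections: $u^*$ is $\{-1,1\}$-valued, so the non-trivial case is $(t_0,x_0)\in\partial\{u^*=1\}$ rather than $-1<u^*(t_0,x_0)<1$; and the comparison step additionally requires the relabeling Lemma~\ref{lem:relab} (taking an infimum over continuous $\Psi \ge \operatorname{sign}^*$), since neither $u^*$ nor $\operatorname{sign}^*(u)$ is continuous as Theorem~\ref{thm:comparison} demands.
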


\begin{remark}\label{rem:comparison_old}
Let us compare Theorem \ref{thm:conditional_conv} with the work \cite{Misiats2016}, where the authors prove convergence of the dynamics of the graph MBO scheme to a viscosity solution to mean curvature flow in the case of regular, two-dimensional grids. More precisely, they work in the following setting: the manifold $M$ is the standard Euclidean plane $\mathbf{R}^2$, the sequence of graphs $G_n$ are given by $G_n := \epsilon_n \mathbf{Z}^2$ for a sequence of localization parameters $\epsilon_n \downarrow 0$ and for $(i,j), (l,m) \in \mathbf{Z}^2$ one sets

\begin{equation}\label{eq:infinite_matrix}
w_{(i,j),(l,s)}  = \begin{cases}
1\quad &\text{if}\ |i-l| + |m-j| = 1,
\\ 0\quad &\text{otherwise}.
\end{cases}
\end{equation} 
In this way we can define an infinite dimensional weight matrix $W_n$ whose entries are indexed by $\mathbf{Z}^2 \times \mathbf{Z}^2$ and are defined as \eqref{eq:infinite_matrix}. 
To put ourselves in a setting that is precisely the one we are working in we could actually work with $M = \mathbf{T}^2$, the $2$-dimensional torus, and the sequence of graphs $G_n \cap \mathbf{T}^2$, but to keep the discussion simple we prefer to continue this discussion in the precise setting of \cite{Misiats2016}. 
Let $v: \epsilon_n \mathbf{Z}^2 \to \mathbf{R}$ be a function which is zero outside a compact subset of $\mathbf{R}^2$. We denote by $S_n(t, v): [0, +\infty) \times \epsilon_n \mathbf{Z}^2 \to \mathbf{R}$ the solution to the heat equation on $G_n$ with initial value $v$, i.e., $u := S_n(t, v)$ solves

\begin{align}\label{eq:heat_eq_grid}
\begin{cases}
 \begin{aligned} \frac{d}{dt} u (t, (i, j)) = \frac{1}{\epsilon_n^2}\bigg[ &u(t, (i+1, j)) + u(t, (i-1, j)) 
\\ &+ u(t, (i, j+1)) + u(t, (i, j-1)) 
\\ &-4u(t, (i, j)) \bigg]\end{aligned}\quad &\text{for}\ (i,j) \in \epsilon_n\mathbf{Z}^2,
\\ u(0, (i,j)) = v((i, j))\quad &\text{for}\ (i,j) \in \epsilon_n\mathbf{Z}^2.
\end{cases}
\end{align}
In other words, $S_n(\cdot, v)$ is the heat operator on $G_n$ applied to $v$. By using Fourier analysis methods, it can be shown that for every $h > 0$ and every $(x_1, x_2) \in \epsilon_n\mathbf{Z}^2$

\begin{align*}
S_n(h, v)((x_1, x_2)) = \sum_{(i, j) \in \epsilon_n\mathbf{Z}^2} Q_{i-x_1}\left( \frac{2h}{\epsilon_n^2} \right) Q_{j - x_2}\left( \frac{2h}{\epsilon_n^2}\right)v( (i, j)),
\end{align*}
where
\begin{equation}\label{eq:one_dim_fourier}
Q_{l}(\alpha) := \frac{1}{2\pi} \int_{-\pi}^\pi \cos(l\xi) e^{\alpha(\cos(\xi)-1)}d\xi.
\end{equation}
Using the asymptotic expansions \cite[Proposition 3]{Misiats2016} for \eqref{eq:one_dim_fourier} it is not hard to prove that for any smooth, compactly supported function $f \in C^{\infty}_c(\mathbf{R}^2)$

\begin{align}
\sup_{(i, j) \in \epsilon_n\mathbf{Z}^2} \left| S_n(h, f)((i,j)) - G_h^{\mathbf{R}^2} * f((i, j))\right| = & \operatorname{Lip}(f) o(\epsilon_n) + \sup |f| O\left(\frac{\epsilon_n^2}{h}\right)\label{eq:concrete_est}
\\  & +\sup |f| O\left(\frac{\epsilon_n}{\sqrt{h}}\log\left( \frac{\epsilon_n}{\sqrt{h}}\right)\right),
\end{align}
where $G_h^{\mathbf{R}^2}$ denotes the heat kernel in the Euclidean plane at time $h$. In particular, when $\epsilon_n = h_n^{\alpha}$ for $\alpha \ge \frac{3}{2}$, we see that \eqref{eq:concrete_est} implies \eqref{eq:needed_heat_est}. This allows us to use Theorem~\ref{thm:conditional_conv} to recover the results of \cite{Misiats2016} when $\alpha \ge \frac{3}{2}$. Actually, an inspection of the proof of Theorem \ref{thm:conditional_conv} shows that to check that $u^*$ and $u_*$ are, respectively, a viscosity subsolution and a viscosity supersolution to mean curvature flow, the estimate \eqref{eq:needed_heat_est} can be replaced by 
\begin{equation}
\max_{ x \in V_n} \left| S(h_n, f)(x) - e^{-h\kappa\Delta_{\xi}}f(x)\right| = (\sup |f| )\,o(\sqrt{h_n}) + \operatorname{Lip}(f)\,O(h_n^{\gamma}),
\end{equation}
for some $\gamma > 1$. In particular, we see that in the setting of the two-dimensional regular grid this is satisfied whenever $\epsilon_n = h_n^\gamma$. This allows to recover the full parameter range $\gamma > 1$ of \cite{Misiats2016}. We need the slightly sharper assumption $\gamma = \frac{3}{2}$ for checking the initial conditions for $u^*$ and $u_*$.
\end{remark}

\subsection{Results on the MBO scheme and on the approximate MBO scheme}
The MBO scheme as stated in Algorithm~\ref{algo_exact} corresponds to the choices $S_n(t, \cdot) = e^{-t\Delta_n}(\cdot)$, the heat semigroup on the $n$-th graph, which acts on functions $u \in \mathcal{V}_n$ by
\begin{equation}
e^{-t\Delta_{n}}(u)(x) = \sum_{y \in V_n} H_n(t, x, y)u(y).
\end{equation}
Let $0 < K_n \le n$ be a sequence of numbers converging to $+\infty$, then the approximate MBO scheme as stated in Algorithm~\ref{algo_approx} corresponds to the choices $S_n = P_n$, where the operators $P_n$ act on functions $u \in \mathcal{V}_n$ by
\begin{equation}\label{eq:definition_approximate_op}
P_n(t, u)(x) := \sum_{y \in V_n} H_n^{K_n}(t, x, y)u(y).
\end{equation}
Our second main result states that on random geometric graphs the operators $e^{-t\Delta_n}(\cdot)$ and $P_n$ satisfy the assumptions of Theorem~\ref{thm:conditional_conv} with high probability.

\begin{theorem}\label{thm:estimate_random_graphs}
Let us assume that $\nu := \rho\volm$ is a probability measure with a smooth and positive density $\rho$. Assume that the points $\{x_i\}_{i=1}^{+\infty}$ in the above construction are i.i.d. random points sampled from $M$, distributed according to $\nu$. Assume that $q >0, \frac{2}{k} > s>0$ are such that:
\begin{enumerate}[(i)]
\item $q >\frac{1}{\frac{2}{k}-s}$,\label{eq:cond_sq}
\item We have that $\inf_{i \in \mathbf{N}} (\lambda_{i+1} - \lambda_{i}) > 0$.\label{eq:cond_spect_gap}
\item $K_n \ge (\operatorname{log}(n))^q$,\label{eq:K_n_def}
\item $h_n \gg (\log(n))^{-\alpha}$, with $\alpha = -1+\frac{2q}{k} - sq\ge 0$,\label{eq:h_n_cond}
\item $\epsilon_n \ll (\log(n))^{-\beta}$, with $\beta = -\frac{1}{2}+ 4q +\frac{13q}{k} -\frac{sq}{2} \ge 0$,\label{eq:cond_eps}
\item We have
\begin{equation}
\epsilon_n \gtrsim \begin{cases}
\left( \frac{\operatorname{log}(n)}{n}\right)^{\frac{1}{k}}\ &\text{if}\ k \ge 3,
\\ \left( \frac{\operatorname{log}(n)}{n}\right)^{\frac{1}{8}}\ &\text{if}\ k = 2,
\end{cases}
\end{equation}\label{eq:lower_bdd_eps}
\end{enumerate}
Then the operators $e^{-t\Delta_n}(\cdot)$ and $P_n$ satisfy conditions \eqref{item:monotonicity}, \eqref{item:estimate} and \eqref{item:approx_one} in Theorem~\ref{thm:conditional_conv}  (with $\xi = \rho^2$ and $\kappa = \kappa(\eta)$) on $G_n$ with probability greater than
\begin{equation*}
1 - C\epsilon_{n}^{-6k}\operatorname{exp}(-\frac{n\epsilon_n^{k+4}}{C}) - Cn\operatorname{exp}(-\frac{n}{C(\log(n))^{2q}}).
\end{equation*}
\end{theorem}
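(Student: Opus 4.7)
The plan is to verify each of the three conditions \eqref{item:monotonicity}, \eqref{item:estimate}, \eqref{item:approx_one} of Theorem~\ref{thm:conditional_conv} for the exact heat semigroup $S_n = e^{-t\Delta_n}$ and for the truncated operator $S_n = P_n$. The whole argument takes place on a high-probability event on which we have: uniform convergence $d_n(x) \to C_1 \rho(x)$ coming from concentration of kernel density estimators, pointwise consistency $\Delta_n f \to \kappa(\eta) \Delta_\xi f$ on smooth $f$ with $\xi = \rho^2$, and quantitative spectral convergence $\lambda_n^l \to \kappa(\eta)\lambda^l$ together with uniform $L^\infty$ bounds on the eigenvectors $v_n^l$, available from the recent works \cite{GarciaTrillos2020, Calder2022, Calder2022a}. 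The lower bound~\eqref{eq:lower_bdd_eps} on $\epsilon_n$ is what allows those spectral estimates to be invoked, and the two exponential terms in the stated probability reflect the concentration of the degree functions and of the spectrum, respectively.

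The easier conditions come first. The mass-preservation property~\eqref{item:approx_one} is in fact exactly satisfied: $\mathbf{1}$ lies in the kernel of $\Delta_n$, so $e^{-h_n\Delta_n}\mathbf{1} = \mathbf{1}$, and since $v_n^1$ is a multiple of $\mathbf{1}$ and $K_n \geq 1$, also $P_n(h_n,\mathbf{1}) = \mathbf{1}$. The maximum-principle condition~\eqref{item:monotonicity} holds exactly for $e^{-t\Delta_n}$ because $H_n$ is nonnegative. For the truncated operator, applying the condition to $w = v - u \geq 0$ and decomposing $P_n = e^{-h_n\Delta_n} + (P_n - e^{-h_n\Delta_n})$ reduces the task to a uniform operator-norm bound
\[
\|P_n(h_n,\cdot) - e^{-h_n\Delta_n}\|_{\infty \to \infty} = O(h_n^{3/2}),
\]
which is exactly what Lemma~\ref{lem:heat_kernel_estimate} delivers under \eqref{eq:K_n_def} and \eqref{eq:h_n_cond}: using Weyl-type lower bounds $\lambda_n^{K_n} \gtrsim K_n^{2/k}$ together with $L^\infty$ eigenvector bounds, the tail $\sum_{l>K_n} e^{-h_n\lambda_n^l} v_n^l(x)v_n^l(y)d_n(y)/n$ is controlled by $e^{-h_n K_n^{2/k}}$ up to polylogarithmic factors, and the choice $\alpha = -1 + 2q/k - sq$ forces $h_n\lambda_n^{K_n} \gtrsim (\log n)^{1+sq}$, making this decay faster than any power of $h_n$.

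The heart of the proof is condition~\eqref{item:estimate}. For $S_n = e^{-h_n\Delta_n}$ I would use the Duhamel identity
\[
e^{-h_n\Delta_n}f - e^{-h_n\kappa(\eta)\Delta_\xi}f = \int_0^{h_n} e^{-(h_n-s)\Delta_n}\bigl(\kappa(\eta)\Delta_\xi - \Delta_n\bigr) e^{-s\kappa(\eta)\Delta_\xi}f \, ds,
\]
where the bracketed operator acts on the restriction to $V_n$ of the smooth function $e^{-s\kappa(\eta)\Delta_\xi}f$. On our event, pointwise consistency of $\Delta_n$ with $\kappa(\eta)\Delta_\xi$, combined with parabolic smoothing estimates for the continuous semigroup from the Appendix (which bound the $C^2$-norm of $e^{-s\kappa(\eta)\Delta_\xi}f$ in terms of $\sup|f|$ and $\operatorname{Lip}(f)$ with inverse powers of $s$ that remain integrable over $[0,h_n]$), gives a pointwise bound for the integrand involving $\epsilon_n$, $\sup|f|$ and $\operatorname{Lip}(f)$. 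Integration in $s$ converts this into the required bound $(\sup|f|)\,o(\sqrt{h_n}) + \operatorname{Lip}(f)\,O(h_n^{3/2})$ once $\epsilon_n \ll (\log n)^{-\beta}$ for the $\beta$ in~\eqref{eq:cond_eps}. For $S_n = P_n$ we split
\[
P_n(h_n,f) - e^{-h_n\kappa(\eta)\Delta_\xi}f = \bigl(P_n - e^{-h_n\Delta_n}\bigr)f + \bigl(e^{-h_n\Delta_n} - e^{-h_n\kappa(\eta)\Delta_\xi}\bigr)f
\]
and apply the tail estimate to the first summand and the Duhamel bound to the second.

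The main obstacle is the bookkeeping of error terms: the consistency error of $\Delta_n$ introduces $\epsilon_n$-dependent rates, parabolic smoothing contributes inverse powers of $s$ that must remain integrable near zero, the $L^\infty$ bounds on eigenvectors carry their own polylogarithmic losses, and the tail control of $P_n - e^{-h_n\Delta_n}$ has to beat $O(h_n^{3/2})$. The conditions~\eqref{eq:cond_sq}--\eqref{eq:lower_bdd_eps} of the theorem are precisely what makes all these competing requirements simultaneously compatible, and making this compatibility quantitative -- in particular extracting the exponents $\alpha$ and $\beta$ -- is where the bulk of the technical work lies.
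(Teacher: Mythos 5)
Your treatment of conditions \eqref{item:monotonicity} and \eqref{item:approx_one} matches the paper: both hold exactly for $e^{-t\Delta_n}$, and for $P_n$ one reduces to the tail sum $\sum_{l>K_n}e^{-h_n\lambda_n^l}v_n^l(x)v_n^l(y)d_n(y)/n$, controlled via $\lambda_n^{K_n}\gtrsim K_n^{2/k}$, the spectral convergence results and the choice of $\alpha$. (Minor misattribution: this tail bound is \emph{not} what Lemma~\ref{lem:heat_kernel_estimate} states --- that lemma compares $H^{K_n}_{\epsilon_n}$ with the \emph{manifold} heat kernel; the tail estimate is a separate, direct computation, which you do in fact sketch correctly.)

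For condition \eqref{item:estimate}, however, you take a genuinely different route from the paper, and your route has a gap as written. The paper proves a uniform kernel-level comparison $\max_{x,y}|H^n_{\epsilon_n}(h_n,x,y)-\tfrac{\rho(y)}{n}H(\kappa(\eta)h_n,x,y)|=o(\sqrt{h_n}/n)$ via the spectral decomposition (Lemma~\ref{lem:heat_kernel_estimate} plus the tail), and then replaces the empirical sum $\sum_{y\in V_n}\tfrac{\rho(y)}{n}H(\cdot)f(y)$ by the integral $\int_M H(\cdot)f\rho^2\,d\volm$ using an $\infty$-optimal transport map $T_n$, Gaussian gradient bounds on $H$, and the rate $\theta_n\lesssim(\log n/n)^{1/k}$ from Theorem~\ref{thm:transp_plan_existence}; the lower bound \eqref{eq:lower_bdd_eps} on $\epsilon_n$ is used precisely to show $\theta_n/h_n^{3/2}$ stays bounded. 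Your Duhamel argument bypasses both steps, but the assertion that the parabolic smoothing singularities ``remain integrable over $[0,h_n]$'' is where it breaks. The pointwise consistency error of $\Delta_n$ on a smooth function $g$ is of order $\epsilon_n\Vert g\Vert_{C^3}$ (plus a sampling fluctuation), and $\Vert e^{-s\kappa\Delta_\xi}f\Vert_{C^3}\gtrsim s^{-1}\operatorname{Lip}(f)$ as $s\downarrow 0$, so the integrand in your Duhamel formula behaves like $\epsilon_n s^{-1}$ near $s=0$ and the integral diverges. Repairing this requires splitting the time integral, using the crude bound $\Vert\Delta_n\Vert_{\infty\to\infty}\lesssim\epsilon_n^{-2}$ near $s=0$, and then absorbing the resulting logarithmic loss into the polylogarithmic hierarchy of \eqref{eq:h_n_cond}--\eqref{eq:cond_eps} --- none of which is automatic. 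A second unaddressed point is uniformity: condition \eqref{item:estimate} demands a single event on which the estimate holds for \emph{every} $f\in C^\infty(M)$ with error functions independent of $f$ and depending only linearly on $\sup|f|$ and $\operatorname{Lip}(f)$; your argument invokes pointwise consistency of $\Delta_n$ along the continuum family $\{e^{-s\kappa\Delta_\xi}f\}_{s\le h_n}$ and over all $f$, which is exactly the kind of uniformity the paper's kernel-level estimate is designed to deliver for free. As it stands, the heart of the proof is not complete.
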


\begin{remark}\label{rem:remark_after_estimate}
Let us comment on this second result.
\begin{enumerate}[(i)]
\item For each $k \ge 2$, the space of admissible parameters $(s, q)$ in Theorem~\ref{thm:estimate_random_graphs} is quite \emph{large}. To see this, we plot the space of admissible parameters. The shaded region represents the space of admissible pairs $(s, q)$.
\begin{center}
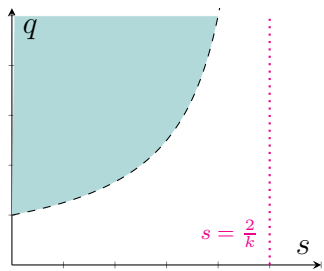

\begin{tikzpicture}[scale=0.6]
\begin{axis}[
    axis lines = middle,
    xlabel = {$s$},
    ylabel = {$q$},
        yticklabels={,,},
    xticklabels={,,},
    xmin=0, xmax=0.60,
    ymin=0, ymax=10.3]
 
\addplot [name path = A,
    domain = 0:0.49,
    samples = 1000,
    dashed] {1/(0.5-x)} 
    node [very near start, right] {\tiny $f(s) = \frac{1}{\frac{2}{k}-s}$};
 
\addplot [name path = B,
    domain = 0.005:0.9,
    color = white] {10} 
    node [pos=1, above] {};
 
\addplot [teal!30] fill between [of = A and B, soft clip={domain=0:0.4}];
 \addplot[thick, samples=50, smooth,domain=0:2,magenta, dotted] coordinates {(0.5,0)(0.5, 10)} node [very near start, left] {\tiny $s=\frac{2}{k}$};

\end{axis}
\end{tikzpicture}
\captionof{figure}{\small{Parameter space.}}\label{fig:third}
\end{center}
\medskip

\item Condition \eqref{eq:cond_spect_gap} in Theorem~\ref{thm:estimate_random_graphs} concerns the geometry of the manifold $M$. It implies in particular that the eigenvalues of the Laplacian $\Delta_{\rho^2}$ are simple. Condition \eqref{eq:cond_spect_gap} in Theorem~\ref{thm:estimate_random_graphs} is for example satisfied by the $k$-torus and by the $k$-sphere with standard unit density, see \cite[Chapter II, Section 2]{Chavel1984} and \cite[Chapter II, Section~4]{Chavel1984}.
\item Let us observe that conditions \eqref{eq:cond_eps} and \eqref{eq:lower_bdd_eps} in Theorem~\ref{thm:estimate_random_graphs} are compatible, indeed the right-hand side of \eqref{eq:cond_eps} in Theorem~\ref{thm:estimate_random_graphs} is a rational function of $\operatorname{log}(n)$, while the lower bound in condition \eqref{eq:lower_bdd_eps} in Theorem~\ref{thm:estimate_random_graphs} converges to zero as a power of $n$, up to a logarithmic factor. We also remark that items \eqref{eq:h_n_cond} and \eqref{eq:cond_eps} of Theorem~\ref{thm:estimate_random_graphs} imply
\begin{equation}
{\epsilon_n} \lesssim {h_n^{3/2}},
\end{equation}
while we expect that the convergence of the scheme should be true up to the critical scaling
\begin{equation}
{\epsilon_n} \ll {h_n}.
\end{equation}
Observe furthermore that condition \eqref{eq:h_n_cond} in Theorem \ref{thm:estimate_random_graphs} gives a lower bound for $h_n$ of the form
\begin{equation}
    h_n \gg ( \log(\delta_n) )^\alpha,
\end{equation}
where $\delta_n = (\frac{1}{n})^{1/k}$ is the characteristic distance between the nodes of the graph. This is perhaps not too surprising because the diffusion needs some time to smear out the fine details in the graph that appear at its characteristic length scale. A similar condition already appeared in \cite{Dunson2021}.
\item In the proof of Theorem \ref{thm:estimate_random_graphs} we will assume, for simplicity, that $K_n = \log(n)^q \in \mathbf{N}$. In this setting we will use condition \eqref{eq:cond_eps} of Theorem~\ref{thm:estimate_random_graphs} in the form
\begin{equation}\label{eq:how_we_use_it}
\epsilon_n \ll \frac{\sqrt{\log(n)}}{K_n^{1+\frac{1}{k} - \frac{s}{2}}\left( \lambda_{K_n}^{\frac{2}{k}+1} + 1\right)^2 \left(\lambda_{K_n}^{4+\frac{k}{2}} + 1 \right)}.
\end{equation}
Observe that condition \eqref{eq:cond_eps} of Theorem~\ref{thm:estimate_random_graphs} implies \eqref{eq:how_we_use_it} because by Weyl's law we have $\lambda_{K_n} \sim K_n^{2/k}$.
\end{enumerate}
\end{remark}

\begin{corollary}\label{corollary: conv_geom_graphs}
In the setting of Theorem~\ref{thm:estimate_random_graphs}, if we additionally assume that
\begin{equation}\label{eq:lower_bound_eps_as}
\epsilon_n \gg \left(\frac{\operatorname{log}(n)}{n}\right)^{\frac{1}{k+4}},
\end{equation}
then the conclusion of Theorem~\ref{thm:conditional_conv} holds almost surely both for the MBO scheme, Algorithm \ref{algo_exact}, and the approximate MBO scheme, Algorithm \ref{algo_approx}.
\end{corollary}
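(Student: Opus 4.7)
The plan is a direct application of the Borel--Cantelli lemma to lift Theorem~\ref{thm:estimate_random_graphs} from a ``with high probability'' to an almost-sure statement. For each $n$, let $A_n$ denote the event that all three conditions \eqref{item:monotonicity}, \eqref{item:estimate}, \eqref{item:approx_one} of Theorem~\ref{thm:conditional_conv} hold on the graph $G_n$, with $\xi = \rho^2$ and $\kappa = \kappa(\eta)$, simultaneously for $S_n = e^{-t\Delta_n}$ and for $S_n = P_n$. Theorem~\ref{thm:estimate_random_graphs} then gives
\begin{equation*}
\mathbf{P}(A_n^c) \le C\epsilon_n^{-6k}\exp\bigl(-n\epsilon_n^{k+4}/C\bigr) + Cn\exp\bigl(-n/(C(\log n)^{2q})\bigr),
\end{equation*}
and the only substantive step is to show $\sum_{n} \mathbf{P}(A_n^c) < +\infty$.

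For the second term this is immediate: since $n/(\log n)^{2q}$ grows faster than any power of $\log n$, we have $n\exp(-n/(C(\log n)^{2q})) \lesssim n^{-2}$ eventually. For the first term, the new assumption \eqref{eq:lower_bound_eps_as} says $\epsilon_n \gg (\log n / n)^{1/(k+4)}$, so for every $M > 0$ one has $n\epsilon_n^{k+4} \ge M \log n$ for all sufficiently large $n$, and hence $\exp(-n\epsilon_n^{k+4}/C) \le n^{-M/C}$. On the other hand, \eqref{eq:lower_bound_eps_as} also yields $\epsilon_n^{-6k} \lesssim (n/\log n)^{6k/(k+4)}$, so choosing $M$ large enough compared to $6k/(k+4)$ makes the first term dominated by $n^{-2}$ as well. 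Summability follows.

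By Borel--Cantelli, almost surely there exists $n_0 = n_0(\omega)$ such that for every $n \ge n_0$ the hypotheses of Theorem~\ref{thm:conditional_conv} are verified on $G_n$, both for $S_n = e^{-t\Delta_n}$ and for $S_n = P_n$. Since the definitions \eqref{eq:subs_def}--\eqref{eq:sups_def} of $u^*$ and $u_*$ depend only on the tail behaviour of the sequence $\{\chi^{h_n,G_n}\}$, and condition \eqref{item:monotonicity} in Theorem~\ref{thm:conditional_conv} is itself already phrased ``for $n$ large enough'', Theorem~\ref{thm:conditional_conv} then applies on this full-probability event and delivers \eqref{eq:lowersol}--\eqref{eq:uppersol} for both algorithms. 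The main (and essentially only) obstacle is the bookkeeping in the summability step: the exponent $1/(k+4)$ in \eqref{eq:lower_bound_eps_as} is precisely chosen so that the Gaussian-type decay $\exp(-n\epsilon_n^{k+4}/C)$ beats the polynomial blow-up of the prefactor $\epsilon_n^{-6k}$, and this is the only role played by \eqref{eq:lower_bound_eps_as} beyond what is already granted by Theorem~\ref{thm:estimate_random_graphs}.
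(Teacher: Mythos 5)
Your argument is correct and is essentially the paper's own proof: the paper also reduces the corollary to the summability of the probability bounds (via the nested events $C_{\overline{n}} = \bigcap_{n\ge\overline{n}} A_n$ and a union bound, which is Borel--Cantelli written out by hand), and it uses \eqref{eq:lower_bound_eps_as} for exactly the purpose you identify, namely to make the exponential decay $\exp(-n\epsilon_n^{k+4}/C)$ beat the prefactor $\epsilon_n^{-6k}$. Your polynomial-domination bookkeeping ($\epsilon_n^{-6k}\lesssim n^{6k/(k+4)}$ versus $\exp(-n\epsilon_n^{k+4}/C)\le n^{-M/C}$ with $M$ large) is a slightly more streamlined version of the paper's computation with exponents of $n$ tending to $-\infty$, but the route is the same.
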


An important ingredient for the proof of Theorem \ref{thm:estimate_random_graphs} is the following lemma, which gives an estimate of the distance between the approximate heat kernel on the graph and the heat kernel on the manifold in a uniform sense. Such heat kernel estimates are of independent interest, for example, one should compare with \cite[Theorem 3]{Dunson2021}, where the authors obtain a similar estimate when the frequency cut-off $K_n$ and the time-scale $h_n$ are fixed. In Lemma \ref{lem:heat_kernel_estimate} we improve their result by showing how to choose $K_n$ in terms of $n$ as $n \to +\infty$.

\begin{lemma}\label{lem:heat_kernel_estimate}
In the setting of Theorem \ref{thm:estimate_random_graphs}, there exist constants $a_1, a_2, a_3, a_4 > 0$ such that if $n$ is large enough, then, with probability greater than $1 - a_1 \epsilon_n^{-6k}\operatorname{exp}(-a_2n \epsilon_n^{k+4}) \break - a_3 n\operatorname{exp}(-a_4\frac{n}{(\log(n))^{2q}})$, we have
\begin{equation}\label{eq:pointwise_comp}
\max_{x, y \in V_n}\left| H_{\epsilon_n}^{K_n}(h_n, x, y) - \frac{\rho(y)}{n}H(\kappa(\eta)h_n, x, y)\right| = o\left( \frac{\sqrt{h_n}}{n}\right).
\end{equation}
\end{lemma}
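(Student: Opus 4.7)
The plan is to interpose the truncated spectral representation of the \emph{manifold} heat kernel between $H_n^{K_n}(h_n,\cdot,\cdot)$ and $\tfrac{\rho(y)}{n}H(\kappa h_n,\cdot,\cdot)$. Write $\kappa = \kappa(\eta)$ and let $\{(\lambda_l, f_l)\}_{l\ge 1}$ be the $L^2(\mu)$-orthonormal eigenpairs of $\Delta_\xi$ with $\xi = \rho^2$. Set
$$\tilde H^{K_n}(t,x,y) := \sum_{l=1}^{K_n} e^{-t\lambda_l} f_l(x)f_l(y),$$
and decompose
\begin{align*}
H_n^{K_n}(h_n,x,y) - \tfrac{\rho(y)}{n}H(\kappa h_n,x,y) &= \bigl[H_n^{K_n}(h_n,x,y) - \tfrac{\rho(y)}{n}\tilde H^{K_n}(\kappa h_n,x,y)\bigr] \\
&\quad + \tfrac{\rho(y)}{n}\bigl[\tilde H^{K_n}(\kappa h_n,x,y) - H(\kappa h_n,x,y)\bigr],
\end{align*}
calling the two bracketed terms $(\mathrm{I})$ and $(\mathrm{II})$.

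For the manifold-side term $(\mathrm{II})$, I would control the spectral tail
$$\Bigl|\sum_{l>K_n} e^{-\kappa h_n \lambda_l}f_l(x)f_l(y)\Bigr| \le \sum_{l>K_n} e^{-\kappa h_n \lambda_l}\|f_l\|_\infty^2$$
using Weyl's law $\lambda_l \sim l^{2/k}$ together with the standard sup-norm bound $\|f_l\|_\infty \lesssim \lambda_l^{N}$ for an explicit $N = N(k)$, obtained by iterated elliptic regularity on the smooth weighted Laplacian $\Delta_\xi$. The exponent $s$ in hypothesis \eqref{eq:cond_sq} is tuned precisely so that, under \eqref{eq:K_n_def}--\eqref{eq:h_n_cond}, the product $\kappa h_n \lambda_{K_n}$ grows at least like $(\log n)^{1+sq}$, which makes the tail decay faster than any negative power of $n$, and in particular $o(\sqrt{h_n})$. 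This yields $(\mathrm{II}) = o(\sqrt{h_n}/n)$.

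For $(\mathrm{I})$ I would invoke the quantitative spectral convergence of random geometric graph Laplacians from \cite{GarciaTrillos2020, Calder2022, Calder2022a}: on the high-probability event described in the statement, for each $l \le K_n$ there exist extensions of $v_n^l$ (through the transport plan between the empirical measure and $\nu$) satisfying
$$|\lambda_n^l - \kappa\lambda_l| \lesssim \Psi(\lambda_l)\,\epsilon_n, \qquad \|v_n^l - f_l\|_{L^\infty(V_n)} \lesssim \Phi(\lambda_l)\,\epsilon_n,$$
where $\Psi,\Phi$ are polynomial in $\lambda_l$ with degrees controlled through iterated resolvent estimates and the uniform spectral gap \eqref{eq:cond_spect_gap} (which also forces the eigenvalues to be simple). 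In parallel, Bernstein-type concentration gives $d_n(y) = C_1\rho(y) + O\bigl((\log n/(n\epsilon_n^k))^{1/2}\bigr)$ uniformly in $y\in V_n$; the failure probability of this event is precisely the first term $C\epsilon_n^{-6k}\exp(-n\epsilon_n^{k+4}/C)$ in the statement, and \eqref{eq:lower_bdd_eps} makes it nontrivial. Combining, each summand of $(\mathrm{I})$ equals $\tfrac{1}{n}e^{-\kappa h_n\lambda_l}f_l(x)f_l(y)\rho(y)$ plus an error of size $\tfrac{1}{n}e^{-c h_n\lambda_l}\cdot \mathrm{poly}(\lambda_l)\cdot \epsilon_n$. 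Summing $l \le K_n$ with $\lambda_{K_n} \sim K_n^{2/k}$, the sharpened form \eqref{eq:how_we_use_it} of condition \eqref{eq:cond_eps} recorded in Remark \ref{rem:remark_after_estimate} is exactly calibrated so that the total is $o(\sqrt{h_n}/n)$.

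The principal obstacle, and the reason for the somewhat intricate interplay of the parameters $(q,s,\alpha,\beta)$, is that the constants $\Psi(\lambda_l), \Phi(\lambda_l), \|f_l\|_\infty$ all grow polynomially in the eigenvalue. Closing the estimate requires the exponential damping $e^{-\kappa h_n\lambda_l}$ to beat these polynomial factors once one sums up to $l = K_n \ge (\log n)^q$, and this is exactly what \eqref{eq:cond_sq}, \eqref{eq:h_n_cond} and \eqref{eq:cond_eps} encode. A secondary subtlety is that the spectral-convergence and degree-concentration events must be made to hold simultaneously for all $l \le K_n$: this is arranged via a union bound, and it is the origin of the factor $n$ appearing in the second probability term of the statement.
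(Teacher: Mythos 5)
Your proposal is correct and follows essentially the same route as the paper: interpose the truncated manifold kernel $\sum_{l\le K_n}e^{-\kappa h_n\lambda_l}f_lf_l\rho/n$, handle the graph-side term by the spectral convergence estimates for $(\lambda_n^l,v_n^l)$ together with degree concentration and the polynomial-in-$\lambda_l$ eigenfunction norm bounds, and check that the exponential damping beats the polynomial growth under \eqref{eq:cond_sq}--\eqref{eq:cond_eps}. The only (immaterial) divergence is in the spectral tail, where the paper uses the local Weyl-type bound $\sum_{\lambda_l\le\lambda}f_l^2(z)\le e\,H(1/\lambda,z,z)\lesssim\lambda^{k/2}$ from the Gaussian heat-kernel bounds plus summation by parts, whereas you use individual sup-norm bounds $\|f_l\|_\infty\lesssim\lambda_l^{1+k/2}$ and Weyl's law; both close because $h_n\lambda_{K_n}\gtrsim(\log n)^{1+sq}\to\infty$ makes the tail superpolynomially small.
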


\section{The level set equation for MCF on a weighted manifold}\label{sec:mcf_drift}
In this section, we provide the basic framework for viscosity solutions to mean curvature flow in weighted Riemannian manifolds. 

Hereafter $(M, g)$ is a $k$-dimensional closed Riemannian manifold, and $\xi > 0$ is a smooth function on $M$. Recall that the evolution of a smooth open set $\Omega_0$ by mean curvature follows the trajectory of steepest descent of the area functional, which is defined as

\begin{equation}
\Omega \mapsto \int_{\partial \Omega} dS,
\end{equation}
where $\Omega$ ranges over all open sets in $M$ with a smooth boundary. When we consider a weight $\xi$ on the manifold, the correct functional to consider is the weighted-area functional, defined as

\begin{equation}
\Omega \mapsto \int_{\partial \Omega} \xi dS,
\end{equation}
where $\Omega$ ranges over all open sets in $M$ with smooth boundary. We define the evolution of mean curvature flow with density $\xi$ - hereafter denoted as MCF$_\xi$ - as the trajectory of steepest descent of this functional. To derive an equation for MCF$_\xi$ we consider  a family $\left\{ \Omega(t)\right\}_{0 \le t < T}$ of smooth open sets evolving smoothly in time with normal velocity vector $V$. Denote by $\nu(t)$ a suitable extension of the outer unit normal of $\partial \Omega(t)$. We then have by Gauss' Theorem

\begin{align*}
\frac{d}{dt}\int_{\partial \Omega(t)} \xi dS &= \frac{d}{dt} \int_{\Omega(t)} \frac{1}{\xi}\operatorname{div}(\xi \nu(t))\xi d\volm
\\ &=\int_{\partial \Omega(t)} \frac{1}{\xi}\operatorname{div}(\xi \nu(t))g(V(t), \nu(t)) \xi dS.
\end{align*}
We thus see that the trajectory of steepest descent is given by 

\begin{equation}
g(V, \nu) = -\frac{1}{\xi}\operatorname{div}(\xi \nu).
\end{equation}
We are thus led to the following definition.

\begin{definition}\label{def:mcf}
Let $(M, g)$ be a smooth $k$-dimensional closed Riemannian manifold. Let $\xi > 0$ be a smooth function on $M$. A family $\{\Omega_t\}_{t \ge 0}$ of smooth open subsets of $M$ is said to evolve by MCF$_{\xi}$  if
\begin{equation}\label{eq:eqmcf}
g(V, \nu) = -\frac{1}{\xi}\operatorname{div}(\xi \nu).
\end{equation}
where $V$ is the velocity vector field of the evolution and $\nu$ is the outer unit normal field.
\end{definition}
\begin{remark}
Using that the mean curvature $H(t)$ of $\partial \Omega(t)$ satisfies $\operatorname{div}(\nu(t)) = H(t)$, equation \eqref{eq:eqmcf} can be rewritten as
\begin{equation}\label{eq:mcfrewritten}
g(V, \nu) = -H - g\left(\frac{\nabla \xi}{\xi}, \nu \right),
\end{equation}
which yields the following interpretation for \eqref{eq:eqmcf}: the evolution by MCF$_\xi$ as defined in Definition~\ref{def:mcf} is driven by the minimization of two quantities, area and density. The first term on the right-hand side of \eqref{eq:mcfrewritten} forces the evolution to follow a trajectory which decreases as much as possible the area of $\partial \Omega(t)$, whereas the second term on the right-hand side forces the evolution to move towards regions where the density $\xi$ is low.
\end{remark}
We now derive the corresponding level set formulation for the above evolution in the spirit of \cite{Evans1991, Chen1991}. Let $u: [0, +\infty) \times M \to \mathbf{R}$ be a smooth function, assume for this heuristic discussion that $Du \neq 0$ everywhere. For any $s \in \mathbf{R}$ define $\Omega_t^s := \{ x \in M:\ u(t,x) > s\}$ and assume that $\{\Omega_t^s\}_{t \ge 0}$ evolves by $\text{MCF}_{\xi}$ defined in Definition~\ref{def:mcf}. Let $s \in \mathbf{R}$ and let $x: (0, T) \to M$ a smooth curve such that $x(t) \in \partial \Omega_t^s$ for every time $0 < t < T$. Then

\begin{align*}
0 & = \frac{d}{dt} u(t, x(t))
\\ & = (\partial_t u) (t, x(t)) + g(\nabla u(t, x(t)), \dot{x}(t)).
\end{align*}
Using the fact that the outer normal to the super level set $\Omega_t^s$ is given by $\nu(t, x) = -\frac{\nabla u(t, x)}{|\nabla u(t, x)|}$ and plugging in \eqref{eq:eqmcf} we obtain

\begin{align*}
(\partial_t u)(t, x(t)) &= |\nabla u(t, x(t))|g(\nu(t, x(t)),V(t, x(t))) 
\\ &=-|\nabla u(t, x(t))| \frac{1}{\xi(x(t))}\operatorname{div}(\xi \nu)(t,x(t)).
\end{align*}
Using the product rule for the divergence and recalling that $\nu = -\frac{\nabla u}{|\nabla u|}$ we observe that $u$ solves

\begin{equation}\label{eq:level_set_equation}
\partial_tu = \left\langle g - \frac{Du \otimes Du}{|Du|^2}, D^2u\right\rangle + g\left(\frac{\nabla \xi}{\xi}, \nabla u\right),
\end{equation}
where we denoted by $\langle \cdot, \cdot \rangle$ the extension of $g$ to the linear bundle of $T^*M \otimes T^*M$, i.e.\ for $A, B$ sections of $T^*M \otimes T^*M$ we have in local coordinates

\begin{equation}
\langle A, B \rangle := \sum_{i,j,k,l=1}^k A_{ij}g^{jk}g^{kl}B_{li}.
\end{equation}
From \eqref{eq:level_set_equation} we are led to the following definition.

\begin{definition}
Let $u: (0,T) \times M \to \mathbf{R}$ be a smooth function with $Du \neq 0$ everywhere. Then $u$ is said to solve the level set formulation of $\text{MCF}_{\xi}$ if \eqref{eq:level_set_equation} holds on $(0,T) \times M$.
\end{definition}

\begin{remark}
Another way of deriving directly equation \eqref{eq:level_set_equation} without relying on \eqref{eq:eqmcf} is by computing the steepest descent of the total variation functional $\int_M |\nabla u| \xi d\volm$ with respect to the metric
\begin{align*}
(\delta u, \delta u) = \int_M \left( \frac{\delta u}{|\nabla u|} \right)^2 |\nabla u|\xi d\volm.
\end{align*}
Indeed, consider a smooth function $u:(0, T) \times M \to \mathbf{R}$ with $Du \neq 0$, we then compute
\begin{align*}
\frac{d}{dt}\int_M |\nabla u(x, t)| \xi(x)d\volm &= \int_M g\left(\frac{\nabla u(x, t)}{|\nabla u(x, t)|}, \nabla \partial_t u(x, t)\right) \xi(x) d\volm
\\ & = -\int_M \operatorname{div}\left( \xi\frac{\nabla u}{|\nabla u|}\right)(t, x) \partial_t u(t, x)d\volm.
\end{align*}
Thus the steepest descent of the total variation functional with respect to the metric defined above is given by requiring
\begin{equation}
\partial_t u = |\nabla u| \frac{1}{\xi}\operatorname{div}\left( \xi \frac{\nabla u}{|\nabla u|}\right),
\end{equation}
which is equivalent to \eqref{eq:level_set_equation}.
\end{remark}
We are now ready to introduce a weak notion of solution for \eqref{eq:level_set_equation} based on the notion of viscosity solution. In the context of mean curvature flow with constant density $\xi = 1$ it was introduced in \cite{Evans1991} and \cite{Chen1991} in the Euclidean case, and in \cite{Ilmanen1992} on curved manifolds. If $U \subset (0,T) \times M$ is an open set, $(t_0, x_0) \in U$ and if $u: (0, T) \times M \to \mathbf{R}$ is an upper (lower) semi-continuous function, a smooth function $\varphi: U \to \mathbf{R}$ is said to be tangent to $u$ at $(t_0, x_0)$ from above (below), if $u-\varphi$ has a local maximum (minimum) at $(t_0, x_0)$.

\begin{definition}\label{def:visc_sol}
An upper (lower) semi-continuous function $u: (0, T) \times M \to \mathbf{R}$ is said to be a viscosity subsolution (supersolution) for \eqref{eq:level_set_equation} if for every $(t_0, x_0) \in (0, T) \times M$ and every smooth function $\varphi$ tanget to $u$ from above (below):
\begin{enumerate}[(i)]
\item If $D\varphi(t_0, x_0) \neq 0$ then
\begin{equation}
\partial_t\varphi \le \bigg\langle g - \frac{D\varphi \otimes D\varphi}{|D\varphi|^2},  D^2\varphi \bigg\rangle + g\left(\frac{\nabla \xi}{\xi}, \nabla \varphi\right)\  {(\ge)}\quad \text{at}\ (t_0, x_0)
\end{equation}\label{item:first_viscosity}
\item Otherwise there exists $\nu \in T_{x_0}^*M$ with $|\nu| \le 1$ such that
\begin{equation}
\partial_t\varphi \le \langle g - \nu \otimes \nu , D^2\varphi\rangle \  {(\ge)}\quad \text{at}\ (t_0, x_0)
\end{equation}\label{item:second_visc}
\end{enumerate}
We say that $u$ is a viscosity solution if it is both a subsolution and a supersolution.
\end{definition}
In \cite{Ilmanen1992} the author introduces the notion of viscosity subsolution/supersolution to mean curvature flow on a manifold (which corresponds to choosing the constant density $\xi = 1$) requiring continuity of the function $u$. We need to work with this slightly more general definition because the functions $u_*$ and $u^*$ in Theorem~\ref{thm:conditional_conv} are not continuous. We recall the following useful characterization of Definition~\ref{def:visc_sol}, which says that we need to check condition \eqref{item:second_visc} only when also $D^2\varphi(t_0,x_0) = 0$. 

\begin{proposition}\label{prop:equivalent_visc}
Let $u: (0, T) \times M \to \mathbf{R}$ be an upper (lower) semicontinuous function. Then $u$ is a viscosity subsolution (supersolution) of the level set formulation of $\text{MCF}_{\xi}$ if and only if whenever $\varphi$ is tangent to $u$ at $(t_0, x_0)$ from above (below), \eqref{item:first_viscosity} is satisfied and if $D\varphi(t_0, x_0) = 0$ and $D^2\varphi(t_0,x_0) = 0$, then
\begin{equation}
\partial_t \varphi(t_0, x_0) \le 0\ (\ge).
\end{equation}
\end{proposition}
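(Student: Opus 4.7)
The forward direction (Definition~\ref{def:visc_sol} implies the simplified condition of Proposition~\ref{prop:equivalent_visc}) is immediate: condition~\eqref{item:first_viscosity} is unchanged between the two statements, and in the fully degenerate case $D\varphi(t_0,x_0) = 0 = D^2\varphi(t_0,x_0)$, condition~\eqref{item:second_visc} applied to $\nu = 0$ yields $\partial_t\varphi(t_0,x_0) \le \langle g, 0\rangle = 0$ (with $\ge 0$ in the supersolution case).

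For the reverse direction I focus on the subsolution case, the supersolution statement being symmetric. The only non-trivial situation is a test function $\varphi$ tangent to $u$ from above at $(t_0, x_0)$ with $D\varphi(t_0,x_0) = 0$ and $X := D^2\varphi(t_0,x_0) \neq 0$. Since $D\varphi(t_0,x_0) = 0$, the drift term in~\eqref{item:second_visc} vanishes, and the existence of a suitable $\nu$ reduces to showing
\begin{equation*}
    \partial_t\varphi(t_0,x_0) \le \sup_{|\nu|\le 1}\langle g(x_0) - \nu\otimes\nu, X\rangle,
\end{equation*}
where the supremum is attained at some admissible $\nu$.

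My plan is a standard viscosity perturbation. First, after replacing $\varphi$ by $\varphi + c\,((t-t_0)^4 + d_M(x,x_0)^4)$ with $c>0$ small, which preserves both $D\varphi$ and $D^2\varphi$ at $(t_0,x_0)$, I may assume that $(t_0,x_0)$ is a strict local maximum of $u-\varphi$ on a closed neighborhood $\bar Q$. Working in a normal chart at $x_0$, for each unit vector $\sigma \in T_{x_0}M$ and each $\epsilon>0$ I introduce the tilted test function
\begin{equation*}
    \varphi_\epsilon^\sigma(t,x) := \varphi(t,x) + \epsilon\,\langle \sigma, x-x_0\rangle.
\end{equation*}
The upper semicontinuous function $u-\varphi_\epsilon^\sigma$ attains a maximum on $\bar Q$ at some $(t_\epsilon,x_\epsilon)$; a direct comparison at $(t_0,x_0)$ combined with the strict-maximum property forces $(t_\epsilon,x_\epsilon)\to(t_0,x_0)$ as $\epsilon\downarrow 0$, so eventually $(t_\epsilon,x_\epsilon)$ lies in the interior of $\bar Q$ and $\varphi_\epsilon^\sigma$ is tangent to $u$ from above there. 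Assuming for the moment that $p_\epsilon := D\varphi(t_\epsilon,x_\epsilon) + \epsilon\sigma \neq 0$, the Proposition's hypothesis~\eqref{item:first_viscosity} applied at $(t_\epsilon,x_\epsilon)$ gives
\begin{equation*}
    \partial_t\varphi(t_\epsilon,x_\epsilon) \le \left\langle g - \frac{p_\epsilon\otimes p_\epsilon}{|p_\epsilon|^2}, D^2\varphi(t_\epsilon,x_\epsilon)\right\rangle + g\!\left(\frac{\nabla\xi}{\xi}, p_\epsilon\right).
\end{equation*}
Extracting a subsequence along which $p_\epsilon/|p_\epsilon| \to \tilde\sigma$, I use continuity of $D^2\varphi$ and $\partial_t\varphi$ together with $p_\epsilon \to 0$ (which kills the drift term) to pass to the limit and obtain $\partial_t\varphi(t_0,x_0) \le \langle g(x_0) - \tilde\sigma\otimes\tilde\sigma, X\rangle$. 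This is the required inequality for $\nu = \tilde\sigma$, since $|\tilde\sigma|_{g(x_0)} = 1$.

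The main obstacle is ruling out the degenerate scenario $p_\epsilon = 0$ along a subsequence, in which neither clause of the Proposition applies (since $D^2\varphi(t_\epsilon,x_\epsilon)$ remains close to $X\neq 0$, so \eqref{item:first_viscosity} fails for lack of gradient and the simplified clause requires also a vanishing Hessian). This amounts to the equation $D\varphi(t_\epsilon,x_\epsilon) = -\epsilon\sigma$ on the $(k-1)$-dimensional parameter $\sigma$; I would handle it by a genericity argument, choosing $\sigma$ outside a countable union of negligible sets indexed by a prescribed sequence $\epsilon_n \downarrow 0$ so that $p_{\epsilon_n}\neq 0$ along this sequence. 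Once this genericity is secured the limiting procedure above concludes the argument, and the supersolution statement follows by the analogous reasoning with inequalities reversed and the infimum in place of the supremum.
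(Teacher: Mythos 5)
Note first that the paper does not actually prove this proposition: it only states that it ``is proved in the Euclidean case in \cite[Proposition 2.2]{Barles1995}'' and that the manifold case is analogous. So your attempt has to stand on its own. Your forward direction is fine. The reverse direction, however, has a genuine gap precisely at the point you flag, and the genericity fix you propose cannot work: the degenerate scenario $p_\epsilon=0$ is not exceptional in $\sigma$ --- it can occur for \emph{every} tilt direction and every $\epsilon$. Concretely, take $u\equiv 0$ near $(t_0,x_0)=(0,0)$ and, in a normal chart, $\varphi(t,x)=\tfrac{\lambda}{2}|x|^2+c\,(t^4+|x|^4)$ with $\lambda>0$; this is a legitimate instance of your hard case ($D\varphi=0$, $D^2\varphi=\lambda\,\mathrm{Id}\neq 0$, strict maximum). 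Then $u-\varphi^\sigma_\epsilon=-\tfrac{\lambda}{2}|x|^2-c(t^4+|x|^4)-\epsilon\langle\sigma,x\rangle$ is smooth, and its maximum over $\bar Q$ is attained at the interior critical point $t_\epsilon=0$, $x_\epsilon\approx-\epsilon\sigma/\lambda$, where by the very definition of a critical point $D\varphi^\sigma_\epsilon(t_\epsilon,x_\epsilon)=0$ while $D^2\varphi^\sigma_\epsilon(t_\epsilon,x_\epsilon)\approx\lambda\,\mathrm{Id}\neq 0$. Neither clause of the hypothesis applies there, for any $\sigma$ whatsoever. The mechanism is general: whenever $u$ happens to be differentiable in $x$ with vanishing spatial gradient near $x_0$, any test function touching $u$ from above at a nearby interior point must have zero spatial gradient there, so the linear tilt is always absorbed by a shift of the maximum point. (This is a counterexample to the strategy, not to the statement --- in the example the desired inequality $\partial_t\varphi\le\langle g-\nu\otimes\nu,\lambda\,\mathrm{Id}\rangle$ holds trivially.)

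The example also shows which ingredient is missing. In it, the needed information comes from applying the hypothesis to a \emph{different} test function, namely one whose spatial Hessian also vanishes at the touching point (e.g.\ $at+|x|^4$ in place of $at+\tfrac{\lambda}{2}|x|^2$), which triggers the clause $\partial_t\varphi\le 0$. A correct proof of the hard implication, as in \cite{Barles1995} and the references therein, must therefore do more than tilt: starting from the second-order Taylor inequality $u(t,x)\le u(t_0,x_0)+a(t-t_0)+\tfrac12\langle Ax',x'\rangle+o(|t-t_0|+|x'|^2)$ one constructs auxiliary test functions in which part (or all) of the quadratic term and the $o(|x'|^2)$ error are absorbed into a radial $C^2$ function of $|x'|$ vanishing to second order at the origin (the standard lemma upgrading a modulus $\sigma(r)=o(r^2)$ to such a function), combined with a case distinction on the signs of the eigenvalues of $A$; the perturbation/tilting step is only one piece of that argument. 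As written, your proof establishes the reverse implication only under the unjustified assumption that $p_\epsilon\neq 0$ along some sequence, and the case you cannot reach is exactly the substantive content of the proposition.
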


Proposition~\ref{prop:equivalent_visc} is proved in the Euclidean case in \cite[Proposition 2.2]{Barles1995}. On a manifold, the proof is analogous. We recall the following comparison principle.
\begin{theorem}\label{thm:comparison}
Let $M$ be a closed $k$-dimensional Riemannian manifold. Let $\xi > 0$ be a smooth function on $M$. Let $u$ be a subsolution of \eqref{eq:level_set_equation} on $(0, T] \times M$ and let $v$ be a viscosity supersolution of \eqref{eq:level_set_equation} on $(0,T] \times M$. Define
\begin{equation}
u^*(x) := \limsup_{y \to x,\,t \to 0} u(t, y),\ v_*(x) := \liminf_{y \to x,\, t \to 0} v(t, y).
\end{equation}
Assume that $u^* \le v_*$ and that either $u^*$ or $v_*$ is continuous. Then for every $t \in (0, T]$
\begin{equation}
u(t, \cdot) \le v(t, \cdot).
\end{equation}
\end{theorem}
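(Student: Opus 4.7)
The plan is to follow the classical doubling-of-variables strategy for comparison principles of degenerate parabolic equations, adapting the framework of Evans--Spruck \cite{Evans1991} and Chen--Giga--Goto \cite{Chen1991} to the compact Riemannian setting treated by Ilmanen \cite{Ilmanen1992}. The new drift term $g(\nabla \xi / \xi, \nabla u)$ in \eqref{eq:level_set_equation} is Lipschitz continuous in the gradient variable and hence a benign first-order perturbation of the ordinary level-set operator, so no genuinely new ideas beyond the unweighted case on a manifold are required. First I would carry out the standard reduction: argue by contradiction assuming $\sup_{(0,T]\times M}(u-v)>0$, replace $u$ by $u - \eta/(T-t)$ for small $\eta>0$ so that the max is attained at some interior time $t_0\in(0,T]$ and the subsolution inequality becomes strict, and use the upper/lower semicontinuity of $u$, $v$ together with the compactness of $M$ to locate a maximizer $(t_0,x_0)$.

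Next comes the doubling. I introduce
\begin{equation*}
\Phi_\varepsilon(t,x,y) := u(t,x) - v(t,y) - \tfrac{1}{\varepsilon}\, d_M(x,y)^4 - \tfrac{\eta}{T-t},
\end{equation*}
and let $(t_\varepsilon, x_\varepsilon, y_\varepsilon)$ be a maximizer. The fourth power of the Riemannian distance is essential: because the operator is singular at $p=0$, a quadratic penalty $d_M^2/\varepsilon$ would produce a test-function gradient of order $\varepsilon^{-1/2}$, whereas $d_M^4/\varepsilon$ produces cotangent vectors of size $d_M(x_\varepsilon,y_\varepsilon)^3/\varepsilon$ that vanish with the distance. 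Standard arguments then yield $t_\varepsilon \to t_0$, $d_M(x_\varepsilon, y_\varepsilon)^4/\varepsilon \to 0$, and $x_\varepsilon, y_\varepsilon \to x_0$. Now I invoke the parabolic Crandall--Ishii lemma on $M$, applied in normal coordinates around $x_0$ (or via the frame bundle), to produce $a_\varepsilon\in\mathbf{R}$, cotangent vectors $p_\varepsilon^x\in T^*_{x_\varepsilon}M$, $p_\varepsilon^y\in T^*_{y_\varepsilon}M$ that agree up to parallel transport modulo $o(1)$, and symmetric tensors $X_\varepsilon,Y_\varepsilon$ satisfying $X_\varepsilon \le Y_\varepsilon + \omega(\varepsilon)$, with $(a_\varepsilon,p_\varepsilon^x,X_\varepsilon)$ a parabolic super-jet of $u$ and $(a_\varepsilon,p_\varepsilon^y,Y_\varepsilon)$ a parabolic sub-jet of $v$.

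Subtracting the viscosity inequalities for $u$ and $v$ and using that the level-set operator is degenerate-elliptic in $X$ and Lipschitz in $p$, the goal is to arrive at
\begin{equation*}
\tfrac{\eta}{(T-t_\varepsilon)^2} \le F(p_\varepsilon^y, Y_\varepsilon) - F(p_\varepsilon^x, X_\varepsilon) + o(1),
\end{equation*}
where $F(p,X) = \langle g - p\otimes p/|p|^2, X\rangle + g(\nabla\xi/\xi,p)$. The Hessian comparison $X_\varepsilon - Y_\varepsilon \le \omega(\varepsilon)$, together with the near-equality of $p_\varepsilon^x$ and $p_\varepsilon^y$ modulo parallel transport and curvature terms of order $d_M(x_\varepsilon,y_\varepsilon)$, bounds the right-hand side by $o(1)$, yielding a contradiction as $\varepsilon\to 0$. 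The singular case $p_\varepsilon^x = 0$ (which forces also the relevant Hessian to vanish up to $o(1)$) is treated via Proposition~\ref{prop:equivalent_visc}, which directly gives $\partial_t \varphi \le 0 \le \partial_t \psi$ at the contact points and immediately contradicts the strict positivity $\eta/(T-t_\varepsilon)^2>0$ coming from the penalization.

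The main obstacle will be the interaction between the degeneracy of the mean-curvature operator at $p=0$ and the curvature of $M$ in the doubling step. In Euclidean space the test function $|x-y|^4$ handles the singular direction because its Hessian decomposes explicitly; on a manifold the natural replacement $d_M(x,y)^4$ is only $C^{3,1}$ near the diagonal, and its second derivatives must be expanded via Jacobi fields in order to verify the parallel-transport identities and Hessian bounds required by the Crandall--Ishii lemma. Once this geometric bookkeeping is in place, the weighted drift term $g(\nabla\xi/\xi,\nabla u)$ drops into the framework without incident, since it contributes only $|p_\varepsilon^x - p_\varepsilon^y|\,\|\nabla\xi/\xi\|_\infty = O(d_M(x_\varepsilon,y_\varepsilon)) = o(1)$ to the subtracted inequality.
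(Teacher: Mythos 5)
Your outline follows the classical doubling-of-variables route (Chen--Giga--Goto/Evans--Spruck, transplanted to a closed manifold as in Ilmanen), which is indeed the intended argument: the paper does not write out a proof but asserts that Ilmanen's proof for $\xi\equiv 1$ and continuous $u,v$ "goes through" under the stated hypotheses. The treatment of the drift term as a benign Lipschitz first-order perturbation contributing $O(d_M(x_\varepsilon,y_\varepsilon))$ is correct, and the quartic penalty together with the separate treatment of the singular case via Proposition~\ref{prop:equivalent_visc} is the right mechanism for the degeneracy at $p=0$.

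There is, however, a genuine gap: the two nonstandard hypotheses of the theorem --- $u^*\le v_*$ and continuity of one of $u^*,v_*$ --- are never used in your argument, yet they are exactly the point of this version of the statement (and the reason the paper has to remark that the argument still works for merely semicontinuous $u,v$ with ordered initial traces). Since $u$ and $v$ live only on $(0,T]\times M$ and the penalization $-\eta/(T-t)$ only repels the maximizer from $t=T$, nothing in your setup prevents the supremum of $\Phi_\varepsilon$ from being attained only in the limit $t\downarrow 0$, where no viscosity inequalities are available and the Crandall--Ishii lemma cannot be applied. The standard fix is to extend $u$ and $v$ to $t=0$ by $u^*$ and $v_*$ (keeping $u$ u.s.c.\ and $v$ l.s.c.\ on $[0,T]\times M$), and then show that if $t_\varepsilon\to 0$ the positive value of $\Phi_\varepsilon$ forces $\limsup\bigl(u^*(x_\varepsilon)-v_*(y_\varepsilon)\bigr)>0$ with $d_M(x_\varepsilon,y_\varepsilon)\to 0$; this contradicts $u^*\le v_*$ only if one of the two envelopes is continuous (otherwise $u^*(x_\varepsilon)-v_*(y_\varepsilon)$ can stay positive for $x_\varepsilon\neq y_\varepsilon$ even though $u^*\le v_*$ pointwise). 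This step must be made explicit for the proof to be complete. A secondary inaccuracy: the principal part $\langle g-p\otimes p/|p|^2,X\rangle$ is \emph{not} Lipschitz in $p$ --- it is $0$-homogeneous and discontinuous at $p=0$ --- so the subtraction of the two viscosity inequalities cannot be closed by a Lipschitz estimate in $p$; one must use that the symmetric penalty makes $p_\varepsilon^x$ and $p_\varepsilon^y$ agree (up to parallel transport and curvature corrections), so that the two projection matrices nearly coincide, and treat small gradients through the singular alternative. You gesture at both points, but as written the "Lipschitz in $p$" claim is false and would mislead the estimate.
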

Theorem~\ref{thm:comparison} is proved when $\xi = 1$ is the constant density and the functions $u, v$ are assumed to be continuous in \cite{Ilmanen1992}. A careful look at the proof reveals that the same argument goes trough with the above assumptions. When $M = \mathbf{R}^k$ is the flat Euclidean space, an even more general version of Theorem~\ref{thm:comparison} can be found in \cite[Theorem 18]{Ambrosio2000}.
We also recall the following result concerning the existence of viscosity solutions, which can be again found in \cite{Ilmanen1992} for the case of a constant density $\xi = 1$.
\begin{theorem}\label{thm:existence_viscosity}
Let $M$ be a $k$-dimensional closed Riemannian manifold, and let $\xi>0$ be a smooth function on $M$. Let $u_0:M \to \mathbf{R}$ be continuous. Then there exists a unique viscosity solution $u:[0, T) \times M \to \mathbf{R}$ to \eqref{eq:level_set_equation} such that $u(0) = u_0$.
\end{theorem}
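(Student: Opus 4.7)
The plan is to follow Perron's method, adapting Ilmanen's proof in \cite{Ilmanen1992} (which treats the case $\xi \equiv 1$) to the weighted setting. The drift term $g(\nabla\xi/\xi,\nabla u)$ appearing in \eqref{eq:level_set_equation} is a bounded first-order Lipschitz perturbation of the pure mean curvature operator (since $M$ is compact and $\xi$ is smooth and strictly positive), so it does not affect the core viscosity machinery.  Uniqueness is immediate from the comparison principle Theorem~\ref{thm:comparison}: given two viscosity solutions $u,v$ with $u(0,\cdot)=v(0,\cdot)=u_0$ continuous, the initial relaxations coincide with $u_0$ and comparison gives $u\le v$ and $v\le u$.

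For existence, I would define the Perron envelope
\begin{equation}
u(t,x):=\sup\bigl\{w(t,x)\,:\,w\text{ is a viscosity subsolution of \eqref{eq:level_set_equation} with }w^*(0,\cdot)\le u_0\bigr\},
\end{equation}
and verify the three standard ingredients. \textbf{(a)} Non-emptiness and boundedness of the family, via barriers. Because $M$ is compact, $u_0$ admits a modulus of continuity $\omega$. For each $y\in M$ I would build radial sub- and super-barriers of the form $w^{y,\pm}(t,x)=u_0(y)\pm\omega(d_M(x,y))\pm Ct$ with $C$ depending only on $\|\nabla\xi/\xi\|_\infty$, $\|\nabla^2 d_M\|$ on a neighborhood of the diagonal, and $\omega$. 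A direct computation (using that $d_M$ is smooth away from the cut locus and employing the viscosity convention at the cut locus) shows these are super/subsolutions on a short time interval, and patching across $M$ by taking infima/suprema over $y$ yields global barriers that sandwich $u_0$ at $t=0$. \textbf{(b)} Stability: standard viscosity arguments give that $u^*$ is a subsolution and that the lower semicontinuous envelope $u_*$ is a supersolution — the latter via the classical bump-function perturbation that would otherwise contradict maximality of $u$. \textbf{(c)} The comparison principle Theorem~\ref{thm:comparison} applied to $u^*$ and $u_*$ yields $u^*\le u_*$, so $u=u^*=u_*$ is continuous; the barriers force $u(0,\cdot)=u_0$.

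The step I expect to be the main obstacle is constructing the sub- and super-barriers rigorously on the weighted manifold, because the distance function $d_M(\cdot,y)$ fails to be smooth across the cut locus of $y$, and one must check that the extra drift term in \eqref{eq:level_set_equation} remains dominated by the chosen constant $C$ uniformly. The cleanest workaround is to use the viscosity formulation directly: since $d_M(\cdot,y)$ is semiconcave globally and smooth away from the cut locus, test functions touching $w^{y,\pm}$ from the correct side inherit gradient bounds that make the inequality verifiable pointwise, with the drift absorbed into $C$ by the uniform bound on $\nabla\xi/\xi$. Once the barriers are in hand, the remainder of the argument is a routine transcription of the Perron-method proof of \cite[Section 5]{Ilmanen1992} to the present setting, with the drift term treated as a harmless lower-order addition wherever it appears in test-function inequalities.
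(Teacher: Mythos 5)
The paper does not actually prove this theorem: it cites Ilmanen's Perron-method construction for the unweighted case $\xi\equiv 1$ and asserts that the weighted case is analogous, which is precisely the route you take. Your outline (uniqueness from Theorem~\ref{thm:comparison}, Perron envelope with distance-function barriers, the drift $g(\nabla\xi/\xi,\nabla u)$ absorbed as a bounded lower-order term on the compact manifold) is the intended argument and is correct, up to the standard replacement of $\omega$ by a smooth concave majorant so that $w^{y,\pm}$ are genuine viscosity barriers.
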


Finally, we recall the following \emph{relabeling property}, which is proved in \cite{Ilmanen1992} in the case of a constant density $\xi = 1$.
\begin{lemma}\label{lem:relab}
Let $M$ be a $k$-dimensional closed Riemannian manifold, and let $\xi > 0$ be a smooth function on $M$. Let $u: [0, T) \times M \to \mathbf{R}$ be a viscosity solution to \eqref{eq:level_set_equation}. Then for every continuous map $\Psi: \mathbf{R} \to \mathbf{R}$, the function $v := \Psi \circ u$ is a viscosity solution to \eqref{eq:level_set_equation}.
\end{lemma}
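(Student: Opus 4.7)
The plan is to adapt Ilmanen's proof of the relabeling property for MCF on a closed manifold \cite{Ilmanen1992} to the weighted setting, where the drift term $g(\nabla\xi/\xi,\nabla u)$ appears. The underlying reason the lemma holds is the \emph{geometric} nature of the operator defining \eqref{eq:level_set_equation}. Writing the spatial operator as
\begin{equation*}
F(p,A) := \left\langle g - \frac{p\otimes p}{|p|^2},\,A\right\rangle + g\!\left(\frac{\nabla\xi}{\xi},\,p\right), \qquad p \neq 0,
\end{equation*}
a direct computation yields the scaling identity
\begin{equation*}
F(\lambda p,\,\lambda A+\mu\,p\otimes p)=\lambda\,F(p,A) \quad\text{for all } \lambda\in\mathbf{R}\setminus\{0\},\ \mu\in\mathbf{R},
\end{equation*}
because the projection $g - p\otimes p/|p|^2$ is invariant under $p\mapsto\lambda p$, annihilates $p\otimes p$ in the Frobenius pairing, and the drift term is linear in $p$. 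This invariance is the analytic manifestation of the fact that MCF$_\xi$ depends only on the level sets of $u$, not on their labeling.

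Next I would establish the lemma under the additional assumption that $\Psi\in C^\infty(\mathbf{R})$ with $\Psi'\neq 0$ everywhere. Since $\Psi$ is then a smooth diffeomorphism onto its image, any smooth test function $\varphi$ tangent to $v:=\Psi\circ u$ from above at $(t_0,x_0)$ produces a smooth test function $\psi := \Psi^{-1}\circ\varphi$ near $(t_0,x_0)$ which is tangent to $u$ from above (if $\Psi'(a)>0$) or from below (if $\Psi'(a)<0$), where $a:=u(t_0,x_0)$. The chain rule at $(t_0,x_0)$ gives
\begin{equation*}
\partial_t\varphi=\Psi'(a)\,\partial_t\psi,\ \ D\varphi=\Psi'(a)\,D\psi,\ \ D^2\varphi=\Psi'(a)\,D^2\psi+\Psi''(a)\,D\psi\otimes D\psi,
\end{equation*}
and combined with the scaling identity above, the sub- (resp.\ super-) solution inequality for $u$ at $\psi$ translates to the corresponding inequality for $v$ at $\varphi$. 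The critical-point case $D\psi(t_0,x_0)=0$ is handled via Proposition~\ref{prop:equivalent_visc}, since then also $D\varphi(t_0,x_0)=0$ and $D^2\varphi(t_0,x_0)=\Psi'(a)\,D^2\psi(t_0,x_0)$, and one concludes from the sign of $\partial_t\psi$ together with that of $\Psi'(a)$.

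Finally, to handle arbitrary continuous $\Psi$, I use that $M$ is closed and $u$ is continuous, hence $u$ takes values in a compact interval $I\subset\mathbf{R}$. Approximating $\Psi|_I$ uniformly by piecewise smooth, piecewise strictly monotonic functions $\Psi_j$, each $\Psi_j\circ u$ would be shown to be a viscosity solution by gluing the arguments of the previous step across the break-points of $\Psi_j$; the final conclusion follows from $\Psi_j\circ u\to\Psi\circ u$ uniformly and the stability of viscosity solutions under uniform convergence. The main obstacle is the treatment of the break-points, where $\Psi_j$ has a local extremum: at such points the test-function transformation $\psi=\Psi^{-1}\circ\varphi$ is unavailable, and the viscosity inequality must instead be verified directly using Proposition~\ref{prop:equivalent_visc} together with the geometric scaling identity for $F$, simultaneously exploiting the sub- and super-solution properties of $u$ to produce the correct one-sided inequality for $v$.
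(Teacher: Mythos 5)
First, a remark on the comparison itself: the paper does not actually prove Lemma~\ref{lem:relab} — it cites \cite{Ilmanen1992} for the case $\xi = 1$ and leaves the weighted extension implicit — so you are doing strictly more work than the text. The part of your argument that addresses the genuinely new feature of the weighted setting is correct: the identity $F(\lambda p,\lambda A+\mu\,p\otimes p)=\lambda F(p,A)$ is exactly the geometricity property that must be rechecked when the drift $g(\nabla\xi/\xi,p)$ is added, and your verification (the projection is $0$-homogeneous in $p$ and annihilates $p\otimes p$; the drift is linear in $p$) is right. Your Step 2 (smooth $\Psi$ with $\Psi'\neq 0$, pullback $\psi=\Psi^{-1}\circ\varphi$, with the case $D\varphi(t_0,x_0)=0$, $D^2\varphi(t_0,x_0)=0$ dispatched via Proposition~\ref{prop:equivalent_visc}) is the standard argument and is sound; combined with uniform approximation and stability it yields the lemma for all \emph{monotone} continuous $\Psi$, which is in fact all that the paper uses (in the proof of Theorem~\ref{thm:conditional_conv} only nondecreasing $\Psi\ge\operatorname{sign}^*$ appear).

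As a proof of the lemma as stated — for \emph{every} continuous $\Psi$ — there is however a genuine gap, precisely at the point you flag as the ``main obstacle'' and then do not resolve. At a break-point $a$ where $\Psi_j$ has a strict local maximum, write $\Psi_j=\min(\Psi^+,\Psi^-)$ near $a$ with $\Psi^+$ nondecreasing and $\Psi^-$ nonincreasing; then locally $\Psi_j\circ u=\min(\Psi^+\circ u,\Psi^-\circ u)$. The minimum of two supersolutions is a supersolution, but the minimum of two subsolutions is in general \emph{not} a subsolution (and dually at local minima of $\Psi_j$), so ``gluing the monotone pieces'' does not establish the subsolution property of $\Psi_j\circ u$ on $\{u=a\}$. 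Moreover, the scaling identity for $F$ gives no information there: any test function touching $\Psi_j\circ u$ from below at such a point has no admissible pullback through $\Psi_j^{-1}$, and the identity is vacuous when the relevant gradient degenerates. What is missing is an actual argument for the case $\Psi'(u(t_0,x_0))=0$ — e.g.\ the one in \cite{Chen1991, Ilmanen1992}, which shows that an admissible test function at such a point necessarily has spatial derivatives degenerating to the order needed to invoke item \eqref{item:second_visc} of Definition~\ref{def:visc_sol}. Either supply that argument, or state and prove the lemma for monotone continuous $\Psi$ only, which suffices for every use made of it in this paper.
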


\section{MBO scheme on manifolds}\label{sec:mbo_manifolds}

As in the previous section, $M$ will denote a $k$-dimensional closed Riemannian manifold and $\xi > 0$ will denote a smooth function on $M$. The following algorithm can be used to approximate the evolution of an open set $\Omega_0 \subset M$ with smooth boundary by $\text{MCF}_{\xi}$.

\begin{algorithm}[MBO scheme on manifolds]\label{algo}
Fix a time-step size $h > 0$, a diffusion coefficient $\kappa > 0$ and a (bounded) drift $f:M \to \mathbf{R}$. Let $\Omega_0 \subset M$ be an open set with a smooth boundary. For each $n \in \mathbf{N}$ define inductively $\Omega_{l+1}$ as follows.
\begin{enumerate}
\item \textbf{Diffusion}. Define 
\begin{equation}
u_l := e^{-h\kappa\Delta_{\xi}}\mathbf{1}_{\Omega_l}.\nonumber
\end{equation}
\item \textbf{Thresholding}. Define $\Omega_{n+1}$ by
\begin{equation}
\Omega_{l+1} = \left\{ u_l \ge \frac{1}{2} + f\sqrt{h} \right\}.\nonumber
\end{equation}
\end{enumerate}
\end{algorithm}

We then have the following result for one step of MBO.
\begin{theorem}\label{thm:thresholding_one_step}
Let $M$, $\xi$ be as above. Let $\Omega_0$ be a smooth open set such that $\operatorname{diam}(\Omega_0) < \frac{\operatorname{inj}(M)}{2}$. Let $\Omega_1$ be obtained by applying one step of MBO with a bounded drift $f:M \to \mathbf{R}$ to $\Omega_0$ with a given step size $h>0$ and a given diffusion coefficient $\kappa > 0$. Let $x \in \partial \Omega_0$. Let $\nu(x) \in T_xM$ be the outer unit normal to $\partial \Omega_0$ at $x$ and define
\begin{equation}
z(x) := \begin{cases}
\sup\left\{s \in \mathbf{R}^{-}|\ \operatorname{exp}_x(s\nu(x)) \in \Omega_1 \right\}\ &\text{if}\ x \not \in \Omega_1,
\\
\,\inf\,\left\{s \in \mathbf{R}^{+}|\ \operatorname{exp}_x(s\nu(x)) \not \in \Omega_1 \right\}\ &\text{if}\ x \in \Omega_1.
\end{cases}
\end{equation}
Then we have
\begin{equation}
|z(x)| \le Vh,
\end{equation}
where the constant $V$ depends only on $\kappa$, the $L^\infty$-norm of $f$, the ambient manifold $M$, and the $C^0$-norm of the second fundamental form of $\partial \Omega_0$.
\end{theorem}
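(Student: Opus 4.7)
The plan is to compare $u_0:=e^{-h\kappa\Delta_\xi}\mathbf{1}_{\Omega_0}$ with an explicit short-time expansion in the signed distance to $\partial\Omega_0$, and then show that for $V$ large enough, the two normal translates $\exp_x(\pm Vh\,\nu(x))$ of $x\in\partial\Omega_0$ lie on opposite sides of the threshold $\{u_0=\tfrac12+f\sqrt h\}$; this immediately yields $|z(x)|\le Vh$. The hypothesis $\operatorname{diam}(\Omega_0)<\operatorname{inj}(M)/2$ ensures that Fermi coordinates $(\sigma,\theta)$ are regular in a tubular neighborhood $T_\delta$ of $\partial\Omega_0$, where $\sigma(y)$ is the signed distance (positive outside $\Omega_0$) and $\theta(y)$ is the nearest-point projection to $\partial\Omega_0$. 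In these coordinates the volume element factors as $J(\sigma,\theta)\,d\sigma\,dS_\theta$ with $J(\sigma,\theta)=1-\sigma H(\theta)+O(\sigma^2)$, where $H$ denotes the mean curvature of $\partial\Omega_0$ with respect to the outer unit normal $\nu$, and $\xi(\sigma,\theta)=\xi(\theta)+\sigma\,\partial_\nu\xi(\theta)+O(\sigma^2)$, with $O(\sigma^2)$-constants depending only on $M$ and on the $C^0$-norm of the second fundamental form of $\partial\Omega_0$.

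My target is the expansion
\[
u_0(y)=\tfrac12-c_1\frac{\sigma(y)}{\sqrt h}-c_2\sqrt h\,H_\xi(\theta(y))+R(y),\qquad |R(y)|\lesssim h+\frac{\sigma(y)^2}{h},
\]
uniform for $y\in T_{\sqrt h\log(1/h)}$, where $H_\xi:=H+\partial_\nu\log\xi$ is the weighted mean curvature and $c_1,c_2>0$ depend only on $\kappa$. I would start from $u_0(y)=\int_{\Omega_0}K(\kappa h,y,z)\,\xi(z)\,d\volm(z)$, with $K$ the heat kernel of $\Delta_\xi$ on $M$, and proceed in three steps: (i) truncate the integration to $\{d(y,z)<\sqrt h\log(1/h)\}$ using the Varadhan off-diagonal decay of $K$, with error $O(h^N)$ for any $N$; (ii) replace $K(\kappa h,\cdot,\cdot)$ by the Euclidean Gaussian in normal coordinates centered at $\theta(y)$ via a Minakshisundaram--Pleijel expansion, up to $O(h)$; (iii) integrate the Gaussian in Fermi coordinates against the first-order expansions of $J$ and $\xi$. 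The two $\sqrt h$-contributions produced in step (iii) combine into the single geometric scalar $-c_2\sqrt h\,H_\xi$.

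With the expansion in hand, fix $x\in\partial\Omega_0$ and set $y_\pm:=\exp_x(\pm Vh\,\nu(x))$, so that $\sigma(y_\pm)=\pm Vh+O(V^2h^2)$ and $\theta(y_\pm)=x+O(V^2h^2)$. Since $Vh\ll\sqrt h$, plugging into the expansion yields
\[
u_0(y_\pm)-\tfrac12=\mp c_1 V\sqrt h-c_2\sqrt h\,H_\xi(x)+O(h).
\]
Choosing $V>c_1^{-1}(\|f\|_\infty+c_2\|H_\xi\|_\infty)+1$, for all sufficiently small $h$ one obtains $u_0(y_-)>\tfrac12+f(x)\sqrt h>u_0(y_+)$, hence $y_-\in\Omega_1$ and $y_+\notin\Omega_1$. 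Unpacking the definition of $z(x)$ in the two cases $x\in\Omega_1$ and $x\notin\Omega_1$ yields $|z(x)|\le Vh$, and the resulting $V$ depends only on $\kappa$, $\|f\|_\infty$, $M$, and the $C^0$-norm of the second fundamental form of $\partial\Omega_0$, as claimed.

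The hard part is establishing the refined expansion with an error strictly better than $\sqrt h$, uniformly in the base point on $\partial\Omega_0$. Three a priori independent sources contribute at order $\sqrt h$: the bending of the Fermi Jacobian $J$, the first-order variation of the weight $\xi$, and the non-Euclidean short-time correction of the heat kernel on $M$. The argument hinges on separating these cleanly -- showing that the first two combine exactly into $-c_2\sqrt h\,H_\xi$ while the third is absorbed into the $O(h)$ remainder. Once this calibrated expansion is available on the tube of thickness $\sqrt h\log(1/h)$, the rest of the proof reduces to the strict monotonicity in $\sigma$ of the leading term $-c_1\sigma/\sqrt h$, evaluated on the much thinner tube of thickness $Vh$ that actually matters for the conclusion.
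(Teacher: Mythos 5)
Your strategy is sound and would prove the theorem, but it is genuinely different from the paper's. You establish a \emph{two-sided, uniform} consistency expansion of $u_0=e^{-h\kappa\Delta_\xi}\mathbf{1}_{\Omega_0}$ in a tube around $\partial\Omega_0$ and then test it at the two normal translates $\exp_x(\pm Vh\,\nu(x))$, so that the strict monotonicity of the leading term $-c_1\sigma/\sqrt h$ forces the level set $\{u_0=\tfrac12+f\sqrt h\}$ to cross between them. The paper instead localizes at the single crossing point $\hat x:=\exp_x(z(x)\nu(x))$, where the threshold identity holds exactly, uses the Gaussian bounds to get the a priori localization $d_M(\hat x,\partial\Omega_0)\lesssim\sqrt h$, transfers everything to exponential coordinates via the parametrix expansion, and reduces to a purely Euclidean statement (Proposition~\ref{prop:thresh_one_step_euclid}); there, writing $\partial\Omega_0$ as a graph and expanding the Gaussian integral yields a one-sided inequality of the form $\omega h-(q_2h+q_3h^2+q_4z^2)\ge z+q_1z^3/h$, which is closed by importing an a priori improvement $z=o(\sqrt h)$ from an external lemma of Fuchs--Laux to absorb the cubic term. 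Your route buys a self-contained argument with no bootstrap and no external lemma, and it exhibits the geometrically meaningful coefficient $H_\xi=H+\partial_\nu\log\xi$; the paper's route buys a much lighter analytic burden, since it only needs an inequality at one point rather than a calibrated expansion uniform over the boundary.

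One caveat: the refined expansion you aim for --- with the exact coefficient $-c_2\sqrt h\,H_\xi(\theta(y))$ and a remainder $O(h)$ --- costs more regularity than the statement allows. The $O(h)$ remainder absorbs the cubic deviation of $\partial\Omega_0$ from its second-order tangent paraboloid, whose size is governed by the \emph{derivative} of the second fundamental form, whereas the theorem asserts that $V$ depends only on its $C^0$-norm. This is repairable, because your endgame never uses the exact value of the $\sqrt h$-order coefficient: all you need is $|u_0(y)-\tfrac12+c_1\sigma(y)/\sqrt h|\le C\sqrt h$ for $|\sigma(y)|\le Vh$ with $C$ controlled by $\|A\|_{C^0}$, $\|\xi\|_{C^1}$ and $M$, and this weaker bound follows from the Taylor-with-remainder representation $\zeta(v)=\zeta_1(v)|v|^2$ with $\|\zeta_1\|_\infty\le\tfrac12\|D^2\zeta\|_\infty$ (exactly as in the paper's Euclidean proposition). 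So you should drop the "calibrated" separation into $H_\xi$ --- which you correctly identify as the hard part, but which is not needed --- and replace it by a crude $O(\sqrt h)$ bound on everything beyond the leading monotone term.
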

\begin{corollary}\label{corollary:thresholding_ball}
Let $x_0 \in M$ and $R < \frac{\operatorname{inj}(M)}{4}$ be fixed. Then there is a constant $C_R < +\infty$ such that if $\frac{R}{2} < r \le R$ and, in the above theorem, $\Omega_0 = B_r(x_0)$, then
\begin{equation}
|z(x)| \le C_Rh
\end{equation}
for every $x \in \partial B_r(x_0)$.
\end{corollary}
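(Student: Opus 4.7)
The plan is to reduce the corollary directly to Theorem~\ref{thm:thresholding_one_step}, with the only real work being the verification that the constant $V$ appearing there can be chosen uniformly over the family of geodesic balls $\{B_r(x_0)\}_{R/2 < r \le R}$. First I would check that the hypotheses of the theorem are met: since $r \le R < \operatorname{inj}(M)/4$, the exponential map $\exp_{x_0}$ is a diffeomorphism on a ball of radius larger than $r$, so $B_r(x_0)$ is a smooth open set, and $\operatorname{diam}(B_r(x_0)) \le 2r \le 2R < \operatorname{inj}(M)/2$, which is exactly the diameter assumption in Theorem~\ref{thm:thresholding_one_step}.

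Next, inspecting the statement of Theorem~\ref{thm:thresholding_one_step}, the constant $V$ depends only on $\kappa$, $\|f\|_{L^\infty}$, the ambient manifold $M$, and the $C^0$-norm of the second fundamental form of $\partial\Omega_0$. The first three quantities are fixed once and for all by the corollary's setting, so the only $r$-dependence enters through the second fundamental form $\mathrm{II}_r$ of the geodesic sphere $\partial B_r(x_0)$. The plan is to obtain a uniform bound on $\|\mathrm{II}_r\|_{C^0}$ for $r \in (R/2, R]$.

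This uniform bound is standard via the Jacobi field / Riccati equation for the shape operator along radial geodesics emanating from $x_0$. On a closed manifold the sectional curvatures are bounded, say $|\mathrm{sec}| \le \Lambda$, and provided $r$ stays bounded away from both $0$ and $\operatorname{inj}(M)$ the principal curvatures of $\partial B_r(x_0)$ admit a comparison of the form $\tfrac{1}{r} + O(r)$ with constants depending only on $\Lambda$. Since $r$ ranges in the compact interval $[R/2, R]$ which is strictly contained in $(0, \operatorname{inj}(M)/4)$, we obtain
\begin{equation}
\sup_{r \in (R/2, R]} \|\mathrm{II}_r\|_{C^0(\partial B_r(x_0))} \le \tilde C_R < +\infty,
\end{equation}
where $\tilde C_R$ depends only on $R$ and on the geometry of $M$. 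The mildly subtle point here — and the one I would be most careful about — is that the comparison is only valid below the injectivity radius, which is precisely why the assumption $R < \operatorname{inj}(M)/4$ is imposed; this keeps us away from cut points of $x_0$ where geodesic spheres may cease to be smooth or have blowing-up curvatures.

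Finally, plugging this uniform bound on the second fundamental form into Theorem~\ref{thm:thresholding_one_step} yields a constant $V = V(\kappa, \|f\|_{L^\infty}, M, \tilde C_R)$ that works simultaneously for every $r \in (R/2, R]$. Setting $C_R := V$ gives the claimed estimate $|z(x)| \le C_R h$ for every $x \in \partial B_r(x_0)$ and every admissible $r$, completing the argument.
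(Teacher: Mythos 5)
Your proposal is correct and follows essentially the same route as the paper: both reduce the corollary to Theorem~\ref{thm:thresholding_one_step} and observe that the only $r$-dependence of the constant enters through the $C^0$-norm of the second fundamental form of $\partial B_r(x_0)$, which must be bounded uniformly for $r \in (R/2, R]$. The only difference is in how that uniform bound is justified: you invoke a quantitative Riccati/Jacobi-field comparison, while the paper simply notes that $(r, x_0) \mapsto \Vert S_{r,x_0}\Vert_{C^0}$ is continuous and hence bounded on the compact set $\{R/2 \le r \le R\} \times M$; both justifications are valid.
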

Finally, we have the following consistency result, which will be crucial in proving Theorem~\ref{thm:conditional_conv}.
\begin{theorem}\label{thm:consist}
Let $h_n$ be a sequence of positive real numbers converging to zero. Assume that $\psi_{h_n}: (0, +\infty) \times M \to \mathbf{R}$ are $C^{1,2}((0, +\infty) \times M)$ functions converging in $C^{1,2}((0, +\infty) \times M)$ to a function $\psi: (0, +\infty) \times M \to \mathbf{R}$. Assume that $(s_{h_n}, z_{h_n}) \in (0, +\infty) \times M$ are converging to a point $(s, z) \in [0, +\infty) \times M$. Assume also that $\delta_n := \psi_{h_n}(s_{h_n}, z_{h_n})$ are such that
\begin{align}\label{eq:delta_conv_rate}
\lim_{n \to +\infty} \frac{\delta_{n}}{\sqrt{h_n}} = 0.
\end{align}
Then we have that:
\begin{enumerate}[(i)]
\item If $D\psi(s, z) \neq 0$ then
\begin{align}
\liminf_{n \to +\infty} &\frac{1}{\sqrt{\kappa h_n}}\bigg(\frac{1}{2} - \int_{\left\{\psi_{h_n}(t_{h_n} - h_n, \cdot) \ge 0\right\}} H(\kappa h_n, z_{h_n}, y) \xi(y)d\volm\bigg)
\\ & \ge \frac{1}{2\sqrt{\pi}|D\psi(s, z)|}\bigg( \partial_t\psi - \bigg\langle g - \frac{D\psi \otimes D\psi}{|D\psi|^2} ,D^2\psi\bigg\rangle -  g\left( \frac{\nabla\xi}{\xi}, \nabla\psi\right)\bigg)(s, z).\label{eq:claim_non_deg}
\end{align}\label{item:nondeg_grad}
\item Otherwise if $D\psi(s,z) = 0, D^2\psi(s, z) = 0$ and
\begin{equation}\label{eq:claim_deg}
\frac{1}{2} - \int_{\left\{\psi_{h_n}(t_{h_n} - h_n, \cdot) \ge 0\right\}} H(\kappa h_n, z_{h_n}, y) \xi(y)d\volm \le o(\sqrt{h_n}),
\end{equation}
then 
\begin{equation}
\partial_t \psi(s,z) \le 0.
\end{equation}\label{item:deg_grad}
\end{enumerate}
\end{theorem}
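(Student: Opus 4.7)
The plan is to adapt the Barles--Georgelin consistency analysis to the weighted Riemannian setting. I work in normal geodesic coordinates at $z_{h_n}$ and perform the natural rescaling $y = \exp_{z_{h_n}}(\sqrt{\kappa h_n}\,w)$, which makes the heat kernel concentrate on the $O(1)$ scale in the new variable $w\in T_{z_{h_n}}M$. Using the parametrix for the weighted heat operator recorded in the appendix, the Taylor expansion of $\xi$, and Gauss's lemma for the volume form, the pulled-back measure admits the expansion
\begin{equation*}
H(\kappa h_n, z_{h_n}, y)\,\xi(y)\,d\volm(y) = (4\pi)^{-k/2}e^{-|w|^2/4}\bigl[1 + \sqrt{\kappa h_n}\,\nabla\log\xi(z_{h_n})\cdot w + O(h_n(1+|w|^2))\bigr]\,dw
\end{equation*}
uniformly on bounded subsets of $T_{z_{h_n}}M$, with Gaussian decay outside. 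The only odd-in-$w$ correction at order $\sqrt{h_n}$ comes from $\xi$: the linear part of $\sqrt{\det g}$ vanishes by Gauss's lemma, and the sub-leading parametrix coefficient has no linear term in normal coordinates.

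For case (i), I Taylor-expand $\psi_{h_n}(s_{h_n}-h_n,\exp_{z_{h_n}}(\sqrt{\kappa h_n}w))$ around $(s_{h_n}, z_{h_n})$ to second spatial and first temporal order, using the $C^{1,2}$-convergence to replace $\partial_t\psi_{h_n}, \nabla\psi_{h_n}, D^2\psi_{h_n}$ at $(s_{h_n},z_{h_n})$ by their limits plus $o(1)$. In an orthonormal frame with $e_1 = D\psi(s,z)/|D\psi(s,z)|$ and $w=(\tau, w^\perp)$, the condition $\psi_{h_n}(s_{h_n}-h_n,\cdot)\geq 0$ rearranges to $\tau\geq F_n(w^\perp)$ modulo $o(\sqrt{h_n})$, with
\begin{equation*}
F_n(w^\perp) = \frac{-\delta_n + h_n\partial_t\psi(s,z) - \tfrac{\kappa h_n}{2}(w^\perp)^T D^2\psi(s,z)\, w^\perp}{\sqrt{\kappa h_n}\,|D\psi(s,z)|}.
\end{equation*}
Because $\delta_n/\sqrt{h_n}\to 0$ one has $F_n=o(1)$, so $\int_{F_n}^\infty e^{-\tau^2/4}/(2\sqrt{\pi})\,d\tau = \tfrac{1}{2} - F_n/(2\sqrt{\pi}) + O(F_n^3)$. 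Integrating against the transverse Gaussian via the moment identity $\mathbb{E}[(W^\perp)^T A\, W^\perp] = 2(\operatorname{tr}(A) - e_1^T A e_1)$, and pairing $\mathbf{1}_{\tau\geq F_n}$ with the density correction $\sqrt{\kappa h_n}\,\nabla\log\xi\cdot w$ (whose $\tau$-first-moment produces an $O(\sqrt{h_n})$ drift contribution), one arrives at
\begin{equation*}
\tfrac{1}{\sqrt{\kappa h_n}}\Bigl(\tfrac{1}{2}-I_n\Bigr) = \tfrac{-\delta_n}{2\sqrt{\pi}|D\psi|\,\kappa h_n} + \tfrac{1}{2\sqrt{\pi}|D\psi|}\Bigl(\partial_t\psi - \bigl\langle g - \tfrac{D\psi\otimes D\psi}{|D\psi|^2}, D^2\psi\bigr\rangle - g\!\bigl(\tfrac{\nabla\xi}{\xi},\nabla\psi\bigr)\Bigr)(s,z) + o(1),
\end{equation*}
where in the normal frame $g=I$ and $\operatorname{tr}(D^2\psi)$ equals the Laplace--Beltrami of $\psi$ at $z$. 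The leftover $-\delta_n/(\kappa h_n)$ contribution has indeterminate sign in general but always contributes non-negatively to the liminf, so dropping it yields \eqref{eq:claim_non_deg}; this is where passing to the liminf (rather than the limit) matters.

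For case (ii), I argue by contradiction, supposing $\partial_t\psi(s,z)>0$. By $C^{1,2}$-convergence both $\nabla\psi_{h_n}(s_{h_n}, z_{h_n})$ and $D^2\psi_{h_n}(s_{h_n}, z_{h_n})$ tend to zero, and Taylor expansion on $B_{c\sqrt{h_n}}(z_{h_n})$ yields $\psi_{h_n}(s_{h_n}-h_n, y) \leq \delta_n - h_n\partial_t\psi(s,z) + o(\sqrt{h_n})$, which in the relevant regime $\delta_n\leq h_n\partial_t\psi(s,z)/2$ is bounded above by $-c'h_n<0$ for large $n$. Consequently $\{\psi_{h_n}(s_{h_n}-h_n,\cdot)\geq 0\}$ misses $B_{c\sqrt{h_n}}(z_{h_n})$, so $I_n$ is bounded by the Gaussian mass outside this ball, which is $\leq e^{-c^2/4}\ll 1/2$; this contradicts the hypothesis $I_n \geq 1/2 - o(\sqrt{h_n})$. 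The principal technical obstacle in the whole argument is the careful bookkeeping of the four distinct sources of $\sqrt{h_n}$-order corrections -- those coming from $\psi_{h_n}$, from $\xi$, from $\sqrt{\det g}$, and from the sub-leading coefficient of the weighted heat kernel parametrix -- and verifying that they reassemble precisely into the coordinate-invariant MCF$_\xi$ operator $\langle g - D\psi\otimes D\psi/|D\psi|^2, D^2\psi\rangle + g(\nabla\xi/\xi,\nabla\psi)$; this assembly is ultimately a manifestation of the fact that MBO is, to leading order, a gradient descent of the weighted area functional.
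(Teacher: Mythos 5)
Your overall strategy---rescale in normal coordinates at $z_{h_n}$, isolate the $O(\sqrt{h_n})$ correction to the Gaussian coming from the weight, and Taylor-expand the level set---is the same as the paper's, which splits the integral into a flat Gaussian part $\mathbb{I}_n$ and a density correction $\mathbb{II}_n$ and then delegates the flat part to \cite[Proposition 4.1]{Barles1995}. However, two of your steps do not hold as written. The more serious one is the disposal of the $-\delta_n/(\kappa h_n)$ term in case (i): you claim it ``always contributes non-negatively to the liminf,'' but to drop a summand $b_n$ from $\liminf(a_n+b_n)$ you need $\liminf b_n\ge 0$, i.e.\ $\limsup \delta_n/h_n\le 0$. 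Hypothesis \eqref{eq:delta_conv_rate} only gives $\delta_n=o(\sqrt{h_n})$, which permits $\delta_n=h_n^{2/3}>0$, in which case $-\delta_n/(\kappa h_n)\to-\infty$ and your liminf bound collapses; this is precisely the regime in which the theorem is applied in the proof of Theorem~\ref{thm:conditional_conv} (there $\delta_j=h_{n_j}^{2/3}$). So the argument fails exactly where the result is needed. This is a genuinely delicate point---the paper does not resolve it by a sign argument either, but by asserting that the proof of \cite[Proposition 4.1]{Barles1995} survives the modification $\phi_h(t_h,x_h)=o(\sqrt h)$---and it cannot be waved away with ``passing to the liminf matters.''

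In case (ii), the bound $\psi_{h_n}(s_{h_n}-h_n,y)\le\delta_n-h_n\partial_t\psi(s,z)+o(\sqrt{h_n})$ on $B_{c\sqrt{h_n}}(z_{h_n})$ cannot be upgraded to $\le -c'h_n$: the gradient contribution $D\psi_{h_n}(s_{h_n},z_{h_n})\cdot(y-z_{h_n})$ is only $o(1)\cdot\sqrt{h_n}$, which dominates $h_n$, so negativity does not follow from $\partial_t\psi(s,z)>0$ alone; and the regime $\delta_n>h_n\partial_t\psi(s,z)/2$ is not treated at all. One must actually use \eqref{eq:claim_deg} to control the linear and quadratic terms, which is the content of the degenerate case of \cite[Proposition 4.1]{Barles1995}. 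Finally, a bookkeeping error in exactly the place you flag as the principal obstacle: since $H(\kappa h_n,z_{h_n},y)\,\xi(y)\,d\volm(y)=G_{\kappa h_n}\,v_0\,\xi\,\sqrt{\det g}\,dy$ and $v_0$ itself carries the factor $(\xi(y))^{-1/2}$, the net odd correction is $\tfrac{1}{2}\sqrt{\kappa h_n}\,\nabla\log\xi(z_{h_n})\cdot w$, not $\sqrt{\kappa h_n}\,\nabla\log\xi(z_{h_n})\cdot w$; with the coefficient you display, the drift term in your final identity would come out twice as large as the $g(\nabla\xi/\xi,\nabla\psi)$ term you state.
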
 
\section{Proofs}\label{sec:proofs}

\subsection{Conditional convergence: Proof of Theorem~\ref{thm:conditional_conv}}

The purpose of this section is the proof of Theorem~\ref{thm:conditional_conv}, which is inspired by the works \cite{Barles1995} and \cite{Misiats2016}.

\begin{proof}[Proof of Theorem~\ref{thm:conditional_conv}]
Let $u$ be the unique viscosity solution to MCF$_\xi$ from Theorem \ref{thm:existence_viscosity} with $\xi = \rho^2$, starting from $u(0, \cdot) = sd(\cdot, \Gamma_0) := d_M(x, \Omega_0^c) - d_M(x, \Omega_0)$. We will show later that $u^*$ and $u_*$ are, respectively, a viscosity subsolution and a viscosity supersolution of the level set formulation of $\text{MCF}_{\xi}$ according to Definition~\ref{def:visc_sol}. We furthermore claim that for every $x \in M$,
\begin{align}
&\begin{aligned}\label{eq:initial_cond:sub}
u^*(0, x) \le \operatorname{sign}^*(u(0,x)),
\end{aligned}
\\&\begin{aligned}\label{eq:initial_cond_sup}
u_*(0, x) \ge \operatorname{sign}_*(u(0,x)),
\end{aligned}
\end{align}
where $\operatorname{sign}^*$ and $\operatorname{sign}_*$ are, respectively, the upper semi-continuous envelope and the lower semi-continuous envelope of the sign function. 

Once these facts are proved, it follows from Theorem~\ref{thm:comparison} that for every $x \in M$ and every $t \ge 0$,
\begin{align}
&\begin{aligned}\label{eq:after_comp_sub}
u^*(t, x) \le \operatorname{sign}^*(u(t,x)),
\end{aligned}
\\&\begin{aligned}\label{eq:after_comp_sup}
u_*(t, x) \ge \operatorname{sign}_*(u(t,x)).
\end{aligned}
\end{align}
To see this, we observe that if $\Psi: \mathbf{R} \to \mathbf{R}$ is a continuous function such that $\Psi \ge \operatorname{sign}^*$, then the relabeling property in Lemma~\ref{lem:relab} implies that $\Psi \circ u$ is a continuous solution to \eqref{eq:level_set_equation} with $u^*(0, x) \le \operatorname{sign}^*(u(0,x)) \le \Psi(u(0,x))$ for every $x \in M$, thus Theorem~\ref{thm:comparison} implies that for every $0 \le t \le T$ and every $x \in M$
\begin{equation}
u^*(t, x) \le \inf_{\Psi \in C(\mathbf{R}), \Psi \ge \operatorname{sign}^*} \Psi(u(t,x)) = \operatorname{sign}^*(u(t, x)).
\end{equation} 
A similar argument gives \eqref{eq:after_comp_sup}. Let us now conclude the proof of the theorem assuming that \eqref{eq:after_comp_sub} and \eqref{eq:after_comp_sup} hold. If $x \in \Omega_t$, then $u(t, x) > 0$, thus \eqref{eq:after_comp_sup} yields $u_*(t,x) = 1$. In a similar way \eqref{eq:after_comp_sub} implies that $u^*(t,x) = -1$ on $(\Omega_t \cup \Gamma_t)^c$. We are thus left with proving that $u^*$ is a subsolution, that $u_*$ is a supersolution and with verifying the initial conditions \eqref{eq:initial_cond:sub} and \eqref{eq:initial_cond_sup}.

We now show that indeed $u^*$ is a viscosity subsolution. Pick a test functions $\varphi$ tangent to $u^*$ at $(t_0, x_0) \in (0, +\infty) \times M$ from above. We may assume without loss of generality that
\begin{equation}\label{eq:decay_in_time}
\lim_{t\to +\infty} \max_{M} \varphi(t, \cdot) = +\infty,
\end{equation}
and that $u^*-\varphi$ has a strict global maximum at $(t_0, x_0)$. Thanks to Proposition~\ref{prop:equivalent_visc}, we only need to check that
\begin{enumerate}
\item Either $D\varphi(t_0,x_0) \neq 0$ and 
\begin{equation}
\partial_t\varphi \le \bigg\langle g - \frac{D\varphi \otimes D\varphi}{|D\varphi|^2}, D^2\varphi\bigg\rangle + g\left(\frac{\nabla \xi}{\xi}, \nabla \varphi\right)\  \text{at}\ (t_0, x_0).
\end{equation}
\item Or $D\varphi(t_0, x_0) = 0$, $D^2\varphi(t_0,x_0) = 0$ and
\begin{equation}
\partial_t \varphi (t_0,x_0) \le 0.
\end{equation}
\end{enumerate}
If $(t_0, x_0) \in \{ u^* = -1 \}$ or $(t_0, x_0) \in \operatorname{Int}\{ u^* = 1 \}$ the claim is trivial, because in that case $u^*$ is constant in a neighborhood of $(t_0, x_0)$. We thus assume that $(t_0, x_0) \in \partial \{u^* = 1\}$. By definition, there exists a sequence $(t_{n_j}, z_{n_j})$ such that $z_{n_j} \in G_{n_j}$ for every $j \in \mathbf{N}$ and, as $j \to +\infty$,

\begin{align*}
n_j &\to +\infty,
\\ z_{n_j} &\to x_0,
\\ t_{n_j} &\to t_0,
\\ u^{n_j, G_{n_j}}(t_{n_j}, z_{n_j}) &\to u^*(t_0, x_0).
\end{align*}
For every $j \in \mathbf{N}$, pick

\begin{equation}\label{eq:choice_mac}
(s_j, x_j) \in \operatorname{argmax}_{x \in G_{n_j}, s \in (0, +\infty)} \left\{u^{n_j, G_{n_j}}(s, x)  - \varphi(s,x) \right\}.
\end{equation}
We observe that, up to extracting a subsequence, $(s_j, x_j) \to (t_0, x_0)$ as $j \to +\infty$. Indeed by the compactness of $M$ and the assumption \eqref{eq:decay_in_time}, we may assume that the sequence $(s_j, x_j)$ converges to some limit point $(\underline{s}, \underline{x})$. Then by definition of $u^*$, by the choice \eqref{eq:choice_mac} and by the properties of the points $(t_{n_j}, z_{n_j})$ we must have

\begin{align*}
(u^*-\varphi)(\underline{s}, \underline{x}) & \ge \limsup_{j \to +\infty} \ (u^{n_j, G_{n_j}} - \varphi)(s_j, x_j)
\\ &\ge \limsup_{j \to +\infty}\ (u^{n_j, G_{n_j}} - \varphi)(t_{n_j}, z_{n_j})
\\ &=(u^* - \varphi)(t_0, x_0).
\end{align*}
This forces $(t_0, x_0) = (\underline{s}, \underline{x})$, because $(t_0, x_0)$ is a strict global maximum for $u^* - \varphi$. It is also easy to check that $u^{n_j, G_{n_j}}(s_j, x_j) = 1$ for $j$ large enough. We now pick a sequence $\delta_j \downarrow 0$ to be determined later, and we define $\theta_{j}: \mathbf{R} \to [-1, 1]$ to be a smooth function such that

\begin{align*}
&\theta_j(t) = \operatorname{sign}(t)\ \text{for}\ |t| \ge \delta_{j},
\\ &\Vert \theta_j' \Vert_{\infty} \le \frac{2}{\delta_{j}}.
\end{align*}
We claim that
\begin{equation}\label{eq:first_claim}
u^{n_j, G_{n_j}}(s, z) \le \theta_j(\varphi(s, z) - \varphi(s_j, x_j) + \delta_j)
\end{equation}
for every $j$ large enough, $z \in G_{n_j}$ and $s \in (0, +\infty)$. Indeed, inequality \eqref{eq:first_claim} holds trivially if $u^{n_j, G_{n_j}}(s, z) = -1$. If instead $u^{n_j, G_{n_j}}(s, z) = 1$, probing \eqref{eq:choice_mac} with $(s, z)$, we have
\begin{align*}
1 &= u^{n_j, G_{n_j}}(s, z) \le u^{n_j, G_{n_j}}(s_j, x_j) - \varphi(s_j, x_j) + \varphi(s, z)
 \\ &= 1 - \varphi(s_j, x_j) + \varphi(s, z),
\end{align*}
where we used that $u^{n_j, G_{n_j}}(s_j, x_j) = 1$ for $j$ large enough. In particular
\begin{equation}
0 \le - \varphi(s_j, x_j) + \varphi(s, z),
\end{equation}
which, by definition of $\theta_j$, yields \eqref{eq:first_claim}. 

We now choose $s = s_j - h_{n_j}$ in \eqref{eq:first_claim}, we apply $ S_{n_j}(h_{n_j}, \cdot)$ to both sides of the inequality and we evaluate at $x_j$. Recalling assumption \eqref{item:monotonicity} of Theorem~\ref{thm:conditional_conv} we get
\begin{align*}
& S_{n_j}(h_{n_j},u^{n_j, G_{n_j}}(s_j - h_{n_{j}}, \cdot))(x_j)
\\ & \le S_{n_j}(h_{n_j}, \theta_j(\varphi(s_j - h_{n_j}, \cdot) - \varphi(s_j, x_j) + \delta_j))(x_j) + O\left(h_{n_j}^{3/2}\right).
\end{align*}
We now apply $\operatorname{sign}^*$ to both sides of the inequality to get
\begin{align*}
1 = u^{n_j, G_{n_j}}(s_j, x_j) \le \operatorname{sign}^*\left( S_{n_j}(h_{n_j}, \theta_j(\varphi(s_j - h_{n_j}, \cdot) - \varphi(s_j, x_j) + \delta_j))(x_j) + O\left(h_{n_j}^{3/2}\right) \right),
\end{align*}
which, by definition of the function $\operatorname{sign}^*$, implies
\begin{align*}
0 \le S_{n_j}(h_{n_j}, \theta_j(\varphi(s_j - h_{n_j}, \cdot) - \varphi(s_j, x_j) + \delta_j))(x_j) + O\left(h_{n_j}^{3/2}\right).
\end{align*}
We now divide both sides of the previous inequality by $2$ and we add $1/2$ to both sides of the inequality. Using assumption \eqref{item:approx_one} of Theorem~\ref{thm:conditional_conv} and the linearity of $S_n$ in the second variable yields
\begin{align*}
\frac{1}{2} \le S_{n_j}\left(h_{n_j}, \left( \frac{1 + \theta_j}{2}\right)\bigg(\varphi(s_j - h_{n_j}, \cdot) - \varphi(s_j, x_j) + \delta_j\bigg)\right)(x_j) + O\left(h_{n_j}^{3/2}\right).
\end{align*}
Define
\begin{align*}
f_j(z) := \left( \frac{1 + \theta_j}{2}\right)\bigg(\varphi(s_j - h_{n_j}, z) - \varphi(s_j, x_j) + \delta_j\bigg).
\end{align*}
Then by applying the estimate \eqref{eq:needed_heat_est} in assumption \eqref{item:estimate} in Theorem~\ref{thm:conditional_conv} we obtain
\begin{equation}
\frac{1}{2} \le (e^{-h_{n_j}\kappa\Delta_{\xi}}f_j)(x_j) + o(h_{n_j}^{1/2}) + \frac{2}{\delta_j}O(h_{n_j}^{3/2}).
\end{equation}
In other words, we have
\begin{align*}
o\left(h_{n_j}^{1/2}\right) + \frac{2}{\delta_j}O\left(h_{n_j}^{3/2}\right) & \ge \frac{1}{2} - \int_M H(h_{n_j}\kappa, x_j, y)f_j(y)\xi(y)d\volm(y)
\\ & \ge \frac{1}{2} - \int_{\{\varphi(s_j - h_{n_j}, \cdot) - \varphi(s_j, x_j) + \delta_j \ge 0\}} H(h_{n_j}\kappa, x_j, y)\xi(y)d\volm(y).
\end{align*}
We divide the previous inequality by $\sqrt{h_{n_j}\kappa}$, and we choose $\delta_j = h_{n_j}^{2/3}$ so that on the one hand $\frac{h_{n_j}}{\delta_j} \to 0$ and on the other hand we can apply Theorem~\ref{thm:consist}. If $D\varphi(t_0, x_0) \neq 0$, then by \eqref{item:nondeg_grad} in Theorem~\ref{thm:consist},
\begin{align*}
0 \ge  \frac{1}{2\sqrt{\pi}|D\psi(s, z)|}\bigg( \partial_t\psi - \bigg\langle g - \frac{D\psi \otimes D\psi}{|D\psi|^2}, D^2\psi\bigg\rangle -  g\left( \frac{D\xi}{\xi}, D\psi\right)\bigg)(t_0, x_0),
\end{align*}
which gives \eqref{item:first_viscosity} in Definition~\ref{def:visc_sol}. If $D\varphi(t_0, x_0) = 0$ and $D^2\varphi(t_0, x_0) = 0$ then we can apply \eqref{item:deg_grad} in Theorem~\ref{thm:consist} to get the second item in the equivalent description of viscosity subsolution in Proposition~\ref{prop:equivalent_visc}. Thus $u^*$ is a viscosity subsolution. In a similar way one can prove that $u_*$ is a supersolution. 

We are left with checking the initial conditions for $u^*$ and $u_*$. Again, we focus on the inequality \eqref{eq:initial_cond:sub} for $u^*$, since the argument for $u_*$ is similar. Observe that
\begin{equation}
\operatorname{sign}^*(u(0,x)) = \begin{cases}
1\ &\text{if}\ x \in \overline{\Omega_0}
\\
-1\ &\text{if}\ x \in M\setminus \overline{\Omega_0}
\end{cases}
\end{equation}
and since $u^* \in \{-1, 1\}$, we just have to show that $u^*(0,x) = -1$ for $x \in M \setminus \overline{\Omega_0}$. To this aim, pick a sequence $(t_n, z_n) \in (0,+\infty) \times G_n$ such that $t_n \to 0$ and $z_n \to x$ as $n \to +\infty$. We have to show that $u^{n, G_n}(t_n, z_n) = -1$ for $n$ large enough. For $q \in \mathbf{R}$, denote by $T^{q, G_n}(h_n)(\Omega_0)$ the outcome of the abstract thresholding scheme with thresholding value given by $q$ and step size $h_n$ on the graph $G_n$ with initial value $\Omega_0 \cap V_n$. For $m \in \mathbf{N}$ we also write $(T^{q, G_n}(h_n))^m$ for $T^{q, G_n}(h_n) \circ . . .  \circ T^{q, G_n}(h_n)$. Since $x \in M \setminus \overline{\Omega_0}$ there exists $R > 0$ such that $B_{R}(x) \subset M \setminus \overline{\Omega_0}$. We denote by $w_n: V_n \to [0, +\infty)$ a sequence of non-negative functions which, for $n$ large enough and for every $u, v \in \mathcal{V}_n, |u| \le 1, |v| \le 1$, satisfy
\begin{equation}\label{eq:also_this}
u \le v \Rightarrow S(h_n, u) \le S(h_n, v) + w_n,
\end{equation}
\begin{equation}
a_n := \Vert w_n \Vert_{L^\infty(G_n)} = O(h_n^{3/2}),
\end{equation}
\begin{equation}\label{eq:this_we_need}
\max_{x \in V_n}|S(h_n, \mathbf{1}_{G_n})(x) - 1| < a_n.
\end{equation}
Such functions exist by assumptions \eqref{item:monotonicity} and \eqref{item:approx_one} in Theorem~\ref{thm:conditional_conv}. We now check that
\begin{equation}\label{eq:inclusion}
V_n\setminus \left(T^{1/2, G_n}(h_n)\right)^m({\Omega_0}) \supset \left(T^{1/2 + 2ma_n, G_n}(h_n)\right)^m(B_R(x)).
\end{equation}
To see this, we proceed by induction over $m$. We treat just the base case $m = 1$, the inductive step being analogous. To prove \eqref{eq:inclusion} for $m=1$, we show 
\begin{equation}\label{eq:theprovedinclusion}
V_n \setminus T^{1/2, G_n}(h_n)(\Omega_0) \supset T^{1/2 +a_n, G_n}(h_n)(M\setminus \Omega_0) \supset T^{1/2 + 2a_n, G_n}(h_n)(B_R(x)).
\end{equation}
To see this, let $y \in T^{1/2+ a_n, G_n}(h_n)(M\setminus \Omega_0)$, observe that by \eqref{eq:this_we_need} we have

\begin{equation}
S(h_n, \mathbf{1}_{\Omega_0})(y) +  \frac{1}{2} + a_n \le S(h_n, \mathbf{1}_{\Omega_0})(y) + S_n(h_n, \mathbf{1}_{M\setminus \Omega_0})(y) < 1 + a_n,
\end{equation}
in particular, we have that $y \in V_n \setminus T^{1/2, G_n}(h_n)(\Omega_0)$. Thus $ V_n \setminus T^{1/2, G_n}(h_n)(\Omega_0) \supset T^{1/2 + a_n, G_n}(M\setminus \Omega_0)$. We now observe that since $\mathbf{1}_{B_R(x)} \le \mathbf{1}_{M \setminus \Omega_0}$, \eqref{eq:also_this} yields that for $y \in T^{1/2+2a_n, G_n}(h_n)(B_R(x))$
\begin{equation}
\frac{1}{2} + 2a_n \le S(h_n, \mathbf{1}_{B_R(x)})(y) \le S(h_n, \mathbf{1}_{M\setminus \Omega_0})(y) +a_n,
\end{equation}
which yields \eqref{eq:theprovedinclusion}.

We will show that there is a constant $C < +\infty$ such that 
\begin{equation}\label{eq:claim_for_ic}
\left(T^{1/2 + 2\left[ \frac{t_n}{h_n}\right]a_n, G_n}(h_n)\right)^{\left[\frac{t_n}{h_n}\right]}(B_R(x)) \supset B_{R-Ct_n}(x) \cap V_n.
\end{equation}
Once this is proved, we have that using also \eqref{eq:inclusion}, since $t_n \downarrow 0$, 
\begin{equation}
M\setminus \left(T^{1/2, G_n}(h_n)\right)^{\left[ \frac{t_n}{h_n} \right]}({\Omega_0}) \supset B_{\frac{R}{2}}(x)
\end{equation}
when $n$ is large enough. In particular, since $z_n$ is converging to $x$, we must have that $u^{n, G_n}(t_n, z_n) = -1$ for $n$ large enough. Finally, to show \eqref{eq:claim_for_ic} we argue as follows. Let $C_R$ be the constant in Corollary \ref{corollary:thresholding_ball}. Let $f \in C^{\infty}_c(B_{R}(x))$ such that $\mathbf{1}_{B_{R-C_R h_n}(x)} \le f \le \mathbf{1}_{B_{R}(x)}$ with $\operatorname{Lip}(f) \le c/h_n$, using assumptions \eqref{item:monotonicity} and \eqref{item:estimate} in Theorem~\ref{thm:conditional_conv} we have for $y \in M \cap V_n$
\begin{align*}
S_n(h_n, \mathbf{1}_{B_R(x)})(y) & \ge S_n(h_n, f)(y) + O(h_n^{3/2})
\\ & \ge e^{-h_n\kappa\Delta_\xi}f(y) + O(h_n^{1/2})
\\ & \ge e^{-h_n\kappa\Delta_\xi}\mathbf{1}_{B_{R-C_Rh_n}(x) }(y)+ O(h_n^{1/2}).
\end{align*}
Observe that $\frac{1}{2} + 2\left[ \frac{t_n}{h_n} \right] a_n = \frac{1}{2} + O(h_n^{1/2})$, in particular, we can apply Corollary \ref{corollary:thresholding_ball} to obtain, for $n$ large enough, whenever $y \in B_{R-2C_Rh_n}(x) \cap V_n$
\begin{equation}
e^{-h_n\kappa\Delta_\xi}\mathbf{1}_{B_{R-C_Rh_n}(x) }(y)+ O(h_n^{1/2}) \ge\frac{1}{2} + 2\left[ \frac{t_n}{h_n} \right] a_n.
\end{equation}
By an induction argument we get \eqref{eq:claim_for_ic}.
\end{proof}

\subsection{Heat kernel estimate in random geometric graphs: Proof of Theorem~\ref{thm:estimate_random_graphs}}
The main purpose of this subsection is the proof of Theorem~\ref{thm:estimate_random_graphs}. We first introduce some notation. We denote by $\{\lambda_l\}_{l=1}^{+\infty}$ the eigenvalues of the weighted Laplacian $\Delta_{\rho^2}$ on the manifold $(M, g)$, which are ordered in the following way (recall that we are assuming that the eigenvalues are simple)
\begin{equation}
0 = \lambda_1 < \lambda_2 < \lambda_3 < . . . 
\end{equation}
We denote by $\{f_l\}_{l=1}^{+\infty}$ an orthonormal basis (with respect to the $L^2(\rho^2\volm)$-inner product on $M$) made of the corresponding eigenvectors. Then, for $x, y \in M$, the heat kernel on $M$ can be written as

\begin{equation}\label{eq:manifold_hk_expansion}
H(t, x, y) = \sum_{l=1}^{+\infty} e^{-t\lambda_l}f_l(x)f_l(y).
\end{equation}

\begin{proof}[Proof of Theorem~\ref{thm:estimate_random_graphs}]
As we pointed out in Remark~\ref{rem:remark_after_estimate}, in the present proof we will for simplicity assume that $K_n = \log(n)^q \in \mathbf{N}$. We will indicate by $\gamma$ the quantity $\gamma:= \inf_{i \in \mathbf{N}}(\lambda_{i+1}-\lambda_i)$, which is positive by Item \eqref{eq:cond_spect_gap} in Theorem~\ref{thm:estimate_random_graphs}.

Observe that items \eqref{item:monotonicity} and \eqref{item:approx_one} in Theorem~\ref{thm:conditional_conv} hold exactly (i.e.\ without error) for the choice $S_n(t, \cdot) = e^{-t\Delta_n}(\cdot)$. To show that these hold true with high probability also for the choice $S_n = P_n$ defined in \eqref{eq:definition_approximate_op} we take $w \in \mathcal{V}_n$ and we consider, for $x \in V_n$, the difference
\begin{align*}
\bigg|e^{-h_n\Delta_n}w(x) - P_n(h_n, w)(x)\bigg| &= \left| \sum_{y \in V_n} \sum_{l=K_n +1}^ne^{-h_n\lambda_n^l}v_n^l(x)v_n^l(y)\frac{d_n(y)}{n}w(y) \right|
\\ & \le n \max_{z \in V_n} |w(z)|  \max_{z \in V_n} |d_n(z)|\frac{1}{n}\max_{z \in V_n}\sum_{l =K_n + 1}^n e^{-h_n \lambda_n^l}(v_n^l(z))^2,
\end{align*}
where in the last line we used the Cauchy--Schwarz inequality. To get items \eqref{item:monotonicity} and  \eqref{item:approx_one} in Theorem~\ref{thm:estimate_random_graphs} for $P_n$, it thus suffices to show that 
\begin{equation}
\mathcal{R}_n :=  \max_{z \in V_n} d_n(z) \max_{z \in V_n}\frac{1}{n}\sum_{l =K_n + 1}^n e^{-h_n \lambda_n^l}(v_n^l(z))^2 = O\left( \frac{h_n^{3/2}}{n} \right).
\end{equation}
To show this, we start by observing that for every $n \in \mathbf{N}$, every $z \in V_n$ and $1 \le l \le n$
\begin{equation}\label{eq:pointwise_bound_ev_graph}
1 = \langle v_n^l, v_n^l \rangle_{\mathcal{V}_n} \ge \frac{d_{n}(z)}{n}(v_n^l(z))^2.
\end{equation}
By applying Theorem~\ref{thm:density_est} we can also choose $n$ so large that, with probability greater than $1-Q_6\epsilon_n^{-k}\operatorname{exp}(-Q_7n\epsilon_n^{k+2})$, we have
\begin{equation}
 \max_{z \in V_n} |d_{n}(z) -C_1\rho(z)| \le Q_8\epsilon_n,
\end{equation}
and we can clearly assume that $n$ is so large that
\begin{equation}
C_1\frac{\min \rho}{2} \le d_n \le 2C_1\max \rho.
\end{equation}
Using \eqref{eq:pointwise_bound_ev_graph} and the ordering $\lambda_n^l \ge \lambda_n^{K_n}$ for $n  \ge l \ge K_n$ we get
\begin{align*}
\mathcal{R}_n &\le \frac{C}{n}\left( n^2 e^{-\lambda_n^{K_n}h_n}\right)
\\ &= \frac{C}{n}\left( n^2 e^{-\kappa(\eta)\lambda_{K_n} h_n}e^{-\left(\lambda_n^{K_n} - \kappa(\eta)\lambda_{K_n}\right)h_n}\right).
\end{align*}
We now use Theorem~\ref{thm:eigv_eigf_est} and Theorem~\ref{thm:controlling_norm_ef} to infer that with probability greater than $1 - Q_1 \epsilon_n^{-6k}\operatorname{exp}(-Q_2n\epsilon_n^{k+4}) - Q_3n\operatorname{exp}(-Q_4n\left( \lambda_{K_n} + 1 \right)^{-k})$ we have
\begin{align*}
\mathcal{R}_n &\le \frac{C}{n}\left( n^2 e^{-\kappa(\eta)\lambda_{K_n}h_n}e^{\frac{C\epsilon_n}{\gamma}\left(\lambda_{K_n}^{4+\frac{k}{2}} + 1 \right)h_n}\right).
\end{align*}
By Weyl's law we have that $\lambda_{K_n} \sim K_n^{2/k}$, thus
\begin{align*}
\mathcal{R}_n \le \frac{C}{n}\left( n^2 e^{-cK_n^{2/k}h_n}e^{\frac{\tilde{C}\epsilon_n}{\gamma}K_n^{\frac{8}{k}+1}}\right).
\end{align*}
Recalling the conditions \eqref{eq:h_n_cond}, \eqref{eq:cond_eps} and \eqref{eq:cond_spect_gap} in Theorem~\ref{thm:estimate_random_graphs}, as well as the scaling $K_n = (\log(n))^q$ we get
\begin{align*}
\mathcal{R}_n &\le \frac{C}{n}\left(n^2e^{-c(\log(n))^{\frac{2q}{k} - \alpha}} \right) 
\\ &= \frac{Ch_n^{3/2}}{n}\left( \frac{n^{2-c(\log(n))^{\frac{2q}{k}-1-\alpha}}}{h_n^{3/2}}\right)
\\ &\le \frac{Ch_n^{3/2}}{n}\left( n^{2-c(\log(n))^{\frac{2q}{k}-1-\alpha}} (\log(n))^{\frac{3\alpha}{2}}\right).
\end{align*}
So $\mathcal{R}_n = O\left( \frac{h_n^{3/2}}{n}\right)$ because by the definition of $\alpha$ in \eqref{eq:h_n_cond} in Theorem \ref{thm:estimate_random_graphs} we have $\frac{2q}{k}-1-\alpha > 0$.
\\
We are left with proving item \eqref{item:estimate} in Theorem~\ref{thm:conditional_conv} for both $e^{-t\Delta_n}(\cdot)$ and $P_n$. We prove it for $e^{-t\Delta_n}(\cdot)$, the proof for $P_n$ being analogous. The proof is divided into three steps.

Step 1. We claim that with probability greater than $1 - a_1 \epsilon_n^{-6k}\operatorname{exp}(-a_2n \epsilon_n^{k+4}) - a_3 n\operatorname{exp}(-a_4n\left( \lambda_{K_n} + 1 \right)^{-k})$
\begin{equation}\label{eq:pointwise_comp_HK}
\max_{x, y \in V_n}\left| H_{\epsilon_n}^{n}(h_n, x, y) - \frac{\rho(y)}{n}H(\kappa(\eta)h_n, x, y)\right| = o\left( \frac{\sqrt{h_n}}{n}\right).
\end{equation}
To show \eqref{eq:pointwise_comp_HK} we pick two points $x, y \in V_n$ and we compute
\begin{align*}
    \bigg| H_{\epsilon_n}^{n}(h_n, x, y) - \frac{\rho(y)}{n}H(\kappa(\eta)h_n, x, y)\bigg| \le &\bigg| H_{\epsilon_n}^{K_n}(h_n, x, y) - \frac{\rho(y)}{n}H(\kappa(\eta)h_n, x, y)\bigg|
    \\ &+ \bigg| \sum_{l=K_n+1}^{n} e^{-h_n\lambda_n^l} v_n^l(x) v_n^l(y) \frac{d_n(y)}{n}\bigg|.
\end{align*}
From Lemma \ref{lem:heat_kernel_estimate} we get that the first term on the right-hand side is $o\left( \frac{\sqrt{h_n}}{n}\right)$ with probability greater than $1 - a_1 \epsilon_n^{-6k}\operatorname{exp}(-a_2n \epsilon_n^{k+4}) - a_3 n\operatorname{exp}(-a_4n\left( \lambda_{K_n} + 1 \right)^{-k})$, while the second term is estimated in the same way as the term $\mathcal{R}_n$ in the previous part of the proof.

Step 2. We choose an optimal transport map
\begin{equation}
T_n \in \underset{T_{\#} \nu = \nu_n}{\operatorname{argmin}} \sup_{x \in M} d_M(x, T(x)),\ \theta_n := \sup_{x \in M} d_M(x, T_n(x)).
\end{equation}
We claim that, with probability greater than $1 - a_1 \epsilon_n^{-6k}\operatorname{exp}(-a_2n \epsilon_n^{k+4}) \break - a_3 n\operatorname{exp}(-a_4n\left( \lambda_{K_n} + 1 \right)^{-k})$, we have for every $f \in C^{\infty}(M)$,
\begin{align}
\max_{x \in V_n} \left| e^{-h_n\Delta_n}f (x) - e^{-\kappa(\eta)h_n\Delta_{\rho^2}}f(x)\right| \le &L_1 \sup_M |f| \frac{\theta_n}{\sqrt{h_n}}e^{\frac{2\theta_n \operatorname{diam}(M)}{h_n}}.\nonumber
\\ &+ \sup_M |f| o(\sqrt{h_n}) + L_2 \left( \sup_{M}|f|+ \operatorname{Lip}(f)\right) \theta_n,\label{eq:est_to_use_for_conclusion}
\end{align}
where the constants $L_1, L_2$ and the function in $o(\sqrt{h_n})$ depend only on $M$.

To show \eqref{eq:est_to_use_for_conclusion}, we work under the assumption that we are in the event in which the estimate of Step 1 holds true; this happens with probability greater than
\begin{equation}
1 - a_1 \epsilon_n^{-6k}\operatorname{exp}(-a_2n \epsilon_n^{k+4}) - a_3 n\operatorname{exp}(-a_4n\left( \lambda_{K_n} + 1 \right)^{-k}).
\end{equation}
We take $f \in C^{\infty}(M)$ and $x \in V_n$. Then by using the triangle inequality
\begin{align*}
&\begin{aligned}
|e^{-h_n\Delta_n}f(x) - e^{-\kappa(\eta)h_n\Delta_{\rho^2}}f(x)| 
\end{aligned}
\\ &\begin{aligned}
= \left|\sum_{y \in V_n} H_{\epsilon_n}^n(h_n, x, y)f(y) - \int_M H(\kappa(\eta)h_n, x, y)f(y) \rho^2(y)d\volm(y) \right|
\end{aligned}
\\ &\begin{aligned}
\le &\ \sum_{y \in V_n} \bigg|H_{\epsilon_n}^n(h_n, x, y)f(y) - \frac{\rho(y)}{n}H(\kappa(\eta)h_n, x, y)f(y)\bigg| 
\\ &+\bigg|\sum_{y \in V_n}\frac{\rho(y)}{n}H(\kappa(\eta)h_n, x, y)f(y) - \int_MH(\kappa(\eta)h_n, x, y)f(y)\rho^2(y)d\volm(y)\bigg|.
\end{aligned}
\end{align*}
For the first term on the right-hand side, we use the estimate in Step 1 to infer
\begin{align*}
\sum_{y \in V_n} \bigg|H_{\epsilon_n}^n(h_n, x, y)f(y) - \frac{\rho(y)}{n}H(\kappa(\eta)h_n, x, y)f(y)\bigg| &\le n \sup_M |f| o\left( \frac{\sqrt{h_n}}{n}\right)
\\ &= \sup_M |f| o(\sqrt{h_n}).
\end{align*}
For the second term, we recall that $(T_n)_{\#}\nu = \nu_n$, thus
\begin{align*}
&\bigg|\sum_{y \in V_n} \frac{\rho(y)}{n}H(\kappa(\eta)h_n, x, y)f(y) - \int_MH(\kappa(\eta)h_n, x, y)f(y)\rho^2(y)d\volm(y)\bigg|
\\ &= \bigg|\int_MH(\kappa(\eta)h_n, x, T_n(y))f(T_n(y))\rho(T_n(y))d\nu(y) - \int_MH(\kappa(\eta)h_n, x, y)f(y)\rho(y)d\nu(y)\bigg|.
\end{align*}
By the smoothness of $\rho$ and $f$, we observe that
\begin{align*}
\bigg|\int_M H(\kappa(\eta)h_n, x, y)\left( f(T_n(y))\rho(T_n(y)) - f(y)\rho(y)\right)d\nu(y)\bigg| \le L_2\left( \sup_M |f| + \operatorname{Lip}(f)\right) \theta_n,
\end{align*}
so we are left with showing that
\begin{align}\label{eq:last_estimate_to_prove}
&\left| \int_M (H(h_n, x, T_n(y)) - H(h_n, x, y))f(T_n(y))\rho(T_n(y))d\nu(y) \right|\nonumber
\\ &\le L_1 \sup_M |f| \frac{\theta_n}{\sqrt{h_n}}e^{\frac{\theta_n \operatorname{diam}(M)}{h_n}}.
\end{align}
To prove \eqref{eq:last_estimate_to_prove} we fix $x, y \in M$ and we consider the length minimizing constant-speed geodesic $\sigma_{n, y}: [0,1] \to M$ from $y$ to $T_n(y)$, i.e.,
\begin{equation}
\operatorname{Length}(\sigma_{n, y}|_{[0,s]}) = d_M(y, \sigma_{n, y}(s)).
\end{equation}
By the fundamental theorem of calculus, the Cauchy--Schwarz inequality and the boundedness of $\rho$ we obtain
\begin{align}
&\left| \int_M (H(h_n, x, T_n(y)) - H(h_n, x, y))f(T_n(y))\rho(T_n(y))d\nu(y) \right| 
\\ &\le C\sup_M |f| \int_0^1\int_M |\nabla H(h_n, x, \sigma_{n, y}(s))||\dot{\sigma}_{n,y}(s)|d\nu(y)ds\nonumber
\\ &\le C\theta_n \sup_M |f| \int_0^1 \int_M \frac{\hat{Q}_1}{\sqrt{h_n} \mu(B_{\sqrt{h_n}}(x))} \operatorname{exp}\left( -\frac{d_M^2(x,\sigma_{n, y}(s))}{\hat{Q}_2 h_n}\right) d\nu(y)ds,\label{eq:before_est_d}
\end{align}
where in the last line we used the fact that the speed of the constant-speed geodesic $\sigma_{n, y}$ is equal to its length -- which can be bounded by $C\theta_n$ by definition of $\theta_n$ -- and we estimated the gradient of the heat kernel by an application of Theorem~\ref{thm:gaussian_bounds}. We now observe that by the reverse triangle inequality
\begin{align*}
|d_M^2(x, \sigma_{n, y}(s)) - d_M^2(x,y)| &= (d_M(x, y)-d_M(x, \sigma_{n, y}(s)))(d_M(x, \sigma_{n, y}(s)) + d_M(x, y))
\\ &\le 2\theta_nd_M(x, y).
\end{align*}
Inserting this estimate into \eqref{eq:before_est_d} and using the Gaussian lower bound for the heat kernel from Theorem~\ref{thm:gaussian_bounds} yields
\begin{align*}
&\left| \int_M (H(h_n, x, T_n(y)) - H(h_n, x, y))f(T_n(y))\rho(T_n(y))d\nu \right| 
\\ &\le C\frac{\theta_n}{\sqrt{h_n}}e^{\frac{2\theta_n \operatorname{diam}(M)}{h_n}}\sup_M |f| \int_M H(\tilde{Q}h_n, x, y)d\nu(y)
\\ &\le L_1 \sup_M |f| \frac{\theta_n}{\sqrt{h_n}}e^{\frac{2\theta_n \operatorname{diam}(M)}{h_n}}.
\end{align*}

Step 2. Conclusion. To conclude the proof of the theorem from \eqref{eq:est_to_use_for_conclusion} one clearly just needs to prove that
\begin{equation}
\limsup_{n \to +\infty} \frac{\theta_n}{h_n^{3/2}} < +\infty.
\end{equation}
We first treat the case $k \ge 3$. Observe that, by Theorem~\ref{thm:transp_plan_existence}
\begin{equation}
\limsup_{n\to +\infty} \frac{n^{1/k}\theta_n}{\operatorname{log}^{1/k}(n)} < +\infty.
\end{equation}
In particular, using also assumption \eqref{eq:lower_bdd_eps}
\begin{align*}
\limsup_{n \to +\infty}\frac{\theta_n}{h_n^{3/2}} &= \limsup_{n \to +\infty}\left(\frac{n^{1/k}\theta_n }{\operatorname{log}^{1/k}(n)}\frac{\operatorname{log}^{1/k}(n)}{\epsilon_n n^{1/k}}\frac{\epsilon_n}{h_n^{3/2}}\right) < +\infty,
\end{align*}
provided
\begin{equation}\label{eq:eps_h_n}
\limsup_{n \to +\infty} \frac{\epsilon_n}{h_n^{3/2}} < +\infty.
\end{equation}
To check that \eqref{eq:eps_h_n} is satisfied, we observe that by the assumptions \eqref{eq:h_n_cond} and \eqref{eq:cond_eps} in Theorem~\ref{thm:estimate_random_graphs} we get
\begin{align*}
\limsup_{n \to +\infty} \frac{\epsilon_n}{h_n^{3/2}} \le \limsup_{n \to +\infty}\ (\log(n))^{\frac{3}{2}\alpha - \beta},
\end{align*}
the right-hand side of which is finite since assumption \eqref{eq:cond_sq} in Theorem \ref{thm:estimate_random_graphs} implies $\frac{3}{2}\alpha - \beta \le 0$.
\\
For the case $k = 2$ we proceed analogously. Recall that by Theorem~\ref{thm:transp_plan_existence}
\begin{equation}
\limsup_{n \to +\infty} \frac{n^{1/2}\theta_n}{\operatorname{log}^{3/4}(n)}< +\infty.
\end{equation}
In particular, using also assumption \eqref{eq:lower_bdd_eps} in Theorem \ref{thm:estimate_random_graphs} we obtain
\begin{equation}
\limsup_{n \to +\infty}\frac{\theta_n}{h_n^{3/2}} = \limsup_{n \to +\infty}\left(\frac{\theta_n n^{1/2}}{\operatorname{log}^{3/4}(n)}\left(\frac{\operatorname{log}(n)}{\epsilon_n^8 n}\right)^{1/2}\frac{\epsilon_n^4\operatorname{log}^{1/4}(n)}{h_n^{3/2}}\right) < +\infty,
\end{equation}
provided
\begin{equation}
\limsup_{n\to +\infty} \frac{\epsilon_n^4\operatorname{log}^{1/4}(n)}{h_n^{3/2}} < +\infty.
\end{equation}
To show this, we estimate $\epsilon_n$ using assumption \eqref{eq:cond_eps} in Theorem~\ref{thm:estimate_random_graphs} and esimate $h_n$ using assumption \eqref{eq:h_n_cond} in Theorem \ref{thm:estimate_random_graphs}
\begin{align*}
\limsup_{n \to +\infty}\frac{\epsilon_n^4\operatorname{log}^{1/4}(n)}{h_n^{3/2}} &\le\limsup_{n \to +\infty}\ (\log(n))^{\frac{1}{4}+\frac{3}{2}\alpha - 4 \beta} <+\infty,
\end{align*}
which follows from \eqref{eq:cond_sq} in Theorem \ref{thm:estimate_random_graphs}.
\end{proof}

\begin{proof}[Proof of Lemma \ref{lem:heat_kernel_estimate}]
As in the proof of Theorem \ref{thm:estimate_random_graphs}, we will for simplicity assume that $K_n = \log(n)^q \in \mathbf{N}$. We will indicate by $\gamma$ the quantity $\gamma:= \inf_{i \in \mathbf{N}}(\lambda_{i+1}-\lambda_i)$, which is positive by Item \eqref{eq:cond_spect_gap} in Theorem~\ref{thm:estimate_random_graphs}.

To show \eqref{eq:pointwise_comp}, fix two points $x, y \in V_n$. By using the expansion \eqref{eq:manifold_hk_expansion} and the triangle inequality we have
\begin{equation}
\left| H_{\epsilon_n}^{K_n}(h_n, x, y) - \frac{\rho(y)}{n}H(\kappa(\eta)h_n, x, y)\right| \le \mathbf{I}_n + \mathbf{II}_n,
\end{equation}
where we define
\begin{align*}
\mathbf{I}_n &= \left|\sum_{l=1}^{K_n - 1}  e^{-h_n\lambda_n^l}v_n^l(x)v_n^l(y)\frac{d_n(y)}{n} -  e^{-h_n\kappa(\eta)\lambda^l}f_l(x)f_l(y)\frac{\rho(y)}{n}\right|,
\\ \mathbf{II}_n &=\left| \sum_{l=K_n}^{+\infty} e^{-h_n\kappa(\eta)\lambda^l}f_l(x)f_l(y)\frac{\rho(y)}{n}\right|.
\end{align*}
We now proceed to show that these two terms are both of order $o\left(\frac{\sqrt{h_n}}{n}\right)$.
\\
To control term $\mathbf{II}_n$ we follow the ideas in \cite{Dunson2021} and \cite{Berard1994}. By the Cauchy--Schwarz inequality and by the fact that $\rho$ is bounded we get
\begin{equation}
\mathbf{II}_n \le \frac{C}{n}\max_{z \in M} \sum_{l=K_n}^{+\infty} e^{-h_n \kappa(\eta)\lambda_l} f_l^2(z).
\end{equation}
To control the right hand side, fix $z \in M$. We define a measure $\omega_z$ on $\mathbf{R}$ by
\begin{equation}
\omega_z := \sum_{l = K_n}^{+\infty} f_l^2(z) \delta_{\lambda_l}(d\lambda).
\end{equation}
Then an integration by parts yields
\begin{align*}
&\begin{aligned}
\sum_{l=K_n}^{+\infty} e^{-h_n\kappa(\eta)\lambda_l} f_l^2(z) &=\int_{\mathbf{R}} e^{-\kappa(\eta)h_n\lambda}d\omega_z(d\lambda) 
\end{aligned}
\\ &\begin{aligned}
\phantom{\sum_{l=K_n}^{+\infty} e^{-h_n\kappa(\eta)\lambda_l} f_l^2(z)} = \left[  e^{-\kappa(\eta)h_n\lambda}\omega_z([0, \lambda])\right]_{\lambda = 0}^{+\infty} + \int_{\mathbf{R}} \kappa(\eta)h_n e^{-\kappa(\eta)h_n\lambda}\omega_z([0, \lambda])d\lambda
\end{aligned}
\\ &\begin{aligned}
\phantom{\sum_{l=K_n}^{+\infty} e^{-h_n\kappa(\eta)\lambda_l} f_l^2(z)} \le &\limsup_{\lambda \to +\infty}  \left(e^{-h_n\kappa(\eta)\lambda}\sum_{\lambda_{K_n} \le \lambda_l \le \lambda} f_l^2(z) \right) 
\\ &+ \int_{\lambda_{K_n}}^{+\infty}h_n\kappa(\eta) e^{-h_n\kappa(\eta)\lambda}\omega_z([0, \lambda])d\lambda.
\end{aligned}
\end{align*}
Now we use Theorem~\ref{thm:gaussian_bounds} to show that the first term on the right hand side vanishes. Recalling the notation $\mu := \xi \volm$, and using the Gaussian upper bounds in Theorem~\ref{thm:gaussian_bounds} we get in particular
\begin{align}
\sum_{\lambda_{K_n} \le \lambda_l \le \lambda} f_l^2(z) \le e \sum_{0 \le \lambda_l \le \lambda} e^{-\frac{\lambda_l}{\lambda}} f_l^2(z) &\le eH\left( \frac{1}{\lambda}, z, z\right)\label{eq:abound}
\\ &\le \frac{C}{\mu (B_{\lambda^{-1/2}}(x) ))} \le C \lambda^{\frac{k}{2}},\nonumber
\end{align}
so that indeed 
\begin{align*}
\limsup_{\lambda \to +\infty} e^{-h_n \frac{\kappa(\eta)}{2}\lambda} \sum_{\lambda_{K_n} \le \lambda_l \le \lambda} f_l^2(z) \le \limsup_{\lambda \to +\infty} e^{-h_n\frac{\kappa(\eta)}{2}\lambda}C\lambda^{\frac{k}{2}} = 0.
\end{align*}
We thus obtain, using \eqref{eq:abound} once more with $\lambda_{K_n}$ replaced by zero,
\begin{align*}
\mathbf{II}_n &\le \frac{C}{n} \int_{\lambda_{K_n}}^{+\infty} h_n\kappa(\eta)e^{-h_n\kappa(\eta)\lambda}\lambda^{k/2}d\lambda
\\ &= \frac{C}{n} \left( h_n\kappa(\eta)\right)^{-\frac{k}{2}}\int_{\kappa(\eta)h_n\lambda_{K_n}}^{+\infty} e^{-\lambda}\lambda^{k/2}d\lambda
\\ & \le \frac{C}{n} h_n^{-\frac{k}{2}}\int_{ch_nK_n^{2/k}}^{+\infty} e^{-\lambda}\lambda^{k/2}d\lambda,
\end{align*}
where we used Weyl's law in the last step. If $ch_nK_n^{\frac{2}{k}} - \frac{k}{2} \ge 1$, we can estimate the right hand side by
\begin{align*}
\frac{C}{n} h_n^{-\frac{k}{2}}\left(ch_nK_n^{\frac{2}{k}}\right)^{\frac{k}{2}+1} e^{-ch_nK_n^{\frac{2}{k}}} &= \frac{\tilde{C}}{n} K_n e^{-A}A,
\end{align*}
where $A = ch_nK_n^{\frac{2}{k}}$.
Now we  follow the reasoning in the proof of \cite[Theorem 3]{Dunson2021} to obtain $K_nAe^{-A} \le \frac{1}{K_n}e^{-\frac{A}{2}}$ provided $A \ge 8\operatorname{log}(K_n)$, which is satisfied because of our assumption \eqref{eq:h_n_cond} in Theorem~\ref{thm:estimate_random_graphs}. Thus, using again our assumptions on $h_n$
\begin{align*}
\mathbf{II}_n &\le \frac{\tilde{C}\sqrt{h_n}}{n}\left( \frac{e^{-c(\log(n))^{\frac{2q}{k} -\alpha}}}{(\log(n))^q\sqrt{h_n}} \right)
\\ &\le \frac{\tilde{C}\sqrt{h_n}}{n}\left( e^{-c(\log(n))^{\frac{2q}{k}-\alpha}}(\log(n))^{\frac{\alpha}{2}-q} \right).
\end{align*}
Thus we obtain that $\mathbf{II}_n = o\left( \frac{\sqrt{h_n}}{n} \right)$ because of the definition of $\alpha$.

Regarding the term $\mathbf{I}_n$, we use the triangle inequality, to decompose this into four terms
\begin{equation}
\mathbf{I}_n \le \mathbf{I}_n^a + \mathbf{I}_n^b + \mathbf{I}_n^c + \mathbf{I}_n^d,
\end{equation}
where
\begin{align*}
\mathbf{I}_n^a &= \left| \sum_{l=1}^{K_n -1 } \left( e^{-h_n\lambda_n^l} - e^{-\kappa(\eta)h_n\lambda_l}\right)\frac{\rho(y)}{n}f_l(x)f_l(y)\right|,
\\ \mathbf{I}_n^b &= \left| \sum_{l=1}^{K_n -1 } e^{-h_n\lambda_n^l}\left(C_1 \frac{\rho(y)}{n} - \frac{d_n(y)}{n}\right)\frac{f_l(x)}{C_1^{1/2}}\frac{f_l(y)}{C_1^{1/2}} \right|,
\\ \mathbf{I}_n^c &= \left| \sum_{l=1}^{K_n -1 } e^{-h_n\lambda_n^l}\frac{d_n(y)}{n}\left(\frac{f_l(x)}{C_1^{1/2}} - v_n^l(x)\right)\frac{f_l(y)}{C_1^{1/2}} \right|,
\\ \mathbf{I}_n^d &= \left| \sum_{l=1}^{K_n -1 } e^{-h_n\lambda_n^l}\frac{d_n(y)}{n}v_n^l(x)\left(\frac{f_l(y)}{C_1^{1/2}} - v_n^l(y)\right) \right|.
\end{align*}
We now proceed at estimating these four terms.
\\
Term $\mathbf{I}_n^a$. We observe that $\lambda_n^1 = \lambda_1 = 0$, thus in the sum we can neglect the term corresponding to $l=1$, i.e.
\begin{equation}\label{eq:first_of_II}
\mathbf{I}_n^a \le \frac{C}{n}\sum_{l=2}^{K_n -1 } \left| e^{-h_n\lambda_n^l} - e^{-h_n\kappa(\eta)\lambda_l}\right|\Vert f_l \Vert_{C^0(M)}^2.
\end{equation}
Since $s \mapsto e^{-s}$ is $1$-Lipschitz continuous on $[0, +\infty)$, for every $2 \le l \le K_n -1 $ we have
\begin{align*}
\left| e^{-h_n\lambda_n^l} - e^{-\kappa(\eta)h_n\lambda_l}\right| \le |\lambda_n^l-\kappa(\eta)\lambda_l|h_n \le Q_5 \frac{\Vert f_l \Vert_{C^3(M)}}{\gamma}\epsilon_nh_n,
\end{align*}
where the last inequality holds with probability greater than $1 - Q_1 \epsilon_n^{-6k}\operatorname{exp}(-Q_2n\epsilon_n^{k+4}) \break - Q_3n\operatorname{exp}(-Q_4n\left( \lambda_{\overline{l}} + 1 \right)^{-k})$ because of Theorem~\ref{thm:eigv_eigf_est}. In particular using also Theorem~\ref{thm:controlling_norm_ef} to control the $C^0$ and $C^3$ norm of the eigenfunctions and using the fact that for $l \le K_n$ we have $\lambda_l \le \lambda_{K_n}$ we can bound
\begin{equation}
\mathbf{I}_n^a \le \frac{Ch_n}{n}\left( \frac{K_n{\left(\lambda_{K_n}^{1 + \frac{k}{2}} + 1 \right)^2\left( \lambda_{K_n}^{4 + \frac{k}{2}} + 1\right)\epsilon_n}}{\gamma}\right).
\end{equation}
From this, we obtain that $\mathbf{I}_n^a = o\left( \frac{\sqrt{h_n}}{n}\right)$, because by our assumptions on $\epsilon_n$  in \eqref{eq:cond_eps} of Theorem~\ref{thm:estimate_random_graphs} and our assumptions on the spectral gap in \eqref{eq:cond_spect_gap} of Theorem~\ref{thm:estimate_random_graphs} we clearly have
\begin{equation}
\left(\frac{K_n\left(\lambda_{K_n}^{1 + \frac{k}{2}} + 1 \right)^2\left( \lambda_{K_n}^{4 + \frac{k}{2}} + 1\right)\epsilon_n}{\gamma} \right) = o(1).
\end{equation}
\\
Term $\mathbf{I}_n^b$. Using Theorem~\ref{thm:eigv_eigf_est}, Theorem~\ref{thm:density_est} and Theorem~\ref{thm:controlling_norm_ef} we have that with probability greater than $1 - Q_1 \epsilon_n^{-6k}\operatorname{exp}(-Q_2n\epsilon_n^{k+4}) - Q_3n\operatorname{exp}(-Q_4n\left( \lambda_{\overline{l}} + 1 \right)^{-k}) \break - Q_6\epsilon_n^{-k}\operatorname{exp}(-Q_7n \epsilon_n^{k+2}),$ for each $1 \le l \le K_n -1$ we can estimate
\begin{align*}
& \left|e^{-h_n\lambda_n^l}\left( C_1 \frac{\rho(y)}{n} - \frac{d_n(y)}{n}\right) \frac{f_l(x)}{C_1^{1/2}}\frac{f_l(y)}{C_1^{1/2}} \right|
\\ & \le \frac{C}{n}e^{-h_n\kappa(\eta)\lambda_l}e^{-h_n\left( \lambda_n^l - \kappa(\eta)\lambda_l\right)}\Vert C_1\rho - d_n \Vert_{L^{\infty}(G_n)} \Vert f_l \Vert_{L^{\infty}(M)}^2
\\ & \le \frac{C}{n} e^{Ch_n \frac{\left( \lambda_{K_n}^{4+\frac{k}{2}} + 1 \right)\epsilon_n}{\gamma}}\left( \lambda_{K_n}^{1+\frac{k}{2}} + 1\right)^2\epsilon_n.
\end{align*}
In particular, multiplying and dividing by $\sqrt{h_n}$ and summing over $l = 1, . . . , K_n$, we obtain
\begin{equation}
\mathbf{I}_n^b \le  \frac{C\sqrt{h_n}}{n}\left( \frac{K_n}{\sqrt{h_n}}e^{ch_n \frac{\left( \lambda_{K_n}^{4+\frac{k}{2}} + 1 \right)\epsilon_n}{\gamma}}\left( \lambda_{K_n}^{1+\frac{k}{2}} + 1\right)^2\epsilon_n \right).
\end{equation}
By Weyl's law and our by assumptions \eqref{eq:cond_eps}, \eqref{eq:h_n_cond} and \eqref{eq:cond_spect_gap} in Theorem~\ref{thm:estimate_random_graphs}, this is again an $o\left( \frac{\sqrt{h_n}}{n}\right)$ term.
\\
The terms $\mathbf{I}_n^c, \mathbf{I}_n^d$ are treated similarly. In particular $\mathbf{I}_n= o\left( \frac{\sqrt{h_n}}{n}\right)$ provided we are in the event in which Theorem~\ref{thm:eigv_eigf_est} and Theorem~\ref{thm:density_est} apply. This happens with probability greater than 
\begin{align*}
&1 - Q_1 \epsilon_n^{-6k}\operatorname{exp}(-Q_2n\epsilon_n^{k+4}) - Q_3n\operatorname{exp}(-Q_4n\left( \lambda_{\overline{l}} + 1 \right)^{-k}) - Q_6\epsilon_n^{-k}\operatorname{exp}(-Q_7n \epsilon_n^{k+2})
\\ & \ge 1 - (Q_1+Q_6) \epsilon_n^{-6k}\operatorname{exp}(-\min(Q_2, Q_7)n\epsilon_n^{k+4}) - Q_3n\operatorname{exp}(-Q_4n\left( \lambda_{\overline{l}} + 1 \right)^{-k})
\\ &= 1 - a_1 \epsilon_n^{-6k}\operatorname{exp}(-a_2n \epsilon_n^{k+4}) - a_3 n\operatorname{exp}(-a_4n\left( \lambda_{K_n} + 1 \right)^{-k}),
\end{align*}
provided $n$ is large enough, this concludes our argument for \eqref{eq:pointwise_comp}.
\end{proof}

\begin{proof}[Proof of Corollary \ref{corollary: conv_geom_graphs}]
We know from Theorem~\ref{thm:estimate_random_graphs} that for $n$ large enough, assumptions \eqref{item:monotonicity}, \eqref{item:estimate}, \eqref{item:approx_one} of Theorem~\ref{thm:conditional_conv} hold for both the choices of the operators $e^{-t\Delta_n}$ and $P_n$ on the graph $G_n$ on an event $A_n$ such that
\begin{equation}
\mathbb{P}(A_n) \ge 1 - C\epsilon_{n}^{-6k}\operatorname{exp}(-\frac{1}{C}n\epsilon_n^{k+4}) - Cn\operatorname{exp}(-\frac{n}{C(\log(n))^{2q}}).
\end{equation}
For $\overline{n} \in \mathbf{N}$ large enough we consider the set
\begin{equation}
C_{\overline{n}} := \bigcap_{n \ge \overline{n}} A_n.
\end{equation}
Theorem~\ref{thm:conditional_conv} says that, in the event $C_{\overline{n}}$, for both the choices of the operators $e^{-t\Delta_n}$ and $P_n$ we have that \eqref{eq:lowersol} and \eqref{eq:uppersol} hold true. Observe that
\begin{align*}
\mathbb{P}(C_{\overline{n}}) \ge 1 - \sum_{n \ge \overline{n}} C\epsilon_{n}^{-6k}\operatorname{exp}(-\frac{1}{C}n\epsilon_n^{k+4}) - Cn\operatorname{exp}(-\frac{n}{C(\log(n))^{2q}}),
\end{align*}
In particular, we have that
\begin{align}
\mathbb{P}\left( \left\{ u^*\ \text{and}\ u_*\ \text{satisfy \eqref{eq:lowersol} and \eqref{eq:uppersol}}\right\}\right) &\ge \mathbb{P}\left( \bigcup_{\overline{n} \in \mathbf{N}} C_{\overline{n}} \right) = \lim_{\overline{n} \to +\infty} \mathbb{P}(C_{\overline{n}}) \nonumber
\\ &\begin{aligned}\ge 1 - \lim_{\overline{n} \to +\infty}\sum_{n \ge \overline{n}} \bigg(&C\epsilon_{n}^{-6k}\operatorname{exp}(-\frac{1}{C}n\epsilon_n^{k+4}) 
\\ &- Cn\operatorname{exp}(-\frac{n}{C(\log(n))^{2q}})\bigg).\label{eq:appear_series}
\end{aligned}
\end{align}
We thus  just need to show that
\begin{equation}
\lim_{\overline{n} \to +\infty}\sum_{n \ge \overline{n}} \bigg(C\epsilon_{n}^{-6k}\operatorname{exp}(-\frac{1}{C}n\epsilon_n^{k+4}) - Cn\operatorname{exp}(-\frac{n}{C(\log(n))^{2q}})\bigg) = 0,
\end{equation}
in other words, we need to prove that the series is convergent. To this end, observe that
\begin{align*}
C\epsilon_{n}^{-6k}\operatorname{exp}(-\frac{1}{C}n\epsilon_n^{k+4}) &= C\operatorname{exp}\left( -6k\operatorname{log}(\epsilon_n) -\frac{1}{C}n\epsilon_n^{k+4} \right)
\\ &= C\operatorname{exp}\left(\operatorname{log}(n)\left( -6k\frac{\operatorname{log}(\epsilon_n)}{\operatorname{log}(n)} -\frac{1}{C}\frac{n\epsilon_n^{k+4}}{\operatorname{log}(n)} \right)\right)
\\ &= Cn^{\left( -6k\frac{\operatorname{log}(\epsilon_n)}{\operatorname{log}(n)} -\frac{1}{C}\frac{n\epsilon_n^{k+4}}{\operatorname{log}(n)} \right)}.
\end{align*}
In a similar way, we have
\begin{equation}
Cn\operatorname{exp}(-\frac{n}{C(\log(n))^{2q}}) = Cn^{\left( 1 - \frac{1}{C}\frac{n}{(\log(n))^{2q+1}} \right)}.
\end{equation}
To prove the convergence of the series appearing in \eqref{eq:appear_series} it is sufficient to show
\begin{equation}
\lim_{n \to +\infty}\left( -6k\frac{\operatorname{log}(\epsilon_n)}{\operatorname{log}(n)} -\frac{1}{C}\frac{n\epsilon_n^{k+4}}{\operatorname{log}(n)} \right) = \lim_{n \to +\infty}\left( 1 - \frac{1}{C}\frac{n}{(\log(n))^{2q+1}} \right) = -\infty.
\end{equation}
The second limit is easily treated. To treat the first limit, observe that by assumption \eqref{eq:lower_bound_eps_as} in Corollary \ref{corollary: conv_geom_graphs} we have
\begin{equation}\label{eq:what_eps}
\lim_{n \to +\infty} \frac{n\epsilon_n^{k+4}}{\operatorname{log}(n)} = +\infty.
\end{equation}
To conclude the proof, we show that 
\begin{equation}\label{eq:claim_on_second}
\inf_{n \in \mathbf{N}}\frac{\operatorname{log}(\epsilon_n)}{\operatorname{log}(n)} > -\infty.
\end{equation}
Indeed, we have
\begin{align*}
\frac{\operatorname{log}(\epsilon_n)}{\operatorname{log}(n)} &= \frac{\operatorname{log}\left( \frac{\epsilon_n n^{\frac{1}{k+4}}}{\operatorname{log}^{\frac{1}{k+4}}(n)}\right)}{\operatorname{log}(n)} - \frac{1}{k+4} + \frac{\operatorname{log}\operatorname{log}(n)}{\operatorname{log}(n)},
\end{align*}
The first term is bounded from below because it is asymptotically nonnegative by \eqref{eq:lower_bound_eps_as} . The last term converges to zero as $n \to +\infty$. Thus \eqref{eq:claim_on_second} holds and the proof is complete.
\end{proof}

\subsection{MBO on manifolds}

\begin{proof}[Proof of Theorem~\ref{thm:thresholding_one_step}]
We let $\hat{x} := \operatorname{exp}_x(z(x)\nu(x))$. Then we have

\begin{equation}\label{eq:def_x_hat}
\frac{1}{2} + \omega_1\sqrt{h} = \int_{\Omega_0} H(\kappa h, \hat{x}, y)\rho^2(y)d\volm
\end{equation}
By the Gaussian upper bounds on the heat kernel in Theorem~\ref{thm:gaussian_bounds}, we have that $d_M(\hat{x}, \partial\Omega_0) \le \tilde{C}\sqrt{h}$, for a fixed constant $\tilde{C}$, independent of $\Omega_0$. In particular, we infer from the asymptotic expansion of the heat kernel in Theorem~\ref{thm:asymptotic_exp} that

\begin{equation}\label{eq:asexp}
\frac{1}{2} + \omega_1\sqrt{h} = \int_{\Omega_0} \frac{e^{-\frac{d_M^2(\hat{x},y)}{4\kappa h}}}{(4\pi \kappa h)^{k/2}} v_0(\hat{x},y)\rho^2(y) d\volm + O(h).
\end{equation}
Since $d(\hat{x}, \partial\Omega_0) \le \tilde{C}h$, and $\operatorname{diam}(\Omega_0) \le \frac{\operatorname{inj}(M)}{2}$, we can rewrite the integral in  \eqref{eq:asexp} in exponential coordinates around $\hat{x}$, i.e.

\begin{equation}\label{eq:asexp1}
\frac{1}{2} + \omega_1\circ \operatorname{exp}_{\hat{x}}\sqrt{h} = \int_{\tilde{\Omega}_0} \frac{e^{-\frac{|y|^2}{4\kappa h}}}{(4\pi \kappa h)^{k/2}} v_0(\hat{x},\operatorname{exp}_{\hat{x}}(y))\rho^2(\operatorname{exp}_{\hat{x}}(y)) dy + O(h),
\end{equation}
where $\tilde{\Omega}_0 := \operatorname{exp}_{\hat{x}}^{-1}(\Omega_0)$. Recalling that $v_0(\hat{x}, \hat{x}) = \frac{1}{\rho^2(\hat{x})}$, a Taylor expansion of the function $y \mapsto v_0(\hat{x}, \operatorname{exp}_{\hat{x}}(y)) \rho^2(\operatorname{exp}_{\hat{x}}(y))$ around zero reveals that 

\begin{equation}\label{eq:asexp_taylor}
\frac{1}{2} + \omega_1\circ \operatorname{exp}_{\hat{x}}\sqrt{h} = \int_{\tilde{\Omega}_0} \frac{e^{-\frac{|y|^2}{4\kappa h}}}{(4\pi \kappa h)^{k/2}} dy + O(\sqrt{h}).
\end{equation}
In other words, there exists a bounded function $\omega_2$ on $\mathbf{R}^k$ such that

\begin{equation}
\frac{1}{2} + \omega_2\sqrt{h} = \int_{\tilde{\Omega}_0} \frac{e^{-\frac{|y|^2}{4\kappa h}}}{(4\pi \kappa h)^{k/2}} dy.
\end{equation}
In other words, we have that $0 \in \partial E$, where

\begin{equation}
E = \left\{ v \in \mathbf{R}^k|\ \frac{1}{2} + \omega_2(v)\sqrt{h} \le \int_{\tilde{\Omega}_0} \frac{e^{-\frac{|v-y|^2}{4\kappa h}}}{(4\pi \kappa h)^{k/2}} dy\right\},
\end{equation}
and thus the normal distance $z(x)$ coincides with the normal distance of $\partial \tilde{\Omega}_0$ and $E$ at the point $\exp_{\hat{x}}^{-1}(x) \in \partial \tilde{\Omega}_0$. The conclusion of the proof is then obtained by applying the following result.
\end{proof}

\begin{proposition}\label{prop:thresh_one_step_euclid}
Let $\Omega \subset \mathbf{R}^k$ be a smooth open set. Let $E$ be obtained by applying one step of MBO with diffusion coefficient $\kappa > 0$, bounded drift $\omega:\mathbf{R}^k \to \mathbf{R}$ and step size $h>0$. Let $x \in \partial \Omega$. Let $\nu(x)$ be the outer unit normal to $\partial \Omega$ at $x$, define
\begin{equation}
z(x) := \begin{cases}
\sup\left\{l \in \mathbf{R}^{-}|\ x+l\nu(x) \in E \right\}\ &\text{if}\ x \not \in E
\\
\inf\left\{l \in \mathbf{R}^{+}|\ x+l\nu(x) \not \in E \right\}\ &\text{if}\ x \in E.
\end{cases}
\end{equation}
Then we have
\begin{equation}
|z(x)| \le \tilde{C}h,
\end{equation}
where the constant $\tilde{C}$ depends only on $k, \kappa$ and the $C^0$-norm of the second fundamental form of $\partial \Omega$.
\end{proposition}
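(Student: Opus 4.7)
The plan is to work in local coordinates around $x$ and to combine two quantitative estimates on $u := G_{\kappa h} * \mathbf{1}_\Omega$: first, that $|u(x) - \tfrac12| \lesssim \sqrt{h}\,\|II\|_{C^0}$, where $II$ denotes the second fundamental form of $\partial\Omega$; and second, that the normal derivative $|\partial_\nu u|$ is bounded below by $c/\sqrt{h}$ on a segment of normal length $\sim\sqrt{h}$ around $x$. The mean value theorem will then translate a level-set displacement of order $\sqrt{h}$ into a normal displacement of order $h$. Without loss of generality, assume $x=0$ and $\nu(x) = e_k$, and write $\Omega \cap B_R(0) = \{y_k < \varphi(y')\}$ with $\varphi(0)=0$, $\nabla\varphi(0)=0$, and $\|D^2\varphi\|_\infty \le C\|II\|_{C^0}$, for some $R>0$ depending only on $\|II\|_{C^0}$.

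Fubini in $(y', y_k)$ and the substitution $\tau = (y_k-s)/\sqrt{4\kappa h}$ produce the representation
\begin{equation}
u(se_k) = \int_{\mathbf{R}^{k-1}} G^{(k-1)}_{\kappa h}(y')\,F\!\left(\tfrac{\varphi(y') - s}{\sqrt{4\kappa h}}\right)dy' + O(e^{-c/h}),
\end{equation}
where $G^{(k-1)}_{\kappa h}$ is the $(k-1)$-dimensional Gaussian of variance $2\kappa h$ and $F(t) := \tfrac12(1+\operatorname{erf}(t))$. A Taylor expansion of $F$ at zero, together with $|\varphi(y')| \le C\|II\|_{C^0}|y'|^2$ and the identity $\int G^{(k-1)}_{\kappa h}(y')|y'|^2\,dy' = 2(k-1)\kappa h$, gives $|u(0) - \tfrac12| \le C\sqrt{h}\,\|II\|_{C^0}$. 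Differentiating in $s$ yields
\begin{equation}
\partial_s u(se_k) = -\tfrac{1}{\sqrt{4\pi\kappa h}} \int G^{(k-1)}_{\kappa h}(y')\,e^{-(\varphi(y') - s)^2/(4\kappa h)}\,dy' + O(e^{-c/h}),
\end{equation}
and restricting the integral to $|y'|\le \sqrt{h}$ (where $\varphi(y')/\sqrt{h}$ remains bounded by a multiple of $\|II\|_{C^0}$) shows that for every $|s|\le \rho\sqrt{h}$, with $\rho>0$ depending only on $k,\kappa,\|II\|_{C^0}$, one has $|\partial_s u(se_k)| \ge c/\sqrt{h}$.

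By this derivative bound, $s \mapsto u(se_k)$ is strictly decreasing on $[-\rho\sqrt{h},\rho\sqrt{h}]$, and for $h$ small enough the first estimate implies $u(-\rho\sqrt{h}\,e_k) > \tfrac12 + \|\omega\|_\infty\sqrt{h}$ and $u(\rho\sqrt{h}\,e_k) < \tfrac12-\|\omega\|_\infty\sqrt{h}$. Thus there exists a unique $s^* \in (-\rho\sqrt{h},\rho\sqrt{h})$ satisfying $u(s^* e_k) = \tfrac12 + \omega(s^* e_k)\sqrt{h}$, and by monotonicity this $s^*$ coincides with the normal distance $z(x)$ defined in the statement. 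Combining the estimate at $s=0$ with $|u(s^* e_k) - u(0)| \le (\|\omega\|_\infty + C\|II\|_{C^0})\sqrt{h}$ and the mean value theorem on the interval $[0,s^*]$ (where $|\partial_s u| \ge c/\sqrt{h}$) yields the desired $|z(x)| \le \tilde C h$.

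The main obstacle is the quantitative lower bound on $|\partial_s u|$ with a constant depending only on $\|II\|_{C^0}$. One has to restrict the integral to the correct scale $|y'|\sim\sqrt{h}$ so that $\varphi(y')/\sqrt{h}$ remains bounded by a quantity controlled only by $\|II\|_{C^0}$ and so that the Gaussian factor in the integrand does not degenerate; this is the only step where the $C^0$-norm of the second fundamental form enters the final constant $\tilde C$, and it must be handled with care to avoid spurious dependence on higher-order derivatives of $\varphi$.
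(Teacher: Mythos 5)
Your argument is correct, but the mechanism by which you extract $|z(x)|\le \tilde C h$ differs from the paper's. Both proofs share the same setup: flatten $\partial\Omega$ near $x$ as a graph $\{y_k<\varphi(y')\}$ with $\varphi(y')=O(\Vert II\Vert\,|y'|^2)$, reduce the convolution to a one-dimensional Gaussian profile in the normal variable, and absorb the far field into an exponentially small error. From there the paper lower-bounds the excess integral $\int_0^{\zeta(y)+z}e^{-s^2/4h}\,ds$ using $e^{-t}\ge 1-t$, computes Gaussian moments to arrive at a polynomial inequality of the form $\omega h - (q_2h+q_3h^2+q_4 z^2)\ge z+\tfrac{q_1}{h}z^3$, and then invokes an external lemma (\cite[Lemma 6.1]{Fuchs2022}) to control the cubic term before reading off $z=O(h)$. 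You instead prove two quantitative facts about $u=G_{\kappa h}*\mathbf{1}_\Omega$ — the consistency estimate $|u(x)-\tfrac12|\le C\Vert II\Vert\sqrt h$ and the transversality estimate $|\partial_\nu u|\ge c/\sqrt h$ on a normal segment of length $\sim\sqrt h$ — and conclude by the mean value theorem. This is a genuinely different (and self-contained) route: it replaces the algebraic bootstrap and the citation by a quantitative implicit-function-theorem argument, at the price of having to justify the derivative lower bound, which you do correctly by localizing to $|y'|\le\sqrt h$ where the exponent $(\varphi(y')-s)^2/(4\kappa h)$ stays bounded in terms of $\Vert II\Vert$ and $\kappa$ alone. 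Your approach arguably yields a cleaner proof and makes transparent where each constant comes from; the paper's approach is closer in spirit to the expansions of \cite{Mascarenhas, Fuchs2022} and would be the natural starting point if one wanted the sharp first-order coefficient of $z(x)$ rather than just the $O(h)$ bound.

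One small imprecision: since $\omega$ is only assumed bounded, the equation $u(s e_k)=\tfrac12+\omega(s e_k)\sqrt h$ need not admit a (unique) solution $s^*$, so you cannot literally identify $z(x)$ with such an $s^*$. This is harmless: by the strict monotonicity of $s\mapsto u(se_k)$ on $[-\rho\sqrt h,\rho\sqrt h]$ and the continuity of $u$, the definition of $z(x)$ as a sup/inf directly yields $|u(z(x)e_k)-\tfrac12|\le\Vert\omega\Vert_\infty\sqrt h$ on the relevant side, which is all your mean value theorem step requires. (The paper's proof makes the analogous tacit assumption that the boundary point $\hat x$ satisfies the threshold equation with equality, so you are at the same level of rigor.) Note also that, as in the paper's own argument, the final constant inevitably depends on $\Vert\omega\Vert_\infty$ as well, and the local graph representation on a ball of radius controlled by $\Vert II\Vert_{C^0}$ implicitly requires a reach-type hypothesis on $\partial\Omega$; neither issue is specific to your proof.
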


Proposition~\ref{prop:thresh_one_step_euclid} is a weaker version of \cite[Theorem 4.1]{Fuchs2022}, which makes rigorous the original ideas in \cite{Mascarenhas}. In those works, the authors identify the exact first order coefficient of the expansion of $z(x)$ in $h$. Since we do not need this, we present a proof of our weaker statement.

\begin{proof}[Proof of Proposition~\ref{prop:thresh_one_step_euclid}]
For ease of notation, we assume that $\kappa = 1$. We treat the case when $z(x) > 0$, the other case is similar. First of all, we observe that $z(x) \le \tilde{C}_k\sqrt{h}$, for a constant $\tilde{C}_k$ depending just on the dimension $k$. We now choose a coordinate system in which $x = 0$ and $\nu(x) = e_k$. We may find an open set $U$ containing the origin and a smooth function $\zeta: \mathbf{R}^{k-1} \to \mathbf{R}$ such that $\zeta(0) = 0, D\zeta(0) = 0$ and
\begin{equation}
U \cap \Omega = \left\{ v \in \mathbf{R}^k|\ v_k < \zeta(v_1, . . . ,v_{k-1}) \right\}.
\end{equation}
Using the fact that $z(x) = O(\sqrt{h})$ and the exponential decay of the heat kernel, we have that there exists a bounded function $\omega: \mathbf{R}^k \to \mathbf{R}$ such that
\begin{equation}\label{eq:graph}
\frac{1}{2} + \omega((0, z(x)))\sqrt{h} = \int_{\mathbf{R}^{k-1}} \int_{-\infty}^{\zeta(y)+z(x)} \frac{e^{-\frac{|y|^2+|s|^2}{4h}}}{(4\pi h)^{k/2}}ds dy.
\end{equation}
Recalling that the Gaussian integrates to $1/2$ over half-spaces, we get that \eqref{eq:graph} reads
\begin{equation}\label{eq:to_evaluate}
\omega((0,z(x)))\sqrt{h} = \int_{\mathbf{R}^{k-1}} \int_{0}^{\zeta(y)+z(x)} \frac{e^{-\frac{|y|^2+|s|^2}{4h}}}{(4\pi h)^{k/2}}ds dy.
\end{equation}
Since $\zeta(0) = 0$ and $D\zeta(0) = y$, there exists a bounded function $\zeta_1$ such that $\zeta(v) = \zeta_1(v)|v|^2$. We also observe that
\begin{equation}
e^{-t} \ge 1 - t,\ t \ge 0.
\end{equation}
In particular
\begin{align*}
\omega((0,z(x)))\sqrt{h} &\ge \frac{1}{(4\pi h)^{k/2}}\int_{\mathbf{R}^{k-1}}e^{-\frac{|y|^2}{4h}} \int_{0}^{\zeta(y)+z(x)} \left(1 -\frac{s^2}{4h}\right)ds dy
\\ &=\frac{1}{(4\pi h)^{k/2}}\int_{\mathbf{R}^{k-1}}e^{-\frac{|y|^2}{4h}} \left( \zeta_1(y)|y|^2 + z(x) - \frac{1}{12h}\left( \zeta_1(y)|y|^2 + z(x)\right)^3 \right) dy.
\end{align*}
By using the change of variable $y \to \sqrt{h}y$ we obtain
\begin{equation}
\omega((0,z(x))\sqrt{h} \ge \frac{1}{h^{1/2}}\left( z(x) + \frac{q_1}{h} z(x)^3 + q_2h + q_3h^2+q_4z(x)^2\right),
\end{equation}
where $q_1, q_2, q_3, q_4$ are coefficients depending on the first six moments of the function $y \mapsto e^{-|y|^2}$. By multiplying both sides by $\sqrt{h}$ we get
\begin{equation}\label{eq:final_est}
\omega((0, z(x)) h - \left(q_2h + q_3 h^2 + q_4z(x)^2\right) \ge z(x) + \frac{q_1}{h}z(x)^3.
\end{equation}
By applying \cite[Lemma 6.1]{Fuchs2022} (which holds true even if we additionally consider a bounded drift $\omega$), we have that $z(x) = O(h^{3/2})$. In particular, for $h$ small enough
\begin{equation}
\frac{1}{2} < 1-\frac{q_1}{h}z(x)^2,
\end{equation}
in other words
\begin{equation}
2\omega((0, z(x)) h - 2\left(q_2h + q_3 h^2 + q_4z(x)^2\right) \ge z(x) \ge 0,
\end{equation}
from which we conclude that $z(x) = O(h)$.
\end{proof}

\begin{proof}[Proof of Corollary \ref{corollary:thresholding_ball}]
Denote by $\tilde{C}_{r, x_0}$ the constant obtained by applying Theorem~\ref{thm:thresholding_one_step} to $\Omega_0 = B_{r}(x_0)$. Since $\tilde{C}_{r, x_0}$ depends on $\Omega_0$ only through the $C^0$ norm of the second fundamental form $S_{r,x_0}$ of $\partial B_{r}(x_0)$, it is sufficient to show that this can be bounded independently of $\frac{R}{2} \le r \le R$ and $x_0 \in M$. We clearly have that
\begin{equation}
(0, \operatorname{diam}(M)) \times M \ni (r, x_0) \mapsto \Vert S_{r, x_0} \Vert_{C^0}
\end{equation}
is a continuous function. It is thus bounded on the compact set
\begin{equation}
W := \left\{ (r, x) \in (0,+\infty) \times M: \frac{R}{2} \le r \le R, x \in M \right\}.\qedhere
\end{equation}
\end{proof}

\begin{proof}[Proof of Theorem~\ref{thm:consist}]
For ease of notation, let us assume that $\kappa = 1$. We start by observing that
\begin{equation}
\int_{M \setminus B_{h_n^{\frac{1}{4}}}(z_{h_n})} H(h_n, z_{h_n}, y) \xi(y) d\volm(y) = o\left(\sqrt{h_n}\right).
\end{equation}
This is proved by using the Gaussian bounds from Theorem~\ref{thm:gaussian_bounds}, as we did in \cite[Theorem 3, Step 2]{LauxLelmiI}. In particular, both in \eqref{item:nondeg_grad} and in \eqref{item:deg_grad} of Theorem~\ref{thm:consist} we can replace the domain of integration with
\begin{equation}
\left\{\psi_{h_n}(s_{h_n} - h_n, \cdot) \ge 0\right\} \cap B_{h_n^{\frac{1}{4}}}(z_{h_n}).
\end{equation}
In this way, the sequence of integrals can be computed in normal coordinates around $z_{h_n}$, i.e.,
\begin{align*}
&\int_{\left\{\psi_{h_n}(s_{h_n} - h_n, \cdot) \ge 0\right\} \cap B_{h_n^{\frac{1}{4}}}(z_{h_n})} H(h_n, z_{h_n}, y) \xi(y) d\volm(y)
\\ &= \int_{\left\{\tilde{\psi}_{h_n}(s_{h_n} - h_n, \cdot) \ge 0\right\} \cap B_{h_n^{\frac{1}{4}}}(0)} H(h_n, z_{h_n}, \operatorname{exp}_{z_{n_n}}(y)) \xi(\operatorname{exp}_{z_{n_n}}(y)) \sqrt{\operatorname{det}(g)} dy,
\end{align*}
where we set
\begin{equation}
\tilde{\psi}_{h_n}(t, y) := \psi_{h_n}(t, \exp_{z_{h_n}}(y)),\ y \in B_{\frac{\operatorname{inj}(M)}{2}}(0).
\end{equation}
Using the asymptotic expansion for the heat kernel in Theorem~\ref{thm:asymptotic_exp}, it is easy to see that
\begin{align*}
&\begin{aligned}\int_{\left\{\tilde{\psi}_{h_n}(s_{h_n} - h_n, \cdot) \ge 0\right\} \cap B_{h_n^{\frac{1}{4}}}(0)} H(h_n, z_{h_n}, \operatorname{exp}_{z_{n_n}}(y)) \xi(\operatorname{exp}_{z_{n_n}}(y)) \sqrt{\operatorname{det}(g)} dy
\end{aligned}
\\ &\begin{aligned}
= &\int_{\left\{\tilde{\psi}_{h_n}(s_{h_n} - h_n, \cdot) \ge 0\right\} \cap B_{h_n^{\frac{1}{4}}}(0)} \frac{e^{-\frac{|y|^2}{4h_n}}}{(4\pi h_n)^{k/2}}v_0(z_{h_n}, \operatorname{exp}_{z_{h_n}}(y)) \xi(\operatorname{exp}_{z_{n_n}}(y)) \sqrt{\operatorname{det}(g)} dy
\\ &+ o(\sqrt{h}_n).
\end{aligned}
\end{align*}
In particular, in both \eqref{item:nondeg_grad} and \eqref{item:deg_grad} in Theorem~\ref{thm:consist} the integrals may be substituted with 
\begin{align*}
\int_{\left\{\tilde{\psi}_{h_n}(s_{h_n} - h_n, \cdot) \ge 0\right\} \cap B_{h_n^{\frac{1}{4}}}(0)} \frac{e^{-\frac{|y|^2}{4h_n}}}{(4\pi h_n)^{k/2}}v_0(z_{h_n}, \operatorname{exp}_{z_{h_n}}(y)) \xi(\operatorname{exp}_{z_{n_n}}(y)) \sqrt{\operatorname{det}(g)} dy.
\end{align*}
These integrals may be furthermore decomposed into the sums $\mathbb{I}_n + \mathbb{II}_n$,
\begin{align}
&\begin{aligned}
\mathbb{I}_n := \int_{\left\{\tilde{\psi}_{h_n}(s_{h_n} - h_n, \cdot) \ge 0\right\} \cap B_{h_n^{\frac{1}{4}}}(0)} \frac{e^{-\frac{|y|^2}{4h_n}}}{(4\pi h_n)^{k/2}}dy,
\end{aligned}
\\ &\begin{aligned}
\mathbb{II}_n :=\ &\int_{\left\{\tilde{\psi}_{h_n}(s_{h_n} - h_n, \cdot) \ge 0\right\} \cap B_{h_n^{\frac{1}{4}}}(0)} \frac{e^{-\frac{|y|^2}{4h_n}}}{(4\pi h_n)^{k/2}}(w_n(y)-1)dy,
\end{aligned}
\end{align}
where we define
\begin{equation}
w_n(y) := v_0(z_{h_n}, \operatorname{exp}_{z_{h_n}}(y)) \xi(\operatorname{exp}_{z_{n_n}}(y)) \sqrt{\operatorname{det}(g)}.
\end{equation}
We claim that
\begin{equation}\label{eq:claim_second_int}
\lim_{n \to +\infty} \mathbb{II}_n = \begin{cases}
0\ &\text{if}\ \nabla \psi(s, z) = 0,
\\ \frac{1}{2\sqrt{\pi}|\nabla \psi(s, z)|} \langle \frac{\nabla \xi}{\xi}(z), \nabla \psi(s, z) \rangle\ &\text{otherwise}.
\end{cases}
\end{equation}
Using \eqref{eq:first_coefficient} we see that
\begin{equation}
w_n(y) = \sqrt{\frac{\xi(\operatorname{exp}_{z_{n_n}}(y))\operatorname{det}(g)}{\xi(z_{h_n}) \operatorname{det}(d_{\operatorname{exp}_{z_{h_n}}^{-1}(y)}(\operatorname{exp}_{z_{h_n}}))}}.
\end{equation}
In particular, denoting $\tilde{\xi}_n = \xi \circ \operatorname{exp}_{z_{h_n}}$ and $D_n := \operatorname{det}(d_{\operatorname{exp}_{z_{h_n}}^{-1}(y)}(\operatorname{exp}_{z_{h_n}}))$ we get
\begin{equation}
Dw_n = \frac{1}{2w_n(y)}\frac{\left( (D_y\tilde{\xi}_n)\operatorname{det}(g) + \tilde{\xi}_n D_y\operatorname{det}(g))\tilde{\xi}_n(0) D_n - \tilde{\xi}_n \operatorname{det}(g) \tilde{\xi}_n(0)D_yD_n \right)}{\tilde{\xi}_n(0)^2 D_n^2}.
\end{equation}
We now recall that, in normal coordinates $g(z_{h_n}) = Id$, $Dg(z_{h_n}) = 0$, in particular
\begin{equation}
Dw_n(z_{h_n}) = \frac{1}{2}\frac{D\tilde{\xi}_n}{\tilde{\xi}_n}(0),
\end{equation}
and by a Taylor expansion
\begin{equation}
Dw_n(y) = 1 + \frac{1}{2}\frac{D\tilde{\xi}_n}{\tilde{\xi}_n}(0) \cdot y + O(|y|^2);
\end{equation}
in particular, we infer that
\begin{equation}
\mathbb{II}_n = \frac{1}{2}\frac{D\tilde{\xi}_n}{\tilde{\xi}_n}(0) \cdot \int_{\left\{\tilde{\psi}_{h_n}(s_{h_n} - h_n, \cdot) \ge 0\right\} \cap B_{h_n^{\frac{1}{4}}}(0)} \frac{e^{-\frac{|y|^2}{4h_n}}}{(4\pi h_n)^{k/2}}y dy + O(h_n).
\end{equation}
Now we claim that
\begin{align}\label{eq:reducted_claim}
&\lim_{n \to +\infty} \frac{1}{2\sqrt{h_n}}\frac{D\tilde{\xi}_n}{\tilde{\xi}_n}(0) \cdot \int_{\left\{\tilde{\psi}_{h_n}(s_{h_n} - h_n, \cdot) \ge 0\right\} \cap B_{h_n^{\frac{1}{4}}}(0)} \frac{e^{-\frac{|y|^2}{4h_n}}}{(4\pi h_n)^{k/2}}y dy
\\ &= \frac{1}{2\sqrt{\pi}|\nabla \psi(s, z)|} \frac{D\tilde{\xi}}{\tilde{\xi}}(0) \cdot D\tilde{\psi}(s,0),\nonumber
\end{align}
where $\tilde{\xi} = \xi \circ \operatorname{exp}_z$. Of course \eqref{eq:reducted_claim} gives \eqref{eq:claim_second_int}. 

To see that \eqref{eq:reducted_claim} holds, we start by changing variable in the integral by setting $y = \frac{y}{\sqrt{h_n}}$, which gives that the argument in the limit equals
\begin{align*}
\frac{1}{2}\frac{D\tilde{\xi}_n}{\tilde{\xi}_n}(0) \cdot \int_{\left\{y|\ \tilde{\psi}_{h_n}(s_{h_n} - h_n, \sqrt{h_n}y) \ge 0\right\} \cap B_{h_n^{-\frac{1}{4}}}(0)} \frac{e^{-\frac{|y|^2}{4}}}{(4\pi)^{k/2}}y dy.
\end{align*}
We now let $R_n$ be a sequence of orthogonal matrices such that $R_n^T e_1 = \frac{D\tilde{\xi}_n(0)}{|D\tilde{\xi}_n(0)|}$ and without loss of generality we assume that the sequence converges to an orthogonal matrix $R$. We change variable by setting $y = R_n^Ty$ and we get that the argument of the limit becomes
\begin{align*}
\frac{|D\tilde{\xi}_n(0)|}{2}\int_{\mathcal{C}_n \cap B_{h_n^{-\frac{1}{4}}}(0)} \frac{e^{-\frac{|y|^2}{4}}}{(4\pi)^{k/2}}y_1 dy,
\end{align*}
where we define
\begin{align*}
\mathcal{C}_n:= \left\{y \in \mathbf{R}^k |\ \tilde{\psi}_{h_n}(s_{h_n} - h_n, R_n\sqrt{h_n}y) \ge 0\right\}.
\end{align*}
We now observe that, by Taylor expanding $\tilde{\psi}_{h_n}(t_{h_n} - \cdot, \cdot)$ around $(0, 0)$
\begin{align*}
\tilde{\psi}_{h_n}(s_{h_n} - h_{n}, R_n\sqrt{h_n} y) = &\delta_{h_n} + \sqrt{h_n}R_n^TD\tilde{\psi}_{h_n}(s_{h_n}, 0)  \cdot y 
\\ &- h_n \partial_s\tilde{\psi}_{h_n}(s_{h_n}, 0) + o(|y|^2 + h_n^2),
\end{align*}
thus
\begin{align*}
\mathcal{C}_n = \bigg\{ y \in \mathbf{R}^k|\ &\frac{\delta_{h_n}}{\sqrt{h_n}} + R_n^TD\tilde{\psi}_{h_n}(s_{h_n}, 0)  \cdot y 
\\ &- \sqrt{h_n} \partial_s\tilde{\psi}_{h_n}(s_{h_n}, 0) + o(\sqrt{h_n}|y|^2 + h_n^{\frac{3}{2}}) \ge 0 \bigg\}.
\end{align*}
Recalling assumption \eqref{eq:delta_conv_rate} this re-reads
\begin{align*}
\mathcal{C}_n = \bigg\{ y \in \mathbf{R}^k|\ & R_n^TD\tilde{\psi}_{h_n}(s_{h_n}, 0)  \cdot y + o(1) \ge 0 \bigg\}.
\end{align*}
Observe also that
\begin{align*}
R_nD\tilde{\xi}_n(0) &= |D\tilde{\xi}_n(0)|e_1 
\\ &= \sqrt{\langle \nabla \xi (z_{h_n}) , \nabla \xi (z_{h_n})\rangle}e_1 \underset{n \to +\infty}{\to} \sqrt{\langle \nabla \xi (z) , \nabla \xi (z)\rangle}e_1,
\end{align*}
but also
\begin{align*}
R_nD\tilde{\xi}_n(0) = D\tilde{\xi} \circ R_n^T(0) = D\xi \circ \operatorname{exp}_{z_{h_n}} \circ R_n^T(0)) \underset{n \to +\infty}{\to} RD(\xi \circ \exp_z)(0).
\end{align*}
In other words we must have $D\tilde{\xi}(0) = |D\tilde{\xi}(0)|R^Te_1$. In particular

\begin{align*}
\lim_{n \to +\infty} \frac{|D\tilde{\xi}_n(0)|}{2}\int_{\mathcal{C}_n \cap B_{h_n^{-\frac{1}{4}}}(0)} \frac{e^{-\frac{|y|^2}{4}}}{(4\pi)^{k/2}}y_1 dy & = \frac{|D\tilde{\xi}(0)|}{2}\int_{\left\{y|\ R^TD\tilde{\psi}(s, 0)\cdot y\ge 0 \right\}} \frac{e^{-\frac{|y|^2}{4}}}{(4\pi)^{k/2}}y_1 dy
\\ & = \frac{|D\tilde{\xi}(0)|}{2}\int_{\left\{y|\ D\tilde{\psi}(s, 0)\cdot y\ge 0 \right\}} \frac{e^{-\frac{|y|^2}{4}}}{(4\pi)^{k/2}}Ry \cdot e_1 dy
\\ & =  \frac{1}{2}\frac{D\tilde{\xi}}{\tilde{\xi}}(0) \cdot \int_{\left\{y|\ D\tilde{\psi}(s, 0)\cdot y\ge 0 \right\}} \frac{e^{-\frac{|y|^2}{4}}}{(4\pi)^{k/2}} y dy.
\end{align*}
If $\nabla \psi(t, z) = 0$, then the last integral is zero, being component-wise the integral over the whole space of on odd-function. Otherwise we change variable according to $y = O^Ty$, where $O$ is an orthogonal matrix such that $OD\tilde{\psi}(s,0) = |D\tilde{\psi}(s,0)|e_1$, which gives that the last integral equals
\begin{align*}
\frac{1}{2}\frac{OD\tilde{\xi}}{\tilde{\xi}}(0) \cdot \int_{\left\{y|\ y_1\ge 0 \right\}} \frac{e^{-\frac{|y|^2}{4}}}{(4\pi)^{k/2}} y dy & = \frac{1}{2}\frac{OD\tilde{\xi}}{\tilde{\xi}}(0) \cdot e_1\frac{1}{\sqrt{\pi}} 
\\ &=  \frac{1}{2\sqrt{\pi}|D\tilde{\psi}(s, 0)|}\frac{D\tilde{\xi}}{\tilde{\xi}}(0) \cdot D\tilde{\psi}(s, 0).
\end{align*}
We are now in a position to prove \eqref{item:nondeg_grad} and \eqref{item:deg_grad} in Theorem~\ref{thm:consist}.

Item \eqref{item:nondeg_grad}. By the discussion above, the left hand side of \eqref{eq:claim_non_deg} may be substituted with

\begin{align*}
&\liminf_{n \to +\infty} \frac{1}{\sqrt{h_n}}\left(\frac{1}{2} - \mathbb{I}_n - \mathbb{II}_n\right) 
\\ &\ge \liminf_{n\to +\infty} \frac{1}{\sqrt{h_n}}\left( \frac{1}{2} - \mathbb{I}_n\right) - \frac{1}{2\sqrt{\pi}|\nabla \psi(s, z)|} \langle \frac{\nabla \xi}{\xi}(z), \nabla \psi(s, z) \rangle,
\end{align*}
where we used \eqref{eq:claim_second_int} in the second line. To estimate 
\begin{align*}
 \liminf_{n\to +\infty} \frac{1}{\sqrt{h_n}}\left( \frac{1}{2} - \mathbb{I}_n\right)
\end{align*}
we can use \cite[Proposition 4.1]{Barles1995} applied with
\begin{align*}
&(t_h, x_h) = (s_h, 0),
\\ &(t, x) = (s, 0),
\\ & \phi_h(t, \cdot) = \tilde{\psi}_{h}(t, \cdot).
\end{align*}
The only difference is that here we do not assume that $\phi(t_h, x_h) = 0$, but $\phi(t_h, x_h) = o(\sqrt{h})$ - one can check that the result holds true also with this modification by the same proof of  \cite[Proposition 4.1]{Barles1995}. In particular, we get

\begin{align*}
&\liminf_{n \to +\infty} \frac{1}{\sqrt{h_n}}\bigg(\frac{1}{2} - \int_{\left\{\psi_{h_n}(t_{h_n} - h_n, \cdot) \ge 0\right\}} H(h_n, z_{h_n}, y) \xi(y)d\volm\bigg)
\\ &\ge \frac{1}{2\sqrt{\pi}|D\tilde{\psi}(s, 0)|}\left(\partial_t \tilde{\psi} + \Delta \tilde{\psi} - \frac{D\tilde{\psi} \cdot D^2\tilde{\psi}D\tilde{\psi}}{|D\tilde{\psi}|^2} - \frac{D\tilde{\xi}}{\tilde{\xi}} \cdot D\tilde{\psi} \right),
\end{align*}
which is equal to the right hand side of \eqref{eq:claim_non_deg} because we are using exponential coordinates around $z$ (recall our convention $\Delta = - \sum_{i=1}^k \partial^2_{ii}$).

Item \eqref{item:deg_grad}. Once again, by the above discussion, we can assume that

\begin{equation}
\frac{1}{2} - \mathbb{I}_n \le o(\sqrt{h_n}),
\end{equation}
and the result follows by applying \cite[Proposition 4.1]{Barles1995} with
\begin{align*}
&(t_h, x_h) = (s_h, 0),
\\ &(t, x) = (s, 0),
\\ & \phi_h(t, \cdot) = \tilde{\psi}_{h}(t, \cdot).
\end{align*}
In this case, there are two differences from the original version \cite[Proposition 4.1]{Barles1995}. First of all, we again do not assume that $\phi_h(t_h, x_h) = 0$, but we assume $\phi_h(t_h, x_h) = o(\sqrt{h})$. Then, we assume that  $\frac{1}{2} - \mathbb{I}_n \le o(\sqrt{h_n})$ and not the stronger $\frac{1}{2} - \mathbb{I}_n \le 0$. But a quick inspection of the proof of \cite[Proposition 4.1]{Barles1995} reveals that these changes are irrelevant for the argument to work.
\end{proof}

\section{Appendix}\hypertarget{sec:appendix}{}

\subsection{Results on weighted manifolds}

Hereafter we collect some results about weighted Laplacians and heat kernels on closed manifolds. Let $(M, g)$ be a $k$-dimensional, compact Riemannian manifold endowed with a measure $\mu := \xi \volm$, with $\xi \in C^{\infty}(M)$, $\xi > 0$. We denote by $\Delta_{\xi}$ the associated Laplacian, which is defined on $f \in C^{\infty}(M)$ as

\begin{align*}
\Delta_{\xi}f := -\frac{1}{\xi}\operatorname{div}\left(\xi\nabla f\right).
\end{align*}
We denote by $H$ the corresponding heat kernel, i.e., $H$ is a real valued function defined on $(0, +\infty) \times M \times M$ such that for any $u \in L^2(M)$ the function

\begin{equation}
e^{-t\Delta_{\xi}}u(x) := T(t)u(x) = \int_M H(t, x, y)u(y)d\mu(y),\nonumber
\end{equation}
defined for $(t, x) \in (0, +\infty) \times M$, is the unique solution to the Cauchy problem

\begin{equation}
\begin{cases}
\partial_t v = -\Delta_{\xi}v\ &\text{in}\ (0, +\infty) \times M,
\\
v(0, x) = u(x) &\text{on}\ M,
\end{cases}\nonumber
\end{equation}
where the initial value at $t=0$ is attained in the sense that

\begin{equation}
\lim_{t\downarrow 0} e^{-t\Delta_{\xi}}u = u\ \text{in}\ L^2(M).
\end{equation}
We will use the following results.

\begin{theorem}\label{thm:controlling_norm_ef}
Let $M$, $\xi$ be as above. Let $f$ be an $L^2(\xi)$-normalized eigenfunction of $\Delta_{\xi}$ corresponding to the eigenvalue $\lambda$, then for each integer $m \ge 0$
\begin{equation}
\Vert f \Vert_{C^m(M)} \le C_{M, m} \left( \lambda^{m+1+\frac{k}{2}} + 1\right).
\end{equation}
\end{theorem}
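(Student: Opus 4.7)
The strategy is the classical one for eigenfunction regularity: bootstrap elliptic regularity from the eigenfunction equation $\Delta_\xi f = \lambda f$, then convert the resulting Sobolev estimate into a $C^m$ bound via Sobolev embedding. Since $\xi \in C^\infty(M)$ is strictly positive on the compact manifold $M$, the operator
\begin{equation*}
\Delta_\xi u = -\Delta u - g\!\left(\tfrac{\nabla \xi}{\xi},\nabla u\right)
\end{equation*}
is a uniformly elliptic second-order operator with smooth coefficients, so the standard $L^2$-based elliptic theory on closed manifolds applies.

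First, I would record the a priori estimate
\begin{equation*}
\|u\|_{H^{s+2}(M)} \le C_s\!\left( \|\Delta_\xi u\|_{H^s(M)} + \|u\|_{H^s(M)}\right)
\end{equation*}
valid for every $u \in H^{s+2}(M)$ and every $s \ge 0$, with $C_s$ depending only on $s$, $\xi$, and the geometry of $M$. This is the usual consequence of local Calder\'on--Zygmund / G\aa rding estimates, glued together by a partition of unity on $M$.

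Second, I would iterate this estimate. Since $\Delta_\xi^j f = \lambda^j f$, applying the elliptic estimate $j$ times and using $\|f\|_{L^2(M)} \le C_M \|f\|_{L^2(\xi\,d\mathrm{Vol}_M)} = C_M$ (which uses that $\xi$ is bounded away from $0$) yields
\begin{equation*}
\|f\|_{H^{2j}(M)} \le \tilde{C}_j\,(\lambda^j + 1).
\end{equation*}
Then the Sobolev embedding $H^s(M) \hookrightarrow C^m(M)$, valid for $s > m + k/2$, gives $\|f\|_{C^m(M)} \le C'_{M,m,j}(\lambda^j+1)$ as soon as $2j > m + k/2$. The smallest integer $j$ with this property satisfies $j \le m+1+k/2$, which produces the announced bound.

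There is no real obstacle: the only items that deserve care are (a) the constants in the iteration depending only on $M$, $\xi$, and $m$ (not on $f$ or $\lambda$), which is automatic because the coefficients of $\Delta_\xi$ are fixed and smooth; and (b) the fact that the exponent $m+1+k/2$ is deliberately generous (the optimal bootstrap actually gives $\lambda^{m/2+k/4+1}$), but this suboptimal form suffices for every later use of the estimate in the paper.
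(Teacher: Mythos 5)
Your argument is exactly the one the paper has in mind: the paper disposes of this theorem in one sentence ("Sobolev embedding plus $L^2$ elliptic regularity"), and your bootstrap of the a priori estimate $\Vert u \Vert_{H^{s+2}} \lesssim \Vert \Delta_\xi u\Vert_{H^s} + \Vert u \Vert_{H^s}$ along $\Delta_\xi^j f = \lambda^j f$, followed by $H^{2j} \hookrightarrow C^m$ for $2j > m + k/2$, is the standard way to fill in the details; the bookkeeping $(\lambda+1)^j \lesssim \lambda^j + 1$ and $j \le m+1+\frac{k}{2}$ is correct. No gaps.
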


\begin{theorem}\label{thm:gaussian_bounds}
Let $M$, $\xi$ be as above. There exists constants $Q_1, Q_2, Q_3, Q_4, \hat{Q}_1, \hat{Q}_2 > 0$ such that for every $t > 0$ and all $x, y \in M$,
\begin{align}\label{eq:gauss_up_bds_I}
\frac{Q_1}{\mu(B_{\sqrt{t}}(x))} e^{-\frac{d_M^2(x,y)}{Q_2t}} \le H(t,x,y) \le \frac{Q_3}{\mu(B_{\sqrt{t}}(x))} e^{-\frac{d_M^2(x,y)}{Q_4t}}. 
\end{align}
\begin{align}\label{eq:gauss_up_bds_II}
|\nabla_x H(t,x,y)| \le \frac{\hat{Q}_1}{\sqrt{t} \mu(B_{\sqrt{t}}(x))} \operatorname{exp}\left( -\frac{d_M^2(x,y)}{\hat{Q}_2 t}\right).
\end{align}
\end{theorem}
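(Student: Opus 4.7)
The plan is to reduce the Gaussian bounds for the weighted heat kernel $H$ to classical estimates on compact Riemannian manifolds via the abstract theory of metric measure Dirichlet forms. Since $M$ is compact and $\xi \in C^\infty(M)$ is strictly positive, there are constants $0 < c_1 \le c_2 < \infty$ with $c_1 \le \xi \le c_2$. Consequently $\mu = \xi \volm$ is comparable to the Riemannian volume $\volm$, and the Dirichlet form $\mathcal{E}_\xi(f,f) = \int_M |\nabla f|_g^2\, \xi\, d\volm$ is comparable to the unweighted one $\mathcal{E}(f,f) = \int_M |\nabla f|_g^2\, d\volm$.

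First I would verify that $(M, d_M, \mu)$ satisfies volume doubling uniformly in $x \in M$ and $r \in (0, \operatorname{diam}(M)]$: this follows immediately by comparability of $\mu$ and $\volm$, together with the well-known local volume doubling on compact smooth Riemannian manifolds (with constants depending only on curvature and dimension). Next I would establish the $L^2$-Poincar\'e inequality on balls for the form $\mathcal{E}_\xi$ and measure $\mu$; again, by comparability this reduces to the standard Poincar\'e inequality on balls of a compact Riemannian manifold.

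Having volume doubling and Poincar\'e in hand, the two-sided bound \eqref{eq:gauss_up_bds_I} follows from the classical equivalence due to Grigor'yan and Saloff-Coste, which states that on a regular strongly local Dirichlet space these two properties are equivalent to the parabolic Harnack inequality and to two-sided Gaussian estimates of precisely the stated form. All constants depend only on the doubling and Poincar\'e constants, hence only on $M$ and $\xi$.

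For the gradient bound \eqref{eq:gauss_up_bds_II}, I would appeal to the Bakry--\'Emery formalism: the weighted Laplacian $\Delta_\xi$ satisfies a curvature-dimension condition $CD(K, \infty)$ for some $K \in \mathbf{R}$, because the Bakry--\'Emery Ricci tensor $\operatorname{Ric} + \nabla^2 \log \xi$ is bounded below on compact $M$. This yields a Li--Yau type differential Harnack inequality for positive solutions of $\partial_t u + \Delta_\xi u = 0$, which combined with the upper Gaussian bound from the previous step and a standard Caccioppoli argument (or the parabolic Harnack inequality applied to $H(\cdot,\cdot,y)$ on the cylinder $(t/2, t) \times B_{\sqrt{t}}(x)$) produces the stated pointwise bound. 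The main obstacle in writing this out carefully is bookkeeping the constants so that the same pair $(\hat{Q}_1, \hat{Q}_2)$ serves uniformly for all $t > 0$ and all $x, y \in M$; this is however forced by compactness, since the large-$t$ regime is trivial (the heat kernel converges to $1/\mu(M)$ exponentially fast and its gradient to zero) and only the small-time regime genuinely requires the local Li--Yau argument.
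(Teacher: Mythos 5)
Your argument is correct, and for the two-sided bound \eqref{eq:gauss_up_bds_I} it takes a genuinely different (and more abstract) route than the paper. The paper disposes of the whole theorem with a single reference: both the two-sided Gaussian bounds and the gradient estimate are deduced from the Li--Yau differential Harnack inequality for weighted manifolds \cite{Setti1992}, which applies here because the (finite-dimensional) Bakry--\'Emery Ricci tensor of $(M,g,\xi\volm)$ is bounded below by compactness and smoothness of $\xi>0$. You instead obtain \eqref{eq:gauss_up_bds_I} from the Grigor'yan--Saloff-Coste characterization (volume doubling plus scale-invariant $L^2$-Poincar\'e $\Leftrightarrow$ parabolic Harnack $\Leftrightarrow$ two-sided Gaussian bounds), verifying the hypotheses by comparability $c_1\le\xi\le c_2$; this is perfectly valid -- note that the intrinsic metric of the form $\int|\nabla f|^2\xi\,d\volm$ with reference measure $\xi\volm$ is still $d_M$, so the bounds come out in exactly the stated form -- and it buys robustness (no curvature computation is needed, only doubling and Poincar\'e), at the cost of invoking heavier abstract machinery than the compact smooth setting requires. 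For \eqref{eq:gauss_up_bds_II} you essentially rejoin the paper's route via Li--Yau. Two small imprecisions worth fixing if you write this out: the classical Li--Yau differential Harnack needs the \emph{finite-dimensional} condition $CD(K,N)$ with $N>k$ (which does hold here, since $\operatorname{Ric}-\nabla^2\log\xi-\tfrac{1}{N-k}\nabla\log\xi\otimes\nabla\log\xi$ is bounded below on compact $M$), not merely $CD(K,\infty)$ as you state; and a Caccioppoli argument only yields $L^2$ bounds on $\nabla_x H$, so for the pointwise bound you should rely on the gradient form of the Li--Yau/Hamilton estimate (or interior parabolic Schauder estimates) rather than on Caccioppoli. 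Your observation that the large-time regime is trivial by convergence of $H(t,\cdot,\cdot)$ to $1/\mu(M)$ correctly handles the uniformity in $t>0$.
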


\begin{theorem}\label{thm:asymptotic_exp}
Let $M$, $\xi$ be as above. There exist functions $v_j \in C^{\infty}(M \times M), j \in \mathbf{N}$, such that for every $N> l + \frac{k}{2}$ there exists a constant $\tilde{C}_N < \infty$ such that
\begin{equation}\label{eq:expansion}
\left| \nabla^l \left( H(t, x, y) - \frac{e^{-\frac{d_M^2(x,y)}{4t}}}{(4\pi t)^{k/2}}\sum_{j=0}^{N}v_j(x,y)t^j\right) \right| \le \tilde{C}_N t^{N+1-\frac{k}{2}},
\end{equation} 
provided $d(x,y) \le \frac{\operatorname{inj}(M)}{2}$. Moreover we have
\begin{equation}\label{eq:first_coefficient}
v_0(x,y) = \frac{1}{\sqrt{\xi(x) \xi(y) \operatorname{det}(d_{\operatorname{exp}^{-1}_{x}(y)}\operatorname{exp}_{x})}} .
\end{equation}
\end{theorem}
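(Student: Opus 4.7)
The approach is the classical parametrix construction adapted to the weighted setting. Working in geodesic normal coordinates around $y$, set $r = d(x,y)$ and $\theta_y(x) := \det(d_{\exp_y^{-1}(x)}\exp_y) = \sqrt{\det g_{ij}(x)}$. Writing $\Delta_\xi f = -\operatorname{div}(\nabla f) - g(\nabla\log\xi,\nabla f)$ and setting $G(t,r) := (4\pi t)^{-k/2} e^{-r^2/(4t)}$, a direct computation for any time-independent $v = v(x,y)$ yields
\begin{equation*}
(\partial_t+\Delta_\xi^x)(Gv) = G\left[\frac{r}{t\sqrt{\xi\theta_y}}\partial_r\bigl(\sqrt{\xi\theta_y}\,v\bigr) + \Delta_\xi^x v\right].
\end{equation*}
Plugging the ansatz $p_N = G\sum_{j=0}^N v_j\, t^j$ and collecting powers of $t$ produces the transport hierarchy
\begin{equation*}
\partial_r(\sqrt{\xi\theta_y}\,v_0) = 0,\qquad r\,\partial_r(\sqrt{\xi\theta_y}\,v_j) + j\sqrt{\xi\theta_y}\,v_j = -\sqrt{\xi\theta_y}\,\Delta_\xi^x v_{j-1}\quad (j\ge 1)
\end{equation*}
along each radial geodesic from $y$. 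The first equation plus smoothness at $y$ forces $v_0 = c(y)/\sqrt{\xi\theta_y}$, and the normalization $v_0(y,y)\,\xi(y)=1$ (required so that $p_0(t,\cdot,y)\,\xi\,\volm \to \delta_y$ as $t\downarrow 0$) pins $c(y)=\xi(y)^{-1/2}$. This gives $v_0(x,y)=(\xi(x)\xi(y)\theta_y(x))^{-1/2}$, equivalent to the stated formula via the symmetry $\theta_y(x)=\theta_x(y)$ (forced by $H(t,x,y)=H(t,y,x)$). Each subsequent equation is a first-order radial ODE solved explicitly by
\begin{equation*}
v_j(x,y) = -\frac{1}{r^j\sqrt{\xi\theta_y}(x)}\int_0^r s^{j-1}\sqrt{\xi\theta_y}(\exp_y(s\omega))\,\Delta_\xi^x v_{j-1}(\exp_y(s\omega),y)\,ds
\end{equation*}
with $\omega=\exp_y^{-1}(x)/r$, producing smooth coefficients $v_j$ throughout $\{d(x,y)<\operatorname{inj}(M)\}$.

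By construction the residual collapses to the single term $\mathcal{R}_N(t,x,y) := (\partial_t+\Delta_\xi^x)p_N = G(t,r)\,t^N\,\Delta_\xi^x v_N(x,y)$, so $|\mathcal{R}_N| \le C_N\, t^{N-k/2}\, e^{-r^2/(4t)}$ near the diagonal. Next I would globalize $p_N$ by multiplying with a smooth cut-off $\chi(d(x,y))$ supported in $\{d<\operatorname{inj}(M)\}$; the residual gains additional terms supported in an annulus away from the diagonal, of size $O(e^{-c/t})$ uniformly. Duhamel's formula then reads
\begin{equation*}
H(t,x,y) - \chi(d(x,y))\,p_N(t,x,y) = -\int_0^t\!\int_M H(t-s,x,z)\,\mathcal{R}_N(s,z,y)\,\xi(z)\,d\volm(z)\,ds.
\end{equation*}
Combining the Gaussian upper bound of Theorem~\ref{thm:gaussian_bounds} with the convolution estimate $\int_0^t s^{N-k/2}(t-s)^{-k/2}e^{-cd^2/(t-s)}\,ds \lesssim t^{N+1-k/2}$ (essentially the heat semigroup property enhanced by the $s^N$ factor) yields $|H-p_N|\lesssim t^{N+1-k/2}$ on the tubular neighborhood $\{d<\operatorname{inj}(M)/2\}$, settling the $l=0$ case.

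For $l$-th spatial derivatives under the hypothesis $N>l+\frac{k}{2}$, the standard trick is to work with a longer parametrix: fix $N'\ge N+l$ and decompose $H-p_N = (p_{N'}-p_N) + (H-p_{N'})$. The first summand is the explicit finite sum $G\sum_{j=N+1}^{N'} v_j t^j$; since $\nabla^l G$ scales like $t^{-l/2}G$, its $l$-th derivative is $O(t^{N+1-l/2-k/2})$, which is controlled by $t^{N+1-k/2}$ precisely when $l\le 2(N+1-k/2)$, which is exactly the stated hypothesis. For the second summand, parabolic interior Schauder estimates applied to $(\partial_t+\Delta_\xi^x)(H-p_{N'}) = -\mathcal{R}_{N'}$ with vanishing initial data promote the sup-norm bound to an $l$-th derivative bound with a $t^{-l/2}$ loss, still dominated by $\tilde{C}_N t^{N+1-k/2}$ thanks to the surplus $t^{N'-N}$. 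The main technical obstacle will be the Duhamel convolution: the simultaneous singularities of $\mathcal{R}_N$ and $H$ at short times and near the diagonal force one to track the full Gaussian tail rather than use crude $L^\infty$ bounds to extract the sharp exponent $N+1-k/2$; once the zeroth-order case is in hand, the higher-derivative estimates reduce to standard parabolic regularity.
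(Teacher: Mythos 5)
Your derivation of the transport hierarchy and of $v_0$ is correct and is essentially the same computation the paper sketches (the paper expands around $x$ and lets $\Delta_\xi$ act in the $y$-variable, so it reads off $\det(d_{\exp_x^{-1}(y)}\exp_x)$ directly, whereas you expand around $y$ and invoke the symmetry $\theta_y(x)=\theta_x(y)$ -- fine, though you should justify that symmetry by the classical properties of the Jacobian of $\exp$ rather than by $H(t,x,y)=H(t,y,x)$, which is circular at the stage where the parametrix has not yet been related to $H$). Your $l=0$ error estimate (cut-off, Duhamel, Gaussian bounds) is the standard argument that the paper delegates verbatim to Rosenberg; in fact it is even easier than you suggest, since $\int_M H(t-s,x,z)\,d\mu(z)=1$ lets you use the crude bound $\sup_z|\mathcal{R}_N(s,z,y)|\lesssim s^{N-k/2}$ and integrate in $s$, so no Gaussian-tail bookkeeping is needed for the exponent $N+1-k/2$.

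The gap is in the derivative case $l\ge 1$. For the first summand of your decomposition you claim that $\nabla^l(p_{N'}-p_N)=O(t^{N+1-l/2-k/2})$ ``is controlled by $t^{N+1-k/2}$ precisely when $l\le 2(N+1-k/2)$.'' This power counting is wrong: for $t\downarrow 0$ one has $t^{N+1-k/2-l/2}\le C\,t^{N+1-k/2}$ only if $l=0$; the condition $l\le 2(N+1-k/2)$ merely makes the exponent nonnegative (and it is not the hypothesis $N>l+\tfrac k2$ either). Moreover this step cannot be repaired, because the loss is real: near the diagonal (at $d(x,y)\sim\sqrt t$) the first omitted term $G_t\,v_{N+1}\,t^{N+1}$ already has $l$-th derivatives of size $t^{N+1-k/2-l/2}$, so the estimate \eqref{eq:expansion} with the $l$-independent exponent $N+1-\tfrac k2$ is not attainable for $l\ge1$; the statement one actually gets from your (correct) treatment of the second summand together with an honest bound on the first is $|\nabla^l(H-p_N)|\le \tilde C_N\,t^{N+1-\frac k2-\frac l2}$, which is the standard Rosenberg-type conclusion. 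This discrepancy is harmless for the paper, which only ever applies the expansion with $l=0$ (plus the separate gradient bound of Theorem~\ref{thm:gaussian_bounds}), but your write-up should either prove the estimate with the $t^{-l/2}$ loss or restrict to $l=0$; as it stands the claimed justification of \eqref{eq:expansion} for $l\ge1$ does not go through.
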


Theorem~\ref{thm:controlling_norm_ef} follows by the Sobolev embedding theorem and the $L^2$-regularity theory for elliptic equations on manifolds. Theorem~\ref{thm:gaussian_bounds} follows from the Li--Yau inequality for weighted manifolds \cite{Setti1992}. The asymptotic expansion in Theorem~\ref{thm:asymptotic_exp} follows by constructing the heat kernel by means of the \emph{parametrix} method: this construction is technical and we refer to \cite{Rosenberg1997}, where this is carried out for the case $\xi = 1$. Here we just sketch the first part of the construction for a general density $\xi$, which gives \eqref{eq:first_coefficient}. The idea is that when $x, y$ are close enough, say $d(x, y) < \frac{\operatorname{inj}(M)}{2}$, a good approximation for the heat kernel should be given by

\begin{equation}\label{eq:ansatz}
H_N(t, x, y) := G_t(x, y)\left( v_0(x,y) + . . . + t^Nv_N(x, y)\right),
\end{equation}
for smooth functions $v_j$ and $t > 0$. Here
\begin{equation}
G_t(x, y) := \frac{e^{-\frac{d_M^2(x,y)}{4t}}}{(4\pi t)^{k/2}}.
\end{equation}
Since the Ansatz \eqref{eq:ansatz} should be an approximation of the heat kernel, we would like to have
\begin{align}\label{eq:param_goal}
0 = \partial_t H_N + \Delta_{\xi}H_N,
\end{align}
where $\Delta_\xi$ denotes the weighted Laplacian with respect to the $y$-variable. 
We now compute the right hand side of the above equation by using exponential coordinates around $x$: we denote them by $(r, \theta) \in [0,R) \times \mathbb{S}^{k-1}$. Observe that

\begin{align*}
\partial_t H_N &= \partial_t G_t (v_0 + . ..  + t^N v_N) + G_t (v_1 + . . .  + Nt^{N-1}v_N)
\\ & = \left( \frac{r^2}{4t^2} - \frac{k}{2t}\right) G_t(v_0 + . ..  + t^N v_N) + G_t (v_1 + . . .  + Nt^{N-1}v_N).
\end{align*}
Furthermore
\begin{align*}
\Delta_{\xi}H_N =\ &G_t\left( \Delta_{\xi}v_0 + . . . + t^N\Delta_{\xi}v_N\right) 
\\ &+ \Delta_{\xi}G_t (v_0 + . . .  +t^Nv_N) - 2\langle \nabla G_t, \left( \nabla v_0 + . . . + t^N\nabla v_N\right)\rangle.
\end{align*}
Using Gauss' Lemma and the fact that $G_t$ is independent of $\theta$ we get
\begin{align*}
 2\langle \nabla G_t, \left( \nabla v_0 + . . . + t^N\nabla v_N\right)\rangle &= 2 \partial_r G_t (\partial_r v_0 + . . .  +t^N \partial_rv_N)
 \\ &= -\frac{r}{t}G_t (\partial_r v_0 + . . .  +t^N \partial_rv_N).
\end{align*}
We also observe that by definition of $\Delta_{\xi}$ and by using again Gauss' Lemma and the independence of $G_t$ from $\theta$
\begin{align*}
\Delta_{\xi} G_t &= \Delta G_t - \langle \frac{\nabla\xi}{\xi}, \nabla G_t\rangle = \Delta G_t +\frac{r}{2t} \frac{\partial_r\xi}{\xi}G_t.
\end{align*}
We define
\begin{equation}
D(y) :=\operatorname{det}(d_{\operatorname{exp}^{-1}_{x}(y)}\operatorname{exp}_{x}).
\end{equation}
Using the expression of the Laplacian in spherical coordinates and the invariance of $G_t$ with respect to $\theta$ we get

\begin{align*}
\Delta G_t = -\frac{\partial^2G_t}{\partial r^2} - \partial_rG_t\left(\frac{\partial_rD}{D} + \frac{k-1}{r}\right) = -\left( \frac{r^2}{4t^2} - \frac{k}{2t}\right) G_t + \frac{r}{2t}\frac{\partial_r D}{D}G_t.
\end{align*}
Putting things together we have
\begin{align}\label{eq:final_comp_approx}
\partial_t H_N + \Delta_{\xi}H_N = G_t \bigg(& (v_1 + . . . + Nt^{N-1}v_N) - (\Delta_{\xi}v_0 + . . . t^N \Delta_{\xi}v_N)
\\ & + \frac{r}{2t}\partial_r \operatorname{log}(D\xi) (v_0 + . . . + t^Nv_N) + \frac{r}{t} (\partial_r v_0 + . . . + \partial_r v_N))\bigg).\nonumber
\end{align}
Although we cannot get \eqref{eq:param_goal} exactly, we can choose $v_j$ in such a way that 
\begin{equation}
\partial_t H_N + \Delta_{\xi}H_N = G_t t^N \Delta_{\xi} v_N.
\end{equation}
In other words, we choose the coefficients in such a way that

\begin{align}
\frac{r}{2t}\partial_r\operatorname{log}(D\xi) v_0 + \frac{r}{t}\partial_r v_0 &= 0,\label{eq:first}
\\ jt^{j-1}v_j - t^{j-1}\Delta_{\xi}v_{j-1} + t^{j-1}\frac{r}{2}\partial_r\operatorname{log}(D\xi)v_j + rt^{j-1}\partial_rv_j &= 0,\ \text{for}\ 1 \le j \le N.\label{eq:higher}
\end{align}
Once one solves \eqref{eq:first}, one can show inductively that \eqref{eq:higher} admits a smooth solution $v_j$. It is easily seen that \eqref{eq:first} can be solved explicitly to give

\begin{equation}
v_0(x,y) =  \frac{1}{\sqrt{\xi(x) \xi(y) \operatorname{det}(d_{\operatorname{exp}^{-1}_{x}(y)}\operatorname{exp}_{x})}} .
\end{equation}
From here, the construction of the heat kernel and the estimate \eqref{eq:expansion} follow verbatim as in \cite{Rosenberg1997}.

\subsection{Results on random geometric graphs}

In this subsection we use the setting and the notation of Section \ref{sec:mainres}, with the points $\{x_i\}_{i=1}^{+\infty}$ being given by i.i.d.\ random points on $M$, distributed according to a probability distribution $\nu = \rho\volm \in \mathcal{P}(M)$, with $\rho \in C^{\infty}(M)$, $\rho > 0$. The following two results are proved in \cite{Calder2022, Calder2022a} for the unnormalized graph Laplacian, but the proof of the statements extends when we work with the random walk Laplacian. Hereafter, given $l \in \mathbf{N}$, we set
\begin{equation}
    \gamma_{l} := \inf_{j < l, j \in \mathbf{N}} (\lambda_{j+1} - \lambda_j).
\end{equation}

\begin{theorem}\label{thm:eigv_eigf_est}
In the above-mentioned setting, if additionally, the eigenvalues of $\Delta_{\rho^2}$ are simple, then for every $\overline{l} \in \mathbf{N}$ we have that with probability greater than 
\begin{align*}
1 - Q_1 \epsilon_n^{-6k}\operatorname{exp}(-Q_2n\epsilon_n^{k+4}) - Q_3n\operatorname{exp}(-Q_4n\left( \lambda_{\overline{l}} + 1 \right)^{-k})
\end{align*}
we have for every $l \le \overline{l}$
\begin{equation}
|\lambda_n^l - \kappa(\eta)\lambda_l| +  \max_{z \in V_n} \left| v_n^l(z) - \frac{f_l(z)}{C_1^{1/2}}\right| \le Q_5 \frac{\Vert f_l \Vert_{C^3(M)}}{\gamma_{l}}\epsilon_n.
\end{equation}
\end{theorem}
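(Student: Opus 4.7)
The plan is to adapt the strategy of Calder--García Trillos~\cite{Calder2022, Calder2022a} from the unnormalized graph Laplacian to the random walk normalization. The argument proceeds in three phases: (i) a uniform consistency estimate for $\Delta_n$ on restrictions of smooth manifold functions, (ii) variational (min--max) convergence of the first $\overline{l}$ eigenvalues, and (iii) an $L^2$ eigenvector convergence upgraded to a pointwise bound via the graph heat semigroup.

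For phase (i), decompose $\Delta_n f(x_i) - \kappa(\eta)\Delta_{\rho^2}f(x_i)$ into a bias and a variance term. The bias term, obtained by replacing the empirical sum by its expectation against $\nu = \rho\volm$, is treated by a third-order Taylor expansion of $f$ around $x_i$: the first-order contribution vanishes by symmetry of $\eta$, the second-order contribution reproduces $\kappa(\eta)\Delta_{\rho^2}f(x_i)$ after accounting for the $\rho$-weighting entering through $d_n$, and the remainder is bounded by $\epsilon_n\|f\|_{C^3(M)}$. The variance term is controlled by Bernstein's inequality applied at each $x_i$, combined with a covering argument on $M$ that exploits the Lipschitz regularity of $\eta$; this yields the factor $Q_1\epsilon_n^{-6k}\exp(-Q_2 n\epsilon_n^{k+4})$ after also invoking Theorem~\ref{thm:density_est} to control the degrees $d_n$ so that the inversion of $D_n$ does not destroy the estimate.

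For phase (ii), apply Courant--Fischer both on $(\mathcal{V}_n, \langle\cdot,\cdot\rangle_{\mathcal{V}_n},\Delta_n)$ and on $(L^2(\rho^2\volm),\Delta_{\rho^2})$. The upper bound on $\lambda_n^l$ is obtained by plugging the restrictions $f_1|_{V_n},\ldots,f_l|_{V_n}$ into the graph Rayleigh quotient and invoking the consistency estimate; the lower bound comes from extending the graph eigenvectors $v_n^1,\ldots,v_n^l$ to functions on $M$ by convolution with $\eta_{\epsilon_n}$ (as in \cite{GarciaTrillos2020}) and comparing Dirichlet energies. The spectral gap $\gamma_l$ is used to disentangle the $l$-th eigenspace from the adjacent ones, yielding $|\lambda_n^l - \kappa(\eta)\lambda_l| \lesssim \epsilon_n\|f_l\|_{C^3(M)}/\gamma_l$. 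A Davis--Kahan--type perturbation argument then gives the matching discrete-$L^2$ bound $\|v_n^l - C_1^{-1/2}f_l|_{V_n}\|_{\mathcal{V}_n} \lesssim \epsilon_n\|f_l\|_{C^3(M)}/\gamma_l$.

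Phase (iii), the $L^2\to L^\infty$ upgrade, is the main obstacle. Exploit the identity $v_n^l = e^{h\lambda_n^l}e^{-h\Delta_n}v_n^l$ with $h \sim (\lambda_{\overline{l}}+1)^{-1}$, and use a uniform estimate $\|e^{-h\Delta_n}u\|_\infty \lesssim (n\,\rho^2\volm(B_{\sqrt{h}}))^{-1/2}\|u\|_{\mathcal{V}_n}$ obtained by pointwise concentration of the graph heat kernel followed by a union bound over the $n$ nodes. This union bound produces the factor $Q_3 n\exp(-Q_4 n(\lambda_{\overline{l}}+1)^{-k})$: the exponent encodes that the ball $B_{\sqrt{h}}$ of volume $\sim(\lambda_{\overline{l}}+1)^{-k/2}$ must contain enough sample points for the kernel concentration to kick in uniformly. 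The smoothing time $h$ must be chosen fine enough not to erase the $\epsilon_n$-gap, yet coarse enough for this concentration to apply; it is this trade-off that forces the probability factor in the statement. Finally, Theorem~\ref{thm:controlling_norm_ef} supplies the manifold-side bound $\|f_l\|_{C^0} \lesssim \|f_l\|_{C^3(M)}$ needed to match the right-hand side of the claimed inequality.
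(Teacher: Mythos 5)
The paper does not actually prove Theorem~\ref{thm:eigv_eigf_est}: it is stated in the appendix as a result imported from \cite{Calder2022, Calder2022a}, with only the one-line remark that the arguments given there for the unnormalized graph Laplacian carry over to the random walk normalization. Your three-phase outline --- pointwise consistency via Taylor expansion plus Bernstein and a covering argument, min--max convergence of eigenvalues with a Davis--Kahan $L^2$ eigenvector bound, and an $L^2\to L^\infty$ upgrade through the graph heat semigroup --- is a faithful reconstruction of the strategy of those references, so in substance it coincides with the route the paper relies on. Two points in phase (iii) deserve attention if you were to write this out. First, the semigroup identity must be applied to the difference $v_n^l - C_1^{-1/2} f_l|_{V_n}$ rather than to $v_n^l$ alone (since $f_l$ is not a graph eigenvector); this produces an additional term of the form $\bigl(e^{h\lambda_n^l}e^{-h\Delta_n}-e^{h\lambda_n^l}e^{-h\kappa(\eta)\Delta_{\rho^2}}\bigr)f_l$ that has to be absorbed using the phase (i) consistency estimate, and your sketch does not mention it. Second, your bookkeeping is internally inconsistent: with $h\sim(\lambda_{\overline{l}}+1)^{-1}$ the ball $B_{\sqrt{h}}$ has volume of order $(\lambda_{\overline{l}}+1)^{-k/2}$, which via the union bound would yield a failure probability $n\exp(-cn(\lambda_{\overline{l}}+1)^{-k/2})$ rather than the stated $n\exp(-Q_4 n(\lambda_{\overline{l}}+1)^{-k})$; matching the theorem requires taking $h\sim(\lambda_{\overline{l}}+1)^{-2}$ (or otherwise correcting the exponent), after which one must also check that the factor $\mu(B_{\sqrt{h}})^{-1/2}$ multiplying the $L^2$ error can be absorbed into $Q_5\Vert f_l\Vert_{C^3(M)}/\gamma_l$. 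Neither issue is fatal, but both are exactly the places where the adaptation has content.
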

\begin{theorem}\label{thm:density_est}
In the above-mentioned setting, if $n$ is large enough, with probability greater than $1-Q_6\epsilon_n^{-k}\operatorname{exp}(-Q_7n \epsilon_n^{k+2})$,
we have that
\begin{equation}
 \max_{z \in V_n} |d_{n, \epsilon_n}(z) - C_1 \rho(z)| \le Q_8 \epsilon_n.
\end{equation}
\end{theorem}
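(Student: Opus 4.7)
The strategy is the standard decomposition of a kernel density estimator into bias plus stochastic fluctuation. For fixed $z \in M$ write $d_n(z) = \frac{1}{n}\sum_{j=1}^n Y_j(z)$, where the i.i.d.\ random variables $Y_j(z) := \epsilon_n^{-k}\eta(\|z-x_j\|_d/\epsilon_n)\mathbf{1}_{\{z\neq x_j\}}$ are bounded by $\|\eta\|_\infty \epsilon_n^{-k}$ and have variance of order $\epsilon_n^{-k}$. The triangle inequality $|d_n(z) - C_1\rho(z)| \le |d_n(z) - \mathbb{E} d_n(z)| + |\mathbb{E} d_n(z) - C_1\rho(z)|$ splits the problem into (i) a deterministic bias estimate, (ii) a pointwise concentration estimate for $d_n(z)$ around its mean, and (iii) a union bound promoting the pointwise statement to one uniform over $V_n$.

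For the bias, $\mathbb{E}[d_n(z)]$ equals $\tfrac{n-1}{n}\int_M \epsilon_n^{-k}\eta(\|z-y\|_d/\epsilon_n)\rho(y)\,d\volm(y)$ (the atomless law of $\nu$ makes the diagonal correction harmless). Since $\eta$ is supported in $[0,1]$, the integrand is supported in the ball of Euclidean radius $\epsilon_n$ around $z$; there we compare the ambient Euclidean distance with the Riemannian distance (which agree up to $O(d_M(z,y)^3)$ via the second fundamental form of $M\subset\mathbf{R}^d$), pass to normal coordinates at $z$, and Taylor-expand $\rho$. The leading term is $\rho(z)\int_{\mathbf{R}^k}\eta(|y|_k)\,dy = C_1\rho(z)$, and all remainders are $O(\epsilon_n)$ uniformly in $z$, with a constant depending only on $\eta$, $\|\rho\|_{C^2}$, and the geometry of $M$. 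For the pointwise concentration, Bernstein's inequality applied to $\sum_j Y_j(z)$ with $t = (Q_8/2)\epsilon_n$ yields
\[
\mathbb{P}\bigl(|d_n(z) - \mathbb{E} d_n(z)| > t\bigr) \;\le\; 2\exp\bigl(-c\,n\epsilon_n^{k+2}\bigr),
\]
with $c$ depending only on $\eta$, $\rho$, $M$, as the variance term $\mathrm{Var}(Y_j)/n \sim 1/(n\epsilon_n^k)$ dominates the Bernstein denominator in the regime of interest.

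To obtain uniformity, pick a net $\mathcal{N}_n \subset M$ at scale $\sim \epsilon_n^2$ of cardinality $\lesssim \epsilon_n^{-2k}$, apply the Bernstein bound at every net point and union-bound. A separate Bernstein-plus-covering argument controls, with failure probability of the same order, the local counts $\#\{j:\|z-x_j\|_d\le 2\epsilon_n\} \lesssim n\epsilon_n^k$ uniformly in $z\in M$. On that event the Lipschitz regularity of $\eta$ on $[0,1]$ implies $|d_n(z)-d_n(z')| \le C\epsilon_n$ whenever $\|z-z'\|_d \le \epsilon_n^2$, so replacing each $z \in V_n$ by its closest net point changes $d_n$ by at most $C\epsilon_n$. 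Combining the bias, the Bernstein bound over the net, and the Lipschitz reduction yields $\max_{z \in V_n} |d_n(z) - C_1\rho(z)| \le Q_8\epsilon_n$ with the asserted probability.

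The main obstacle is sharpening the prefactor from $\epsilon_n^{-2k}$ (which is what the naive net-plus-Lipschitz argument outlined above produces) to the sharper $\epsilon_n^{-k}$ appearing in the statement. One can hope to recover this factor by exploiting the monotonicity of $\eta$ through a bracketing argument: sandwich $\eta(\|z-\cdot\|_d/\epsilon_n)$ between $\eta(\|z_\ell-\cdot\|_d/((1\pm\delta)\epsilon_n))$ for the nearest net point $z_\ell$, so that a coarser $\epsilon_n$-net suffices provided $\delta \sim \epsilon_n$ and the bias of the perturbed kernel is still $O(\epsilon_n)$. This refinement affects only the polynomial prefactor and not the exponential rate $\exp(-cn\epsilon_n^{k+2})$, which is the quantitatively important content of the theorem and which the sketch above already delivers.
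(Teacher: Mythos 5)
First, a remark on the comparison: the paper does not prove Theorem~\ref{thm:density_est} at all --- it is imported from \cite{Calder2022, Calder2022a} (stated there for the unnormalized Laplacian, with the remark that the argument carries over to the random walk normalization). So any self-contained proof is by default a different route, and your bias-plus-Bernstein-plus-covering scheme is the standard correct one in outline: the uniform bias bound $|\mathbb{E}d_n(z)-C_1\rho(z)|=O(\epsilon_n)$ and the pointwise Bernstein bound with exponent $n\epsilon_n^{k+2}$ are fine. Moreover, the issue you single out as the ``main obstacle'' --- the prefactor $\epsilon_n^{-2k}$ from the $\epsilon_n^2$-net versus the stated $\epsilon_n^{-k}$ --- is not an obstacle at all: the constants $Q_6,Q_7,Q_8$ are unspecified, and either $k\log(1/\epsilon_n)\le \tfrac{c}{2}n\epsilon_n^{k+2}$, in which case $\epsilon_n^{-2k}e^{-cn\epsilon_n^{k+2}}\le \epsilon_n^{-k}e^{-\frac{c}{2}n\epsilon_n^{k+2}}$, or the reverse inequality holds, in which case $Q_6\epsilon_n^{-k}e^{-Q_7 n\epsilon_n^{k+2}}\ge 1$ (for $Q_6\ge 1$, $Q_7=c/2$) and the claimed probability bound is vacuous. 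So your weaker prefactor already implies the stated form after adjusting constants.

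The genuine gap is in the transfer from the net to $V_n$. The paper only assumes that $\eta$ is non-increasing, supported in $[0,1]$, and Lipschitz \emph{on} $[0,1]$; it may jump at $1$, e.g.\ $\eta=\mathbf{1}_{[0,1]}$ is admissible. For such $\eta$ the map $z\mapsto \epsilon_n^{-k}\eta(\Vert z-x_j\Vert_d/\epsilon_n)$ is not Lipschitz, and your step ``$\Vert z-z'\Vert_d\le\epsilon_n^2$ implies $|d_n(z)-d_n(z')|\le C\epsilon_n$'' fails as stated: each sample lying in the annulus $\{\,y:\ |\Vert z-y\Vert_d-\epsilon_n|\lesssim \epsilon_n^2\}$ may change its kernel value by $\approx \epsilon_n^{-k}\eta(1)$, and controlling only the ball counts $\#\{j:\Vert z-x_j\Vert_d\le 2\epsilon_n\}\lesssim n\epsilon_n^{k}$ (as in your sketch) gives $|d_n(z)-d_n(z')|=O(1)$, not $O(\epsilon_n)$. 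What is needed is a bound, uniform over the net, of order $n\epsilon_n^{k+1}$ on the number of samples in these width-$\epsilon_n^2$ annuli (another Bernstein-plus-union-bound step, with the even better exponent $n\epsilon_n^{k+1}$), or, more cleanly, exactly the monotone bracketing you mention at the end --- sandwiching the kernel between copies with slightly dilated and contracted bandwidths at the net points --- which handles discontinuous $\eta$ directly. In other words, the bracketing device is genuinely needed, but because of the possible discontinuity of $\eta$ at $1$, not because of the prefactor. A last cosmetic point: at $z=x_i\in V_n$ the diagonal term is omitted, which perturbs $d_n$ by $\Vert\eta\Vert_\infty\epsilon_n^{-k}/n$; this is $o(\epsilon_n)$ exactly when $n\epsilon_n^{k+1}\to\infty$, which holds in the regime where the probability bound is non-vacuous, so it is harmless, but it deserves a sentence rather than being subsumed under the atomlessness remark.
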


We also recall the following result, which may be easily derived from \cite[Theorem 2]{GarciaTrillos2020}.

\begin{theorem}\label{thm:transp_plan_existence}
Let $(M, g)$ be a $k$-dimensional closed Riemannian manifold. Let $\rho \in C^{\infty}(M)$, $\rho > 0$ such that $\nu := \rho \volm \in \mathcal{P}(M)$. Let $\{X_i\}_{i \in \mathbf{N}}$ be i.i.d.\ random points in $M$ distributed according to $\nu$ and let $\nu_n := \frac{1}{n}\sum_{i=1}^n \delta_{X_i}$ be the associated empirical measures. Then there is a constant $C > 0$ such that almost surely there exist transport maps $T_n$ such that $(T_n)_{\#} \nu = \nu_n$ and
\begin{align}\label{eq:cond_transp_plans}
\begin{cases}
\limsup_{n \to +\infty} \frac{n^{1/2}\sup_{x \in M} d_M(x, T_n(x))}{log^{3/4}(n)} \le C \quad \text{if}\ k = 2,
\\ \limsup_{n \to +\infty} \frac{n^{1/k}\sup_{x \in M} d_M(x, T_n(x))}{log^{1/k}(n)} \le C \quad \text{if}\ k \ge 3.
\end{cases}
\end{align}
\end{theorem}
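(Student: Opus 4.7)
The plan is to deduce the almost sure existence of the transport maps $T_n$ with the claimed uniform displacement bounds directly from the $\infty$-Wasserstein estimates for the empirical measure $\nu_n$ established in \cite[Theorem 2]{GarciaTrillos2020}, combined with the standard fact that an $\infty$-optimal coupling from an absolutely continuous source measure is induced by a map.

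First, I would recall the $\infty$-Wasserstein distance
\begin{equation}
W_\infty(\nu, \nu_n) := \inf_{\pi \in \Pi(\nu, \nu_n)} \inf \bigl\{ r > 0 : \pi(\{ (x,y) \in M \times M : d_M(x,y) > r \}) = 0\bigr\},
\end{equation}
and invoke \cite[Theorem 2]{GarciaTrillos2020}, which establishes precisely in our setting that almost surely
\begin{align}
\limsup_{n \to +\infty} \frac{n^{1/2} W_\infty(\nu, \nu_n)}{(\log n)^{3/4}} &\le C \quad \text{if}\ k = 2,\\
\limsup_{n \to +\infty} \frac{n^{1/k} W_\infty(\nu, \nu_n)}{(\log n)^{1/k}} &\le C \quad \text{if}\ k \ge 3.
\end{align}
These are exactly the rates appearing in \eqref{eq:cond_transp_plans}, so once the transport map version is in place, no further probabilistic estimates are needed.

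Second, I would upgrade couplings to transport maps. Since $\nu = \rho\,\volm$ with $\rho$ smooth and strictly positive on the compact manifold $M$, $\nu$ is absolutely continuous with full support. A standard adaptation to closed Riemannian manifolds of the Champion--De Pascale theorem on $\mathbf{R}^k$ then produces, for each $n$, a Borel map $T_n : M \to M$ with $(T_n)_{\#}\nu = \nu_n$ and $d_M(x, T_n(x)) \le W_\infty(\nu, \nu_n)$ for $\nu$-almost every $x$. Because $\rho$ is bounded below by a positive constant and $M$ is compact, the $\nu$-essential supremum of $d_M(x, T_n(x))$ over $M$ coincides with the actual supremum (after possibly redefining $T_n$ on a $\nu$-null set), giving $\sup_{x \in M} d_M(x, T_n(x)) \le W_\infty(\nu, \nu_n)$. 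Combining this with the $W_\infty$ rates above yields \eqref{eq:cond_transp_plans}.

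The main subtlety I anticipate is the passage from an $\infty$-optimal coupling to an $\infty$-optimal \emph{map} on a manifold in the second step: on $\mathbf{R}^k$ this is a theorem of Champion--De Pascale, but the extension to a compact Riemannian manifold, while straightforward in spirit, is not always spelled out explicitly in the literature. A cleaner alternative, which bypasses $\infty$-optimal transport theory altogether, is to read off $T_n$ directly from the construction in \cite{GarciaTrillos2020}: the proof there proceeds via an explicit dyadic matching between the sample points $X_i$ and equal-$\nu$-mass regions of $M$, and this matching already defines a Borel map $T_n$ with $(T_n)_{\#}\nu = \nu_n$ satisfying the uniform displacement bound stated in \eqref{eq:cond_transp_plans}. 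This reduces the argument to quoting the construction carried out in \cite{GarciaTrillos2020} rather than invoking any abstract regularity theory for $\infty$-optimal transport.
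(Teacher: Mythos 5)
Your proposal is essentially correct, and your ``cleaner alternative'' at the end is in fact exactly what the paper does: the theorem is stated in the appendix as a recalled result that ``may be easily derived from \cite[Theorem 2]{GarciaTrillos2020}'', and that cited theorem already produces the transport maps $T_n$ themselves (with high-probability bounds on $\sup_x d_M(x,T_n(x))$ at precisely these rates), so that the almost-sure statement follows by Borel--Cantelli; no passage through $W_\infty$ is needed. Your first route -- quoting only the $W_\infty$ rates and then upgrading an $\infty$-optimal coupling to a map via a manifold version of Champion--De Pascale -- is a workable but needlessly heavy detour. Since the target measure $\nu_n$ is purely atomic and the source $\nu$ is non-atomic, \emph{any} coupling $\pi$ with $\infty$-cost $r$ can be converted into a map with displacement at most $r$ by a measurable marriage/exhaustion argument (partition $M$ into sets of $\nu$-mass $1/n$ subordinate to the disintegration of $\pi$), so no regularity theory for $\infty$-optimal transport is required; and your sup-versus-esssup repair is fine but should be phrased as redefining $T_n$ on the null set by sending $x$ to an atom $X_i$ that is the value of $T_n$ along a sequence $x_j\to x$ outside the null set, which preserves the bound by continuity of the distance. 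In short: prefer your second reduction, which matches the paper, and if you keep the first, replace the appeal to Champion--De Pascale by the elementary atomic-target argument.
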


\section*{Acknowledgements}

This project has received funding from the Deutsche Forschungsgemeinschaft (DFG, German Research Foundation) under Germany's Excellence Strategy -- EXC-2047/1 -- 390685813.

\bibliography{bib_first_draft}{}
\bibliographystyle{plain}

\end{document}